\newtheorem{prop}{Proposition}[section]
\newtheorem*{defi*}{Definition}
\newtheorem{defi}[prop]{Definition}
\newtheorem{lem}[prop]{Lemma}
\newtheorem{rem}[prop]{Remark}
\newtheorem{thm}[prop]{Theorem}
\newtheorem{coro}[prop]{Corollary}
\numberwithin{equation}{section}
\begin{document}

\vspace*{-30mm}

\centerline{To appear in {\it Transactions of the American Mathematical Society}}

\vspace{20mm}

\noindent
{\Large\bf  Quasimartingales associated to Markov processes}\\

\noindent
Lucian Beznea\footnote{Simion Stoilow Institute of Mathematics  of the Romanian Academy,
Research unit No. 2, P.O. Box  1-764, RO-014700 Bucharest, Romania,
and University of Bucharest, Faculty of Mathematics and Computer Science
(e-mail: lucian.beznea@imar.ro)} and   
Iulian C\^{i}mpean\footnote{Simion Stoilow Institute of Mathematics  of the Romanian Academy,
Research unit No. 2, P.O. Box  1-764, RO-014700 Bucharest, Romania,
(e-mail: iulian.cimpean@imar.ro)}
\\[5mm]

\noindent
{\bf Abstract.}
For a fixed right process $X$ we investigate those functions $u$ for which $u(X)$ is a quasimartingale. 
We prove that $u(X)$ is a quasimartingale if and only if $u$ is the difference of two finite excessive functions.
In particular, we show that the quasimartingale nature of $u$ is preserved under killing, time change, or Bochner subordination.
The study relies on an analytic reformulation of the quasimartingale property for $u(X)$ in terms of a certain variation of $u$ with respect to the transition function of the process. 
We provide sufficient conditions under which $u(X)$ is a quasimartingale, and finally, we extend to the case of semi-Dirichlet forms a semimartingale characterization of such functionals for symmetric Markov processes, due to Fukushima. \\[2mm]

\noindent
{\bf Keywords.} Semimartingale, quasimartingale, Markov process, excessive function, Dirichlet form, Fukushima decomposition, smooth measure. \\

\noindent
{\bf Mathematics Subject Classification
(2010).} 60J45, 31C25, 60J40 (primary), 60J25, 60J35, 60J55, 60J57, 31C05 (secondary)


\section{Introduction}

Let us consider a (right) Markov process 
$X = (\Omega, \mathcal{F}, \mathcal{F}_t, X_t, \mathbb{P}^x)$ with state space $E$. 
In the celebrated paper \cite{CiJaPrSh80}, the authors prove that a real-valued function $u$ on $E$ 
has the property that $u(X)$ is a semimartingale for each $\mathbb{P}^x$ 
if and only if there exists a sequence of finely open sets $(E_n)_{n \geq 1}$ 
such that $\bigcup_n E_n = E$, the exit times $T_n$ of $E_n$ tend to infinity a.s., 
and $u$ is the difference of two $1$-excessive functions on each $E_n$.
This characterization was later approached by Fukushima in \cite{Fu99} from a Dirichlet forms theory perspective.
More precisely, he showed that if $X$ is associated with a symmetric Dirichlet form 
$(\mathcal{E}, \mathcal{F})$ and $u \in \mathcal{F}$, then $\widetilde{u}(X)$ 
is a semimartingale if and only if there exist a nest $(F_n)_{n\geq 1}$ 
and constants $(c_n)_{n\geq 1}$ such that for each $n\geq 1$
 
\begin{equation} \label{lineq}
|\mathcal{E}(u,v)| \leq c_n \|v\|_\infty \; \; \mbox{for all} \; v \in \mathcal{F}_{b, F_n},
\end{equation}
here $\widetilde{u}$ denotes a quasi-continuous version of $u$.
The ideea of Fukushima in order to prove the sufficiency of inequality (\ref{lineq}) 
was to assume first that $\mathcal{E}$ is a regular Dirichlet form so that, 
by Riesz representation, one has $\mathcal{E}(u,v)=\nu(v)$ for some Radon measure $\nu$ on $E$.
The next step was to show that $\nu$ is a smooth measure, which means that the CAF 
from Fukushima decomposition is of bounded variation, hence $\widetilde{u}(X)$ is a semimartingale.
The extension to quasi-regular symmetric Dirichlet forms was achieved via the so called "transfer method".
This result was then used by the author in order to develop a deep stochastic counterpart of BV functions in both finite and infinite dimensions; beside the above mentioned paper, 
we refer the reader also to \cite{Fu00} and the references therein.
As a matter of fact, the approach using Dirichlet forms dates 
back to the work of Bass and Hsu in \cite{BaHs90} 
where they showed that the reflected Brownian motion 
in a Lipschitz domain is a semimartingale, result which was later 
extended to (strong) Caccioppoli sets in \cite{ChFiWi93}, 
where the authors investigate the quasimartingale structure of the process.
It is worth to mention that in \cite{ChFiWi93} 
the authors consider quasimartingales only on finite intervals and not on the entire positive semi-axis, 
as we do (see Definition \ref{defi 2.1}).
Although it might seem a small difference, it is in fact the key point which makes our hole study achievable and, 
to the best of our knowledge, new. 

The aim of this paper is twofold: firstly, we investigate those real-valued functions $u$ on $E$ 
for which $u(X)$ is a quasimartingale, and
secondly, we study those functions $u$ for which $u(X)$ is a semimartingale 
by looking at their local quasimartingale structure.
We briefly present below the structure and the main results of the paper:

In Section 2 we show that the quasimartingale property of $u(X)$ may be reformulated in terms of the variation 
$$
V(u):= \mathop{\sup}\limits_{\tau}\{\mathop{\sum}\limits_{i=1}^{n} P_{t_{i-1}} |u - P_{t_i - t_{i-1}}u| + P_{t_n}|u|\}
$$
of $u$ w.r.t. the semigroup $(P_t)_{t \geq 0}$ of the process, 
which allows us to perform the study from a purely analytic point of view. 
The central results are Theorem \ref{thm 2.6} mainly saying that 
$\{ x \in E \; : \; u(X) \; \mbox{is a quasimartingale w.r.t.} \; \mathbb{P}^x \} = \{V(u)< \infty\}$, 
and Corollary \ref{coro 2.7} according to which $u(X)$ 
is a quasimartingale (which by convention means for all $\mathbb{P}^x, x \in E$) 
if and only if $u$ may be decomposed as the difference of two finite excessive functions.
In particular, if the process is irreducible and 
$(e^{-\alpha t}u(X_t))_{t \geq 0}$ is a $\mathbb{P}^{x_0}$-quasimartingale 
for one $x_0 \in E$, then it is a $\mathbb{P}^{x}$-quasimartingale for all $x \in E$.
A Riesz type decomposition and some remarks on the space of 
differences of excessive functions are discussed in the end of the section.

In Section 3 we show that the quasimartingale property of functions 
is preserved under killing, time change, and Bochner subordination.
In addition, we show that for a multiplicative functional $M$ with permanent points $E_M$, $(e^{-\alpha t}M_t u(X_t))_t$ is a quasimartingale if and only if $(e^{-\alpha t}u|_{E_M}(X^M))_t$ is a quasimartingale, where $X^M$ stands for the killed process by $M$; see Corollary \ref{coro 3.3}.
Also, in Proposition \ref{prop 3.5} we show that if $(e^{-\alpha t}u(X_t))_t$ is a quasimartingale, then so is the process $(e^{-\alpha \tau_t}u(Y_t))_t$, where $\tau$ is the inverse of an additive functional of $X$ and $Y$ denotes the corresponding time change process. 

In Section 4 we provide tractable conditions for $u$ such that $(e^{-\alpha t}u(X_t))_t$ is a quasimartingale.
We distinguish two ways of considering such conditions, which we treat separately: 
the first one involves the resolvent $\mathcal{U}=(U_\alpha)_{\alpha}$ of the process, while the second approach is performed in an $L^p(\mu)$-context, where $\mu$ is a $\sigma$-finite sub-invariant measure.
On brief, the key point is to search for an estimate of the type $U_\alpha(|P_tu - u|) \lesssim t$ 
for the first approach, and of the type $\mu(|P_tu-u|f)\lesssim t \|f\|_\infty$ in the $L^p$-context, 
but we refer the reader to Propositions \ref{prop 4.1} and \ref{prop 4.2} for the precise statemens; see also Proposition \ref{prop 4.5} for a condition in terms of the dual generator on $L^p$-spaces.

In the last section we look at quasimartingale and semimartingale functionals from the Dirichlet form theory point of view.
More precisely, if $(\mathcal{E}, \mathcal{F})$ is a (non-symmetric) Dirichlet form, 
then for an element $u \in \mathcal{F}$, an inequality of the type

\begin{equation} \label{ineq}
|\mathcal{E}(u,v)| \leq c \|v\|_\infty \; \; \mbox{for all}\; v \in \mathcal{F}_b
\end{equation}
\noindent
ensures that $(e^{-\alpha t}\widetilde{u}(X))_t$ is a quasimartingale; see Theorem \ref{thm 5.2}.
As a matter of fact, we show that this is true under a more general situation, 
when $\|v\|_\infty$ in (\ref{ineq}) is replaced by $\|v\|_\infty + \|v\|_{L^2(\mu)}$, cf. Theorem \ref{thm 5.1}.  
Then, in Theorem \ref{thm 5.3} we extend the semimartingale characterization 
due to Fukushima mentioned in the beginning of the introduction, to non-symmetric Dirichlet forms.
Furthermore, in Corollary \ref{coro 5.4} we consider the situation when $u$ 
is not necessarily in $\mathcal{F}$ (e.g. $u \in \mathcal{F}_{\rm loc}$), 
under the additional hypothesis that the form has the local property.
At this point we would like to emphasize that in contrast with previous work, in order to prove the sufficiency of conditions (\ref{lineq}) or (\ref{ineq}) we do not use Fukushima decomposition or Revuz correspondence. 
Instead, we employ heavily the results of the previous sections, and in fact, 
this approach enables us to extend Theorem \ref{thm  5.3} to semi-Dirichlet forms 
without further conditions; we do this in Theorem \ref{thm 5.5}.  

The paper ends with a few remarks concerning situations when it is sufficient to check inequalities 
(\ref{lineq}) or (\ref{ineq}) for $v$ belonging to a proper subspace of $\mathcal{F}$, like cores or special standard cores.

\section{Quasimartingales of Markov processes}
Before considering Markov processes, let us recall some classic facts about 
quasimartingales defined on a general probability space.
 
\begin{defi} \label{defi 2.1}
Let $(\Omega, \mathcal{F}, \mathcal{F}_t, \mathbb{P})$ 
be a filtered probability space satisfying the usual hypotheses. 
An $\mathcal{F}_t$-adapted, right-continuous integrable process $(Z_t)_{t \geq 0}$ is called $\mathbb{P}$-{\rm quasimartingale} if
$$
{Var}^\mathbb{P}(Z):= \mathop{\sup}\limits_{\tau} \mathbb{E} \{ \mathop{\sum}\limits_{i = 1}^{n} |\mathbb{E}[Z_{t_i} - Z_{t_{i-1}}|\mathcal{F}_{t_{i-1}}]| + |Z_{t_n}|\} < \infty,
$$
where the supremum is taken over all partitions $\tau : 0 = t_0 \leq t_1 \leq \ldots \leq t_n < \infty$.
\end{defi}

A classic result is Rao's theorem according to which any 
quasimartingale has a unique decomposition as a sum between 
a local martingale and a predictable process with paths of locally integrable variation. 
In fact, the following characterization inspired our work (see e.g. \cite{Pr05}, page 117):

\begin{thm} \label{thm 2.2} 
(Rao) Let $(\Omega, \mathcal{F}, \mathcal{F}_t, \mathbb{P})$ as in Definition \ref{defi 2.1}. 
A real-valued process is a $\mathbb{P}$-quasimartingale 
if and only if it is the difference of two positive right-continuous $\mathcal{F}_t$-adapted supermartingales.
\end{thm}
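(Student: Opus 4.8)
The plan is to prove the two implications separately; the implication starting from the decomposition is elementary, while the converse is the substance of Rao's theorem.

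For the easy direction, suppose $Z = Y - W$ with $Y, W$ positive right-continuous $\mathcal{F}_t$-adapted supermartingales. Then $Z$ is adapted, right-continuous and integrable, and for any partition $\tau: 0 = t_0 \leq \dots \leq t_n$ I would bound $|\mathbb{E}[Z_{t_i} - Z_{t_{i-1}} \mid \mathcal{F}_{t_{i-1}}]| \leq |\mathbb{E}[Y_{t_i} - Y_{t_{i-1}} \mid \mathcal{F}_{t_{i-1}}]| + |\mathbb{E}[W_{t_i} - W_{t_{i-1}} \mid \mathcal{F}_{t_{i-1}}]|$ and $|Z_{t_n}| \leq Y_{t_n} + W_{t_n}$. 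Because $Y$ is a positive supermartingale its conditional increments are nonpositive, so in expectation the $Y$-terms telescope to $\mathbb{E}[Y_0] - \mathbb{E}[Y_{t_n}]$, and adding $\mathbb{E}[Y_{t_n}]$ returns exactly $\mathbb{E}[Y_0]$; the same holds for $W$. Hence ${Var}^{\mathbb{P}}(Z) \leq \mathbb{E}[Y_0] + \mathbb{E}[W_0] < \infty$.

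For the converse, let $Z$ be a quasimartingale. Using the two-point partition one first reads off $\sup_t \mathbb{E}|Z_t| \leq {Var}^{\mathbb{P}}(Z) < \infty$. For $t \geq 0$ and a partition $\tau: t = t_0 < \dots < t_n$ of $[t,\infty)$ I would set
$$
Y_t^\tau := \sum_{i} \mathbb{E}\bigl[g_i^+ \mid \mathcal{F}_t\bigr], \qquad W_t^\tau := \sum_{i} \mathbb{E}\bigl[g_i^- \mid \mathcal{F}_t\bigr], \qquad g_i := \mathbb{E}\bigl[Z_{t_{i+1}} - Z_{t_i} \mid \mathcal{F}_{t_i}\bigr].
$$
The crucial monotonicity is that refining $\tau$ cannot decrease $Y_t^\tau$ or $W_t^\tau$: splitting an interval $[t_i, t_{i+1}]$ and combining $(a+b)^+ \leq a^+ + b^+$ with the conditional Jensen inequality $(\mathbb{E}[\,\cdot\mid\mathcal{G}])^+ \leq \mathbb{E}[(\cdot)^+\mid\mathcal{G}]$ gives the claim. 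Since $\mathbb{E}[Y_t^\tau] = \mathbb{E}[\sum_i g_i^+] \leq {Var}^{\mathbb{P}}(Z)$, the limits $Y_t := \lim_\tau Y_t^\tau$ and $W_t := \lim_\tau W_t^\tau$ (monotone along refinements and along $t_n \uparrow \infty$) exist in $L^1$ by monotone convergence.

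A telescoping computation gives $Y_t^\tau - W_t^\tau = \mathbb{E}[Z_{t_n} \mid \mathcal{F}_t] - Z_t$, so in the limit $M_t := Z_t + Y_t - W_t = \lim_{s\to\infty} \mathbb{E}[Z_s \mid \mathcal{F}_t]$ defines, by the tower property, an $L^1$-bounded martingale (since $\mathbb{E}|M_t| \leq \mathbb{E}|Z_t| + \mathbb{E}[Y_0] + \mathbb{E}[W_0]$). I would then verify that $Y$ and $W$ are positive supermartingales: positivity is immediate, and the supermartingale inequality follows because a partition of $[t',\infty)$ is part of a partition of $[t,\infty)$, whence $\mathbb{E}[Y_{t'}^{\tau'} \mid \mathcal{F}_t] \leq Y_t$ in the limit, and similarly for $W$. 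To secure positivity of \emph{both} final pieces I would invoke the Krickeberg decomposition of the $L^1$-bounded martingale, $M = M' - M''$ with $M'_t = \lim_s \mathbb{E}[M_s^+ \mid \mathcal{F}_t]$ and $M''_t = \lim_s \mathbb{E}[M_s^- \mid \mathcal{F}_t]$ nonnegative martingales; then $Z = (M' + W) - (M'' + Y)$ exhibits $Z$ as a difference of the positive supermartingales $M' + W$ and $M'' + Y$. The only remaining technical point is right-continuity of $Y$ and $W$, which under the usual hypotheses follows once $t \mapsto \mathbb{E}[Y_t]$ is right-continuous, a regularity I would derive from the right-continuity and $L^1$-boundedness of $Z$. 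I expect the genuine obstacle to be the passage $t_n \uparrow \infty$ on the whole half-line and the identification of the tail martingale $M$: this is exactly where Definition \ref{defi 2.1}, with its extra term $|Z_{t_n}|$ and its supremum over partitions of all of $[0,\infty)$ rather than of a bounded interval, becomes indispensable, since it is that term which controls $\sup_t \mathbb{E}|Z_t|$ and guarantees that the limiting objects are bona fide positive supermartingales on the entire positive semi-axis.
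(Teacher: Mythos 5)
The paper does not prove Theorem \ref{thm 2.2}: it is quoted as Rao's classical theorem with a pointer to \cite{Pr05}, p.~117, so there is no in-paper argument to compare against. Measured against the standard proof that the citation refers to, your argument is essentially correct. The easy direction (telescoping the conditional increments of each positive supermartingale and absorbing the terminal term $|Z_{t_n}|$ into $\mathbb{E}[Y_0]+\mathbb{E}[W_0]$) is exactly the usual one. In the converse you follow Rao's scheme --- approximants built from the positive and negative parts of the conditional increments, monotonicity under refinement via $(a+b)^+\leq a^++b^+$ and conditional Jensen, $L^1$-bounds from ${Var}^{\mathbb{P}}(Z)$ --- but with one structural deviation: the textbook proof puts the terminal terms $\mathbb{E}[Z_{t_n}^{\pm}\mid\mathcal{F}_t]$ \emph{inside} $Y_t^{\tau}$ and $W_t^{\tau}$, so that $Y_t^{\tau}-W_t^{\tau}=Z_t$ identically and no separate treatment of a tail martingale is needed, whereas you keep them out, identify the tail martingale $M_t=\lim_s\mathbb{E}[Z_s\mid\mathcal{F}_t]$, and split it by the Krickeberg decomposition. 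Your route works (the limit defining $M$ does exist in $L^1$, precisely because $Y^{\tau}-W^{\tau}$ converges), but the Krickeberg step is an avoidable detour. The one place where your sketch is slightly off is the regularity step at the end: for the raw limits $Y,W$ the map $t\mapsto\mathbb{E}[Y_t]$ need not be right-continuous, and one does not try to prove that it is. The standard fix is to replace $Y$ and $W$ by their right-limit processes along the rationals, which under the usual hypotheses are again positive supermartingales, and to observe that the difference is unchanged because $Y-W=M-Z$ is already right-continuous once $M$ is taken in its right-continuous version. With that correction your proof is a complete and faithful rendition of the cited result.
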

\noindent
Conversely, one can show that any semimartingale with bounded jumps is locally a quasimartingale.

Hereinafter we consider a right Markov process $X = (\Omega, \mathcal{F}, \mathcal{F}_t, X_t, \mathbb{P}^x)$ 
with state space a Lusin topological space $E$ endowed with the Borel $\sigma$-algebra $\mathcal{B}$, 
transition function $(P_t)_{t \geq 0}$ and resolvent $\mathcal{U} = (U_{\alpha})_{\alpha > 0}$. 
If $X$ has lifetime $\xi$ and cemetry point $\Delta$, 
we make the convention $u(\Delta) = 0$ for all functions $u: E \to [-\infty, + \infty]$.

The aim of this section is to study those functions 
$u : E \to \mathbb{R}$ for which $u(X)$ is a $\mathbb{P}^x$-quasimartingale for all $x \in E$.

\begin{defi} \label{defi 2.3}
Let $\alpha \geq 0$. A real valued $\mathcal{B}$-measurable function $u$ 
is called $\alpha$-{\rm quasimartingale function} for 
$X$ if $(e^{-\alpha t}u(X_t))_{t \geq 0}$ is a $\mathbb{P}^x$-quasimartingale for all $x \in E$.
When $\alpha = 0$ we shall drop the index from notations.
\end{defi}

\begin{rem} \label{rem 2.4} If $u$ is a quasimartingale function then,
$\mathop{\sup}\limits_{t} P_t|u|(x) = \mathop{\sup}\limits_{t}\mathbb{E}^x|u(X_t)| \leq {Var}^{\mathbb{P}^x}(u(X)) < \infty, \; x \in E.$
Also, by the $\mathbb{P}^x$-a.s. right continuity of the trajectories $t \mapsto u(X_t)$, $u$ must be finely continuous; see \cite{BlGe68}, Theorem 4.8.
\end{rem}

\noindent
{\bf Notations.} For a real valued function $u$ and a partition $\tau$ of $\mathbb{R}^+$, $\tau : 0 = t_0 \leq t_1 \leq \ldots \leq t_n < \infty$, we set

\qquad \qquad \qquad \qquad $V_{\tau}^{(P_t)}(u) := \mathop{\sum}\limits_{i=1}^{n} P_{t_{i-1}} |u - P_{t_i - t_{i-1}}u| + P_{t_n}|u|$,

\qquad \qquad \qquad \qquad $V^{(P_t)}(u) := \mathop{\sup}\limits_{\tau}V_{\tau}^{(P_t)}(u)$,

\noindent
where the supremum is taken over all finite partitions of $\mathbb{R}_+$. 
If there is no risk of confusion we shall write $V_{\tau}(u)$ and 
$V(u)$ instead of $V_{\tau}^{(P_t)}(u)$ resp. $V^{(P_t)}(u)$. 
Also, for $\alpha > 0$ we set 
$V_{\tau}^{\alpha}(u) := V_{\tau}^{(P_t^{\alpha})}(u)$ and 
$V^{\alpha}(u) := V^{(P_t^{\alpha})}(u)$, where $P_t^\alpha :=e^{-\alpha t}P_t, \alpha > 0$.

Recall that for $\alpha \geq 0$, a $\mathcal{B}$-measurable function 
$f:E \rightarrow [0, \infty]$ is called {\it $\alpha$-supermedian} if $P_t^\alpha f \leq f$, $t \geq 0$. 
If $f$ is $\alpha$-supermedian and $\lim\limits_{t \to 0} P_t^\alpha f = f$ then it is called {\it $\alpha$-excessive}.
The convex cone of all $\alpha$-supermedian (resp. $\alpha$-excessive) functions is denoted by 
$S(\mathcal{U}^\alpha)$ (resp. $E(\mathcal{U}^\alpha)$).
If $\alpha = 0$ we shall drop the index $\alpha$ from notations.

A set $A \in \mathcal{B}$ is called {\it absorbing} if $R_\alpha^{E\setminus A} 1 = 0$ on $A$, 
where $R_\alpha^A 1 := inf \{ s \in E(\mathcal{U}^\alpha) : s \geq 1_A \}$.
We recall that if $A$ is absorbing then it is finely open, $U_\alpha 1_{E\setminus A} = 0$ on $A$, 
and the restriction of $X$ to $A$ is again a right process; see e.g. \cite{Sh88} or \cite{BeRo11}. 
Standard examples of absorbing sets are $[v=0]$ and $[v<\infty]$ 
for some $v\in E(\mathcal{U}^\alpha)$ and $\alpha \geq 0$. 

\begin{defi} \label{defi 2.5} 
A sequence $(\tau_n)_{n \geq 1}$ of finite partitions of $\mathbb{R}_+$ is called 
{\rm admissible} if it is increasing, $\mathop{\bigcup}\limits_{k \geq 1}\tau_k$ is dense in $\mathbb{R}_+$, 
and if $r \in \mathop{\bigcup}\limits_{k \geq 1}\tau_k$ then $r + \tau_n \subset \mathop{\bigcup}\limits_{k \geq 1}\tau_k$ for all $n \geq 1$.
\end{defi}

The next theorem and its first corollary are the main results of this section.

\begin{thm} \label{thm 2.6}
Let $u$ be a real valued $\mathcal{B}$-measurable function such that $P_t|u| < \infty$ for all $t$. 
Then the following assertions hold.

\vspace{0.2cm}
i) ${Var}^{\mathbb{P}^x}(u(X)) = V(u)(x), \; x \in E$.

\vspace{0.2cm}
ii) If $u_1, u_2 \in S(\mathcal{U})$ s.t. $u = u_1 - u_2$ 
on the set $[u_1 + u_2 < \infty]$ then $V(u) \leq u_1 + u_2$ on $[u_1+ u_2 < \infty]$.

\vspace{0.2cm}
iii) If $u$ is finely continuous, then there exist $u_1, u_2 \in E(\mathcal{U})$ such that 
$[V(u) < \infty] = [u_1 + u_2 < \infty]$ and $u = u_1 - u_2$ on $[V(u) < \infty]$. 
In this case, the set $[V(u) < \infty]$ is absorbing and 
$[V(u) < \infty] = [\mathop{\sup}\limits_{n}V_{\tau_n}(u) < \infty] = [\lim\limits_n V_{\tau_n}(u) < \infty]$ 
for any admissible sequence of partitions $(\tau_n)_n$.
\end{thm}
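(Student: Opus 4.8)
The overall strategy is to reduce the probabilistic variation to the analytic quantity $V(u)$ by the Markov property (part i), and then to exploit the telescoping structure of $V_\tau$ together with supermedian inequalities (parts ii and iii). For (i), the plan is a direct computation: by the Markov property, for a partition $\tau : 0 = t_0 \le \cdots \le t_n$ one has $\mathbb{E}^x[u(X_{t_i}) - u(X_{t_{i-1}}) \mid \mathcal{F}_{t_{i-1}}] = (P_{t_i - t_{i-1}}u - u)(X_{t_{i-1}})$, so that taking absolute values \emph{inside} the expectation and using $\mathbb{E}^x[\,g(X_{t_{i-1}})\,] = P_{t_{i-1}}g(x)$ yields $\mathbb{E}^x\big|\mathbb{E}^x[\cdots \mid \mathcal{F}_{t_{i-1}}]\big| = P_{t_{i-1}}|u - P_{t_i - t_{i-1}}u|(x)$, while $\mathbb{E}^x|u(X_{t_n})| = P_{t_n}|u|(x)$. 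Summing and taking the supremum over $\tau$ identifies ${Var}^{\mathbb{P}^x}(u(X))$ with $V(u)(x)$; the only point requiring care is that the absolute value is applied to the function before $P_{t_{i-1}}$ acts, which is exactly the definition of $V_\tau$.

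For (ii), I would use that $S(\mathcal{U})$ is a convex cone, so that $u_1 - P_s u_1 \ge 0$ and $u_2 - P_s u_2 \ge 0$ for $s \ge 0$. On the absorbing set $[u_1 + u_2 < \infty]$, where the decomposition $u = u_1 - u_2$ propagates under $P_s$, the triangle inequality gives $|u - P_s u| \le (u_1 - P_s u_1) + (u_2 - P_s u_2)$. Applying $P_{t_{i-1}}$ and summing over $i$ makes the right-hand side telescope to $(u_1 - P_{t_n}u_1) + (u_2 - P_{t_n}u_2)$; adding the boundary term $P_{t_n}|u| \le P_{t_n}(u_1 + u_2)$ produces $V_\tau(u) \le u_1 + u_2$, and the supremum over $\tau$ gives the claim.

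Part (iii) is where the real work lies. I would first record two structural properties of $V_\tau$: it is monotone under refinement (a consequence of the contraction $|P_h g| \le P_h|g|$ and the triangle inequality), and it satisfies the shift identity $V_{\tau'}(u) = |u - P_s u| + P_s V_\tau(u)$, where $\tau'$ is obtained from $\tau$ by shifting all nodes by $s$ and prepending $0$. Granting these, the density and shift-stability built into the notion of an admissible sequence $(\tau_n)$ make that countable family cofinal for the supremum defining $V$, so that $V(u) = \sup_n V_{\tau_n}(u) = \lim_n V_{\tau_n}(u)$; here I would lean on part (i) and the right-continuity of $t \mapsto u(X_t)$ (guaranteed by the fine continuity of $u$) to argue that the full variation can be read off along any dense sequence of partition times, a standard fact for c\`adl\`ag processes. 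The countable increasing representation then lets me pass $P_s$ through the limit by monotone convergence: combining the shift identity with $V_{\tau'}(u) \le V(u)$ gives $|u - P_s u| + P_s V(u) \le V(u)$, whence $V(u) - P_s V(u) \ge |u - P_s u| \ge \pm(u - P_s u)$. This shows at once that $V(u)$, $V(u)+u$, and $V(u)-u$ are all supermedian; since the trivial partition already yields $V(u) \ge |u|$, the functions $u_1 := \tfrac12(V(u)+u)$ and $u_2 := \tfrac12(V(u)-u)$ are nonnegative supermedian with $u_1 - u_2 = u$ and $u_1 + u_2 = V(u)$, so $[u_1 + u_2 < \infty] = [V(u) < \infty]$.

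Finally I would upgrade $u_1, u_2$ to excessive functions by passing to the regularizations $\hat u_i = \lim_{t\downarrow 0}P_t u_i \in E(\mathcal{U})$, using the fine continuity of $u$ together with the domination $P_t|u| \le V(u) < \infty$ to conclude $\lim_{t\downarrow 0}P_t u = u$ on $[V(u)<\infty]$, hence $\hat u_1 - \hat u_2 = \lim_{t\downarrow 0}(P_t u_1 - P_t u_2) = u$ there. The set $[V(u)<\infty]$ is absorbing as the finiteness set of the supermedian function $V(u) = u_1 + u_2$, and this forces $[\hat u_1 + \hat u_2 < \infty] = [V(u)<\infty]$; the remaining chain $[V(u)<\infty] = [\sup_n V_{\tau_n}(u)<\infty] = [\lim_n V_{\tau_n}(u)<\infty]$ is then immediate from $V(u) = \sup_n V_{\tau_n}(u) = \lim_n V_{\tau_n}(u)$. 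I expect the main obstacle to be the interchange of $P_s$ with the supremum defining $V(u)$, that is, establishing that an admissible (hence countable) sequence of partitions is genuinely cofinal for the variation; this is precisely what forces the admissibility hypotheses and the appeal to fine continuity through part (i). The subsequent excessive regularization, although delicate, is routine potential theory once the supermedian property and the a.s. right-continuity of $u(X)$ are secured.
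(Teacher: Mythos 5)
Parts i) and ii) of your proposal are correct and coincide with the paper's argument (for ii) the point you flag in passing --- that the decomposition ``propagates under $P_s$'' on $[u_1+u_2<\infty]$ --- is exactly what the paper proves first, via $P_t1_{A^c}=0$ on $A=[u_1+u_2<\infty]$). The genuine gap is in part iii), in the cofinality claim $V(u)=\sup_nV_{\tau_n}(u)$, which you present as ``a standard fact for c\`adl\`ag processes.'' To approximate $V_\tau(u)(x)$ by partitions from $D=\bigcup_n\tau_n$ you must let $s_i\downarrow t_i$ in $\mathbb{E}^x\bigl|u(X_{s_{i-1}})-\mathbb{E}^x[u(X_{s_i})\mid\mathcal{F}_{s_{i-1}}]\bigr|$. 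Fine continuity gives a.s.\ convergence of $u(X_{s_i})$, but convergence of the conditional expectations requires $L^1$-convergence, i.e.\ uniform integrability of $\{u(X_t)\}$ under $\mathbb{P}^x$, and neither the standing hypothesis $P_t|u|<\infty$ nor even $V(u)(x)<\infty$ provides this a priori; it becomes available only after one knows $u=u_1-u_2$ with $u_1,u_2$ excessive and finite, which is the conclusion. Since your derivation of the supermedian inequality $|u-P_su|+P_sV(u)\le V(u)$ for \emph{arbitrary} $s\ge0$ (rather than only $s\in D$, where admissibility lets the shifted partition be absorbed into some $\tau_m$) rests on this identity, the rest of part iii) hangs on an unproved and circular-looking step.

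The paper is organized precisely to avoid cofinality. It builds $u_1^{\tau},u_2^{\tau}$ from positive and negative parts, proves monotonicity under refinement, obtains $P_ru_i\le u_i$ only for $r\in\bigcup_n\tau_n$, and then upgrades this to genuine excessivity by a separate argument: Lemma \ref{lem 2.8} to get fine continuity of the truncated approximants $\varphi^n_{k,l}$, hence fine lower semicontinuity of $u_2$, an approximation of arbitrary $t$ by partition times $t_j\searrow t$, and a reduction of general $u$ to the lower-bounded case via $u^{\pm}$. Only then does it invoke part ii) to conclude $V(u)\le u_1+u_2=\sup_nV_{\tau_n}(u)$ on $[\sup_nV_{\tau_n}(u)<\infty]$, which gives the asserted equality of finiteness sets without ever computing the supremum of $V_\tau(u)$ over all partitions. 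Note also that the theorem only claims equality of the sets $[V(u)<\infty]$ and $[\sup_nV_{\tau_n}(u)<\infty]$, not of the functions. Your closing ``routine potential theory'' regularization conceals the same difficulties: extending $P_rw\le w$ from partition times to all times, and identifying $\lim_{t\downarrow0}P_tu$ with $u$, both need the fine-continuity and truncation machinery above rather than a one-line appeal to excessive regularization.
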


One of the fundamental connections between potential theory and Markov processes is the relation between excessive functions and (right-continuous) supermartingales. 
More precisely, it is well known that for a non-negative real-valued measurable function $u$ we have that $u(X)$ is a supermartingale if and only if $u$ is excessive; see e.g. \cite{LG06}, Proposition 13.7.1 and Theorem 14.7.1. The following essential consequence of Theorem \ref{thm 2.6} (and its proof), states that this connection may be extended between the space of differences of excessive function on the one hand, and quasimartingales on the other hand, in the same time revealing a Hahn-Jordan type decomposition. 

\begin{coro} \label{coro 2.7} 
A real valued $\mathcal{B}$-measurable function $u$ is a quasimartingale function for $X$ 
if and only if there exist two real-valued functions $u_1, u_2 \in E(\mathcal{U})$ such that $u = u_1 - u_2$; in this case one can take $u_1:=\mathop{\sup}\limits_{n}V_{\tau_n}(u)$, where $(\tau_n)_{n \geq 1}$ is any fixed sequence of admissible partitions of $\mathbb{R}_+$.
\end{coro}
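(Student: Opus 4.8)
The plan is to read off the corollary directly from Theorem \ref{thm 2.6} and Remark \ref{rem 2.4}, the point being that, by part (i), the identity $\mathrm{Var}^{\mathbb{P}^x}(u(X)) = V(u)(x)$ turns the statement ``$u(X)$ is a $\mathbb{P}^x$-quasimartingale for every $x$'' into the purely analytic condition $[V(u) < \infty] = E$. Before splitting into the two implications I would record the two regularity facts needed to match Definition \ref{defi 2.1}: integrability is automatic, since testing against the two-point partition $\{0,t\}$ gives $P_t|u| \le V_{\{0,t\}}(u) \le V(u)$, so $\sup_t P_t|u| \le V(u)$; and right-continuity of $t \mapsto u(X_t)$ will come for free from the excessive-difference structure, because each excessive $u_i$ yields a $\mathbb{P}^x$-a.s. right-continuous path $u_i(X)$.

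For sufficiency I would assume $u = u_1 - u_2$ with $u_1, u_2 \in E(\mathcal{U})$ real-valued. Since $E(\mathcal{U}) \subset S(\mathcal{U})$ and $u_1 + u_2 < \infty$ everywhere, i.e. $[u_1 + u_2 < \infty] = E$, Theorem \ref{thm 2.6}(ii) applies and gives $V(u) \le u_1 + u_2 < \infty$ on all of $E$. As a difference of two finite excessive functions $u$ is finely continuous and $u(X)$ has right-continuous paths, so $(u(X_t))_t$ is an adapted, right-continuous, integrable process with $\mathrm{Var}^{\mathbb{P}^x}(u(X)) = V(u)(x) < \infty$ under each $\mathbb{P}^x$ by part (i); hence $u$ is a quasimartingale function.

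For necessity I would start from $u$ being a quasimartingale function, so that $\sup_t P_t|u| \le \mathrm{Var}^{\mathbb{P}^x}(u(X)) < \infty$ and, by Remark \ref{rem 2.4}, $u$ is finely continuous; in particular $P_t|u| < \infty$ for all $t$, so the hypotheses of Theorem \ref{thm 2.6} hold. Part (i) then gives $V(u)(x) = \mathrm{Var}^{\mathbb{P}^x}(u(X)) < \infty$ for every $x$, i.e. $[V(u) < \infty] = E$. Feeding the fine continuity of $u$ into part (iii) produces $u_1, u_2 \in E(\mathcal{U})$ with $[u_1 + u_2 < \infty] = [V(u) < \infty] = E$ and $u = u_1 - u_2$ on $[V(u) < \infty] = E$; since $[u_1 + u_2 < \infty] = E$ both $u_1, u_2$ are real-valued, which is exactly the required global decomposition. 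For the last clause I would invoke the equality $[\sup_n V_{\tau_n}(u) < \infty] = [V(u) < \infty] = E$ from part (iii), valid for any admissible $(\tau_n)_n$, together with the fact that the proof of Theorem \ref{thm 2.6}(iii) constructs $u_1$ precisely as $\sup_n V_{\tau_n}(u)$; hence one may take $u_1 := \sup_n V_{\tau_n}(u)$.

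The genuinely delicate point is inherited from the proof of Theorem \ref{thm 2.6}(iii), namely that $u_1 := \sup_n V_{\tau_n}(u)$ is not merely supermedian but excessive. Supermedianity itself is a short shift-and-refine computation: inserting a time $s$ between consecutive partition points only increases $V_\tau(u)$, while the elementary identity $V_{\{0,s\}\cup(s+\tau)}(u) = |u - P_s u| + P_s V_\tau(u)$ forces $P_s V_\tau(u) \le V(u)$, whence $P_s V(u) \le V(u)$. The role of admissibility is exactly to keep this translation argument inside the fixed countable family $\bigcup_k \tau_k$, so that the passage to the supremum and the regularization $\lim_{s \to 0} P_s(\cdot) = (\cdot)$ can be carried out to upgrade supermedian to excessive. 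This is where I expect the real work to sit, but since it is already absorbed into Theorem \ref{thm 2.6}, the corollary reduces to a direct reading of parts (i)--(iii).
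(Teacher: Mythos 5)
Your proposal is correct and takes essentially the route the paper intends: Corollary \ref{coro 2.7} is presented as a direct consequence of Theorem \ref{thm 2.6} (and its proof), with sufficiency read off from parts (i)--(ii) applied on $[u_1+u_2<\infty]=E$ and necessity from Remark \ref{rem 2.4} together with parts (i) and (iii). One small correction on the final clause: the proof of Theorem \ref{thm 2.6}(iii) does \emph{not} construct $u_1$ as $\sup_n V_{\tau_n}(u)$; it constructs $u_1=\sup_n u_1^{\tau_n}$ (the sums of positive parts) and shows $u_1+u_2=\sup_n V_{\tau_n}(u)$, so to justify taking the first component equal to $\sup_n V_{\tau_n}(u)$ you should regroup as $u=(u_1+u_2)-2u_2$, noting that both $u_1+u_2$ and $2u_2$ are real-valued excessive functions once $[V(u)<\infty]=E$. (Your closing heuristic ``$P_sV_\tau(u)\le V(u)$ for all $\tau$, whence $P_sV(u)\le V(u)$'' does not literally follow, since $P_s$ need not commute with a supremum over the uncountable family of all partitions --- this is precisely why the paper works with admissible sequences --- but as you defer that point to Theorem \ref{thm 2.6} it does not affect the corollary.)
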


For the proof of Theorem \ref{thm 2.6} we need the following lemma. 
Because we found this result only as an exercise (left for the reader) 
in \cite{Sh88}, Exercise 10.24 or \cite{BlGe68}, Exercise 4.14, we include its complete proof below.
 
The first hitting time of a set $A \in \mathcal{B}$ by the process $X$ is defined by 
$T_A := \inf \{ t > 0 : X_t \in A \}$. It is well known that $T_A$ is a stopping time; 
see \cite{BlGe68} or \cite{Sh88}.

\begin{lem} \label{lem 2.8} 
If $u$ is finely continuous and bounded then so is $P_s u$ for all $s \geq 0$.
\end{lem}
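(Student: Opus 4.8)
The plan is to dispatch boundedness immediately and then concentrate on fine continuity, which is the substance of the statement. Boundedness is clear: if $|u|\le M$ then $|P_s u|\le P_s|u|\le M\,P_s 1\le M$, since $(P_t)_{t\ge 0}$ is sub-Markovian (recall the convention $u(\Delta)=0$). The case $s=0$ is trivial because $P_0u=u$, so from now on I fix $s>0$. I will also use that $P_s u$ is nearly Borel measurable, which holds for a right process and is what allows one to speak of its fine continuity in the first place.

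To prove fine continuity I will use the characterization already invoked in Remark~\ref{rem 2.4} (\cite{BlGe68}, Theorem~4.8): a nearly Borel function is finely continuous precisely when its composition with $X$ has $\mathbb{P}^x$-a.s. right-continuous trajectories for every $x\in E$. Thus it suffices to show that $t\mapsto P_su(X_t)$ is a.s. right-continuous. The entry point is the Markov property, which expresses this path as a conditional expectation,
\[
P_su(X_t)=\mathbb{E}^x\!\left[u(X_{t+s})\mid\mathcal{F}_t\right],\qquad t\ge 0 .
\]
Fixing $t_0\ge 0$ and a sequence $t_n\downarrow t_0$, three facts are available as $n\to\infty$: the fine continuity of $u$ makes $r\mapsto u(X_r)$ a.s. right-continuous, whence $u(X_{t_n+s})\to u(X_{t_0+s})$ a.s.; the usual hypotheses give $\mathcal{F}_{t_n}\downarrow\mathcal{F}_{t_0+}=\mathcal{F}_{t_0}$; and the integrands are dominated by the constant $M$.

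The one real step is then to interchange the limit with the conditional expectation while both the integrand $u(X_{t_n+s})$ and the $\sigma$-algebra $\mathcal{F}_{t_n}$ move with $n$. This is exactly Hunt's lemma for a decreasing sequence of $\sigma$-algebras: with $Z_n:=u(X_{t_n+s})\to Z:=u(X_{t_0+s})$ a.s., $|Z_n|\le M$, and $\mathcal{F}_{t_n}\downarrow\mathcal{F}_{t_0}$, it yields
\[
P_su(X_{t_n})=\mathbb{E}^x\!\left[Z_n\mid\mathcal{F}_{t_n}\right]\longrightarrow\mathbb{E}^x\!\left[Z\mid\mathcal{F}_{t_0}\right]=P_su(X_{t_0})\qquad\mathbb{P}^x\text{-a.s.},
\]
the last equality again by the Markov property. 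Since $t_0$ and the sequence were arbitrary, $t\mapsto P_su(X_t)$ is a.s. right-continuous, and the cited characterization then gives that $P_su$ is finely continuous.

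The main obstacle is precisely the limit interchange in the last display; everything surrounding it is bookkeeping. If one prefers to avoid quoting Hunt's lemma, this instance can be proved by hand: write $\mathbb{E}^x[u(X_{t_n+s})\mid\mathcal{F}_{t_n}]$ as $\mathbb{E}^x[u(X_{t_0+s})\mid\mathcal{F}_{t_n}]$ plus an error controlled by $\mathbb{E}^x[\,|u(X_{t_n+s})-u(X_{t_0+s})|\mid\mathcal{F}_{t_n}]$, handle the first term through the a.s. convergence of the bounded martingale $r\mapsto\mathbb{E}^x[u(X_{t_0+s})\mid\mathcal{F}_r]$ at $r=t_0$, and absorb the error using right-continuity of $r\mapsto u(X_r)$ together with dominated convergence. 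A secondary, purely technical point worth checking is the near-Borel measurability of $P_su$, without which the Blumenthal--Getoor characterization could not be applied.
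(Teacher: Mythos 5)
Your reduction of fine continuity to a path property of $P_su(X)$ via \cite{BlGe68}, Theorem 4.8, is legitimate in principle, and the local step — the decreasing-filtration version of Hunt's lemma applied to $Z_n=u(X_{t_n+s})$ with $\mathcal{F}_{t_n}\downarrow\mathcal{F}_{t_0}$ — is correct for a \emph{fixed} $t_0$ and a \emph{fixed} sequence $t_n\downarrow t_0$. The gap is in the final sentence, ``since $t_0$ and the sequence were arbitrary, $t\mapsto P_su(X_t)$ is a.s.\ right-continuous.'' What you have proved is that for each fixed $t_0$ and each fixed sequence there is a null set (depending on both) off which $P_su(X_{t_n})\to P_su(X_{t_0})$. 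The criterion you invoke requires a \emph{single} null set off which the path $t\mapsto P_su(X_t(\omega))$ is right-continuous at \emph{every} $t$. These are genuinely different statements: the Poisson process is a.s.\ continuous at each fixed time yet has no continuous paths. There is no separability or countable-dense-set reduction available here, because the sequences along which a putative discontinuity occurs are $\omega$-dependent, and you have no a priori regularity of $t\mapsto P_su(X_t)$ to fall back on. Your ``by hand'' alternative via the backward martingale $r\mapsto\mathbb{E}^x[u(X_{t_0+s})\mid\mathcal{F}_r]$ suffers from exactly the same fixed-$t_0$, fixed-sequence limitation. Patching this requires either the theory of optional projections of right-continuous processes (section-theorem level machinery), or replacing deterministic times by stopping times — which is what the paper does.

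The paper's proof sidesteps path regularity of $P_su(X)$ entirely: it verifies fine continuity of $f=P_su$ at $x$ directly from the definition, by showing $x$ is irregular for $A=\{f\ge f(x)+\varepsilon\}$ and $\{f\le f(x)-\varepsilon\}$. Assuming $T_A=0$ $\mathbb{P}^x$-a.s.\ (legitimate by the zero–one law), it approximates $T_A$ by hitting times $T_{A_n}$ of closed sets, applies the \emph{strong} Markov property to write $\mathbb{E}^xf(X_{T_{A_n}})=\mathbb{E}^xu(X_{s+T_{A_n}})\to f(x)$ — here only the a.s.\ right-continuity of $r\mapsto u(X_r)$ at the random times $s+T_{A_n}\downarrow s$ is needed, which \emph{is} available since $u$ is finely continuous — and derives a contradiction from $f(X_{T_{A_n}})\ge f(x)+\varepsilon$. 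If you want to keep your architecture, you would need to replace deterministic $t_n$ by stopping times and invoke the strong Markov property in essentially this way; as written, the proof does not close.
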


\begin{proof}
 Since $u$ is finely continuous, by \cite{BlGe68}, Theorem 4.8, 
 it follows that the mappings $t \mapsto u(X_t)$ are right continuous a.s. 
Let $s > 0$ and set $f := P_s u$.
In order to show that $f$ is finely continuous it is sufficient to prove that if 
$\varepsilon > 0$ then $x$ is irregular for 
$A = f^{-1}([f(x) + \varepsilon, \infty))$ and $B = f^{-1}((-\infty, f(x) - \varepsilon])$. 
We treat only the first case. Let $(A_n)_n$ be an increasing sequence of closed sets such that 
$T_{A_n} \searrow T_A \; \mathbb{P}^x$-a.s. 
By the zero-one law (\cite{BlGe68}, Proposition 5.17), $\mathbb{P}^x(T_A = 0) \in \{0,1\}$. 
Assume that $x$ is regular for $A$, i.e. $T_A = 0$ $\mathbb{P}^x$-a.s. 
Then by the strong Markov property and dominated convergence theorem, 
$\mathbb{E}^xf(X_{T_{A_n}}) = 
\mathbb{E}^x\{\mathbb{E}^x[u(X_{s + T_{A_n}})| \mathcal{F}_{T_{A_n}}]\} = 
\mathbb{E}^x u(X_{s + T_{A_n}}) \mathop{\longrightarrow}\limits_{n} f(x)$. 
On the other hand, by the definition of $T_{A_n}$ we have that $f(X_{T_{A_n}}) \geq f(x) + \varepsilon$, 
which contradicts the previous convergence.
\end{proof}

\vspace{0.2cm}

\begin{proof}[Proof of Theorem \ref{thm 2.6}.] 

i). By the Markov property, for all  $x \in E$

${Var}^{\mathbb{P}^x}(u(X)) = 
\mathop{\sup}\limits_{\tau} \mathbb{E}^x\{ \mathop{\sum}\limits_{i=1}^{n} |u(X_{t_{i-1}}) - 
\mathbb{E}^x[u(X_{t_i})|\mathcal{F}_{t_{i-1}}]| + |u(X_{t_n})| \} $

\vspace{0.2cm}

\qquad \qquad \qquad 
$= \mathop{\sup}\limits_{\tau} \{ \mathbb{E}^x\{\mathop{\sum}\limits_{i=1}^{n} |u(X_{t_{i-1}}) - P_{t_i - t_{i-1}} u(X_{t_{i-1}})| + 
P_{t_n}|u|(x)\} \}$

\vspace{0.2cm}

\qquad \qquad \qquad $
= \mathop{\sup}\limits_{\tau} \{ \mathop{\sum}\limits_{i=1}^{n} \mathbb{E}^x[|u - P_{t_i - t_{i-1}}u|(X_{t_{i-1}})] + P_{t_n}|u|(x) \}$

\vspace{0.2cm}

\qquad \qquad \qquad 
$= \mathop{\sup}\limits_{\tau} \{ \mathop{\sum}\limits_{i=1}^{n} P_{t_{i-1}}|u - P_{t_i - t_{i-1}}u|(x) + P_{t_n}|u|(x) \} = V(u)(x)$.

\vspace{0.2cm}

\noindent 
Note that the above expressions make sense because by hypothesis, $P_t|u| < \infty$ for all $t$.

ii). Since $u_1, u_2 \in S(\mathcal{U})$ we have that $A:= [u_1 + u_2 < \infty]$ 
satisfies $P_t1_{A^c} = \mathop{\lim}\limits_{n} P_t1_{[u_1 +u_2 > n]} 
\leq \mathop{\lim}\limits_{n} \dfrac{1}{n} P_t(1_{[u_1 +u_2 > n]} (u_1 +u_2)) 
\leq \mathop{\lim}\limits_{n} \dfrac{u_1 +u_2}{n} = 0$ on $A$ for all $t > 0$. 
This leads to $1_AP_t f = 1_A P_t(f 1_A)$ for all $\mathcal{B}$-measurable $f$ for which $P_t|f| < \infty$. 
Indeed, $|1_A P_t(f 1_{A^c})| \leq 1_A P_t(|f|1_{A^c}) = 
\mathop{\sup}\limits_{n}1_A P_t((|f|\wedge n)1_{A^c}) \leq \mathop{\sup}\limits_{n} 1_A n P_t 1_{A^c} = 0$.

By the previous remarks, we get

\vspace{0.2cm}

\qquad $\; \; 1_A V(u) = \mathop{\sup}\limits_{\tau}\{ \mathop{\sum}\limits_{i=1}^{n} 1_A P_{t_{i-1}}|u - P_{t_i - t_{i-1}}u| + 1_A P_{t_n}|u| \}$ 

\vspace{0.2cm}
\qquad \qquad \qquad $ = \mathop{\sup}\limits_{\tau} \{ \mathop{\sum}\limits_{i=1}^{n} 1_A P_{t_{i-1}}|1_A u - 1_A P_{t_i - t_{i-1}}(1_A u)| + 1_A P_{t_n}1_A|u| \}$

\vspace{0.2cm}
\qquad \qquad \qquad $\leq \mathop{\sup}\limits_{\tau} \{ \mathop{\sum}\limits_{i=1}^{n} 1_A P_{t_{i-1}} |1_A u_1 - 1_A P_{t_i - t_{i-1}}(1_A u_1)| + 1_A P_{t_n}(1_A u_1) \}$

\vspace{0.2cm}
\qquad \qquad \qquad \; \; $+ \mathop{\sup}\limits_{\tau} \{ \mathop{\sum}\limits_{i=1}^{n} 1_A P_{t_{i-1}} |1_A u_2 - 1_A P_{t_i - t_{i-1}}(1_A u_2)| + 1_A P_{t_n}(1_A u_2) \}$

\vspace{0.2cm}
\qquad \qquad \qquad $= 1_A \mathop{\sup}\limits_{\tau} \{ \mathop{\sum}\limits_{i=1}^{n} [P_{t_{i-1}} u_1 - P_{t_i} u_1] + P_{t_n} u_1 \} + 1_A \mathop{\sup}\limits_{\tau} \{ \mathop{\sum}\limits_{i=1}^{n} [P_{t_{i-1}} u_2 - P_{t_i}u_2] + P_{t_n} u_2 \}$

\vspace{0.2cm}
\qquad \qquad \qquad $= 1_A(u_1 + u_2)$.

\vspace{0.2cm}
iii). For each partition $\tau : 0 = t_0 \leq t_1 \leq \ldots \leq t_n < \infty$, we set
$$
u^{\tau}_1 := \mathop{\sum}\limits_{i=1}^{n} P_{t_{i-1}}(u - P_{t_i - t_{i-1}} u)^+ + P_{t_n}(u^+) < \infty
$$
$$
u^{\tau}_2 := \mathop{\sum}\limits_{i=1}^{n} P_{t_{i-1}}(u - P_{t_i - t_{i-1}} u)^- + P_{t_n}(u^-) < \infty.
$$

Let $\prec$ denote the ordering of set containment and suppose that 
$\sigma$ and $\tau$ are two finite partitions of $\mathbb{R}_+$ s.t. $\sigma \prec \tau$. 
We claim that $u^{\sigma}_i \leq u^{\tau}_i$, $i = \overline{1,2}$. 
To see this, let $\sigma : 0 = t_0 \leq t_1 \leq \ldots \leq t_n < \infty$ 
and note that it is enough to consider $\tau$ as a partition obtained from $\sigma$ 
by adding an extra point $t$ before $t_1$, after $t_n$, 
or between some $t_i$ and $t_{i+1}$. 
In the first case we have 
$(u - P_{t_1}u)^{\pm} \leq (u - P_t u)^{\pm} + (P_t(u - P_{t_1 - t}u))^{\pm} \leq (u - P_t u)^{\pm} + P_t(u - P_{t_1 - t}u)^{\pm}$.

\noindent
If $t \geq t_n$ then $P_{t_n}(u^{\pm}) \leq P_{t_n}(u - P_{t - t_n}u)^{\pm} + P_t(u^{\pm})$, and if $t_i \leq t \leq t_{i+1}$, then $(u - P_{t_{i+1} - t_i}u)^{\pm} \leq (P_{t - t_i}(u - P_{t_{i+1} - t}u))^{\pm} + (u - P_{t - t_i}u)^{\pm} \leq P_{t - t_i}(u - P_{t_{i+1} - t}u)^{\pm} + (u - P_{t - t_i}u)^{\pm}$, hence $P_{t_i}(u - P_{t_{i+1} - t_i}u)^{\pm} \leq P_t(u - P_{t_{i+1} - t}u)^{\pm} + P_{t_i}(u - P_{t - t_i}u)^{\pm}$.

\noindent
Therefore, $u^{\sigma}_i \leq u^{\tau}_i$, $i = \overline{1,2}$.

Let now $(\tau_n)_{n \geq 1}$ be an admissible sequence of partitions of $\mathbb{R}_+$ and define
$$
u_1 := \mathop{\sup}\limits_{n} u^{\tau_n}_1 = \mathop{\lim}\limits_{n} u^{\tau_n}_1 \; {\rm and} \;
u_2 := \mathop{\sup}\limits_{n} u^{\tau_n}_2 = \mathop{\lim}\limits_{n} u^{\tau_n}_2.
$$

\noindent
Then $u_1 + u_2 = \mathop{\sup}\limits_{n} V_{\tau_n}(u) < \infty$ on $[V(u) < \infty]$.

Now, if $r \in \mathop{\bigcup}\limits_{n \geq 1} \tau_n$,

\quad $P_r u_1 = \mathop{\sup}\limits_{n} P_r u^{\tau_n}_1 = \mathop{\sup}\limits_{n} \{ \mathop{\sum}\limits_{i=1}^{n} P_{r + t_{i-1}}(u - P_{t_i - t_{i-1}}u)^+ + P_{r + t_n}(u^+) \}$

\vspace{0.2cm}
\quad \qquad \;$= \mathop{\sup}\limits_{n} \{\mathop{\sum}\limits_{i=1}^{n} P_{r + t_{i-1}}(u - P_{r + t_i - (r + t_{i-1})} u)^+ + P_{r + t_n}(u^+) \}$

\vspace{0.2cm}
\quad \qquad \;$\leq u_1$,

\vspace{0.2cm}
\noindent
because the last supremum is taken over a class of partitions included in $\{\tau_n : n \geq 1\}$.
Analogously, $P_r u_2 \leq u_2$ for all $r \in \mathop{\bigcup}\limits_{n \geq 1}\tau_n$.
Then, 

$u_2 + u = \mathop{\sup}\limits_{n} \left\{ \mathop{\sum}\limits_{i=1}^{n} P_{t_{i-1}}(u - P_{t_i - t_{i-1}}u)^- + P_{t_n}(u^-)  + \mathop{\sum}\limits_{i=1}^{n} P_{t_{i-1}}(u - P_{t_i - t_{i-1}}u) + P_{t_n}u \right\}$

\vspace{0.2cm}
\qquad \quad $= \mathop{\sup}\limits_{n} \left\{ \mathop{\sum}\limits_{i=1}^{n} P_{t_{i-1}}(u - P_{t_i - t_{i-1}}u)^- + P_{t_n}(u^-) \right\}$

\vspace{0.2cm}
\qquad \quad $= u_1$,

\vspace{0.2cm}
\noindent and $u = u_1 - u_2$ on $[\mathop{\sup}\limits_{n} V_{\tau_n}(n) < \infty]$.

\vspace{0.2cm}

{\it Case 1}. Assume that $u$ is lower bounded. 
We claim that $u_1,u_2 \in E(\mathcal{U})$ and $[V(u) < \infty] = [\mathop{\sup}\limits_{n} V_{\tau_n}(u) < \infty]$.
First, note that if $u_1, u_2 \in E(\mathcal{U})$, 
since $u_1 + u_2 = \mathop{\sup}\limits_{n} V_{\tau_n}(u)$, 
$[V(u) < \infty] \subset [\mathop{\sup}\limits_{n} V_{\tau_n}(u) < \infty]$, and $u = u_1 - u_2$ on $[u_1 + u_2 < \infty]$, 
by ii) we obtain $[V(u) < \infty] = [\mathop{\sup}\limits_{n} V_{\tau_n}(u) < \infty]$.

It remains to show that $u_1, u_2 \in E(\mathcal{U})$. 
By Lemma \ref{lem 2.8}, the functions $\varphi_{k,l}^n 
:= \mathop{\sum}\limits_{i=1}^{n} P_{t_{i-1}} [(u - P_{t_i - t_{i-1}}(u \wedge k))^- \wedge l]$ 
are finely continuous and 
$u_2 = \mathop{\sup}\limits_{n}\mathop{\sup}\limits_{k}\mathop{\sup}\limits_{l} \varphi_{k,l}^n$ 
is finely lower semi-continuous. 
Moreover, if $t \in \mathbb{R}_+$ and 
$(t_j)_j \subset \mathop{\bigcup}\limits_{n \geq 1} \tau_n$, $t_j \searrow t$, then

$$
P_t u_2 = \mathop{\sup}\limits_{n}\mathop{\sup}\limits_{k}\mathop{\sup}\limits_{l} P_t \varphi_{k,l}^n = \mathop{\sup}\limits_{n}\mathop{\sup}\limits_{k}\mathop{\sup}\limits_{l}\mathop{\lim}\limits_{j} P_{t_j} \varphi_{k,l}^n \leq \mathop{\lim\inf}\limits_{j} P_{t_j} u_2 \leq u_2,
$$
so $u_2$ is supermedian, and by \cite{BeBo04}, Corollary 1.3.4, it is excessive.
Now, since $u_1 = u_2 + u$ is finely continuous for $t \in \mathbb{R}_+$ and $(t_j)_j$ as before,
$$
P_t u_1 = \mathop{\sup}\limits_{k} P_t(u_1 \wedge k) = \mathop{\sup}\limits_{k} \mathop{\lim}\limits_{j} P_{t_j}(u_1 \wedge k) \leq u_1,
$$

\noindent and $u_1 \in E(\mathcal{U})$.

\vspace{0.2cm}

{\it Case 2.} Let now $u$ be arbitrary. 
Then $u^+ = u_1 - u_1 \wedge u_2$ and $u^- = u_2 - u_1 \wedge u_2$ 
are finely continuous and of course, lower bounded.
Applying Case 1 to $u^+$ and $u^-$ we have that $u = u^+ - u^-$ 
is the difference of two real-valued excessive functions on $[V(u^+) < \infty] \cap [V(u^-) < \infty]$.
Let us show that $[V(u) < \infty] = [\mathop{\sup}\limits_{n} V_{\tau_n}(u) < \infty] = [V(u^+) < 
\infty] \cap [V(u^-) < \infty]$, which completes the proof. 
Arguing as in the proof of ii), one can check that 
$A = [u_1 + u_2 < \infty]=[\mathop{\sup}\limits_{n} V_{\tau_n}(u) < \infty]$ 
satisfies $P_r 1_{A^c} = 0$ on $A$ for all $r \in \mathop{\bigcup}\limits_{n \geq 1} \tau_n$, 
and further, $V(u^{\pm}) = \mathop{\sup}\limits_{n} V_{\tau_n}(u^{\pm}) \leq u_1 + u_2$ on $A$. 
Taking into account the sub-additivity of $f \mapsto V(f)$,
$$
[V(u) < \infty] \subset [\mathop{\sup}\limits_{n} V_{\tau_n}(u) < \infty] = A \subset [V(u^+) + V(u^-) < \infty] \subset [V(u) < \infty].
$$
\end{proof}

We say that the process $X$ is {\it irreducible} (in the strong sense) if the only non-empty absorbing set is the hole space $E$.
Often in practice, the irreducibility of $\mathcal{U}$ is ensured by the {\it strong Feller} properly 
(i.e. $U_\alpha$ maps bounded measurable functions into continuous ones) in association with the 
{\it topological irreducibility} (i.e. $U_\alpha 1_D > 0$ for all open sets $D \subset E$); cf. e.g. \cite{Ha10}.

\begin{coro} \label{coro 2.9} 
Let $u$ be a real-valued $\mathcal{B}$-measurable finely continuous function and assume that there exists $x_0 \in E$ such that $(e^{-\alpha t}u(X_t))_{t\geq 0}$ is a $\mathbb{P}^{x_0}$-quasimartingale for some $\alpha \geq 0$.
The following assertions hold.

\vspace{0.2cm}

i) If $\mathcal{U}$ is irreducible then $(e^{-\alpha t}u(X_t))_{t \geq 0}$ is a $\mathbb{P}^x$-quasimartingale for all $x \in E$.

\vspace{0.2cm}

ii) If $\mathcal{U}$ is strong Feller and topologically irreducible then $\mathcal{U}$ is irreducible.
\end{coro}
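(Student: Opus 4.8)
The plan is to deduce both assertions from Theorem \ref{thm 2.6} and from the elementary properties of absorbing sets recalled above, applied to the subordinate transition function $(P^\alpha_t)_{t\ge0}$, where $P^\alpha_t=e^{-\alpha t}P_t$.

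For i) I would first repeat the computation proving Theorem \ref{thm 2.6}(i), but for the semigroup $(P^\alpha_t)_t$ in place of $(P_t)_t$: the Markov property turns $\mathbb{E}^x[e^{-\alpha t_i}u(X_{t_i})\mid\mathcal{F}_{t_{i-1}}]$ into $e^{-\alpha t_{i-1}}P^\alpha_{t_i-t_{i-1}}u(X_{t_{i-1}})$, whence
$$
{Var}^{\mathbb{P}^x}\big((e^{-\alpha t}u(X_t))_{t\ge0}\big)=V^\alpha(u)(x),\qquad x\in E.
$$
Together with Remark \ref{rem 2.4}, the inequality $V^\alpha(u)\ge P^\alpha_t|u|$ (which secures integrability), and the fact that fine continuity of $u$ makes the trajectories right continuous, this gives
$$
\{x\in E:(e^{-\alpha t}u(X_t))_{t\ge0}\ \text{is a }\mathbb{P}^x\text{-quasimartingale}\}=[V^\alpha(u)<\infty].
$$
Since $u$ is finely continuous, Theorem \ref{thm 2.6}(iii) applied to $(P^\alpha_t)_t$ shows that $[V^\alpha(u)<\infty]$ is absorbing; by hypothesis it contains $x_0$, so it is non-empty, and irreducibility forces it to equal $E$, which is exactly i). The one point needing care is the standing assumption $P^\alpha_t|u|<\infty$ in Theorem \ref{thm 2.6}; I would circumvent it by working with the $\alpha$-excessive functions $u_1,u_2$ built from the finite quantities $V^\alpha_{\tau_n}(u)$ in the proof of Theorem \ref{thm 2.6}(iii), for which $[V^\alpha(u)<\infty]=[u_1+u_2<\infty]$ with $u_1+u_2\in E(\mathcal{U}^\alpha)$, the latter set being absorbing because $[v<\infty]$ is absorbing for every $v\in E(\mathcal{U}^\alpha)$.

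For ii) I would argue by contradiction: let $A$ be a non-empty absorbing set with $A\neq E$. Since $A$ is absorbing, $U_\alpha 1_{E\setminus A}=0$ on $A$, and $h:=U_\alpha 1_{E\setminus A}$ is an $\alpha$-potential, hence $\alpha$-excessive, and continuous by the strong Feller property. If $E\setminus A$ had non-empty interior $D$, then topological irreducibility would give $U_\alpha 1_D>0$ everywhere, contradicting $U_\alpha 1_D\le U_\alpha 1_{E\setminus A}=0$ on the non-empty set $A$; hence $A$ is dense. To conclude that $A=E$ it then suffices to prove that $E\setminus A$ is open, since the complement of a dense set contains no non-empty open set. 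For this I would use that $1_{E\setminus A}$ is $\alpha$-supermedian — indeed $P^\alpha_t 1_{E\setminus A}=0$ on $A$ because the process started in $A$ never reaches $E\setminus A$, while trivially $P^\alpha_t 1_{E\setminus A}\le1$ on $E\setminus A$ — so that its $\alpha$-excessive regularization $\sup_{\beta}\beta U_{\alpha+\beta}1_{E\setminus A}$ is lower semicontinuous by the strong Feller property.

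The step I expect to be the main obstacle is precisely this last one: the excessive regularization of $1_{E\setminus A}$ agrees with $1_{E\setminus A}$ only off a semipolar set, so lower semicontinuity of the regularization does not by itself force $E\setminus A$ to be open. I would resolve this by exploiting the fine-topological regularity of absorbing sets recalled before Definition \ref{defi 2.5} (so that $A$ is finely open and $E\setminus A$ finely closed) in combination with the genuine continuity supplied by the strong Feller property, in order to rule out an exceptional semipolar part inside $E\setminus A$ and thereby obtain that $1_{E\setminus A}$ is itself lower semicontinuous. Once $E\setminus A$ is open it must be empty by the density of $A$, contradicting $A\neq E$; this proves ii).
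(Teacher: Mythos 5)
Your treatment of i) is correct and is essentially the paper's argument: identify $\mathrm{Var}^{\mathbb{P}^x}\bigl((e^{-\alpha t}u(X_t))_t\bigr)$ with $V^{\alpha}(u)(x)$ (the paper simply quotes Proposition \ref{prop 3.1} with $M_t=e^{-\alpha t}$ instead of redoing the computation of Theorem \ref{thm 2.6} i)), observe via Theorem \ref{thm 2.6} iii) that $[V^{\alpha}(u)<\infty]$ is absorbing and contains $x_0$, and invoke irreducibility. Your extra care about the standing hypothesis $P_t^{\alpha}|u|<\infty$ is reasonable (the paper passes over it in silence) and does not change the route.

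Part ii) contains a genuine gap, and it is exactly the step you flag yourself. The first half (the topological interior of $E\setminus A$ must be empty, since otherwise $U_{\alpha}1_D>0$ everywhere would contradict $U_{\alpha}1_D\le U_{\alpha}1_{E\setminus A}=0$ on $A$) is fine. But your plan to conclude by showing that $E\setminus A$ is \emph{open}, via lower semicontinuity of $1_{E\setminus A}$, cannot be carried out as sketched: the strong Feller property regularizes $U_{\alpha}f$, not $f$ itself, and the $\alpha$-excessive regularization of $1_{E\setminus A}$ vanishes identically on $A$ while on $E\setminus A$ it equals the probability of remaining in $E\setminus A$ for small times, so it agrees with $1_{E\setminus A}$ only at points regular for $E\setminus A$. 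Ruling out a semipolar exceptional set would at best give equality quasi-everywhere, and fine openness of $A$ gives no control on the topological closure of $A$; none of this forces $E\setminus A$ to be open. The paper never attempts to show that $E\setminus A$ is open. It applies the resolvent \emph{first}: set $E_0:=[U_1 1_{E\setminus A}=0]\supset A$, which is closed because $U_1 1_{E\setminus A}$ is genuinely continuous by the strong Feller property, so $E\setminus E_0$ is open; if it were non-empty, topological irreducibility would give $U_1 1_{E\setminus E_0}>0$ everywhere, contradicting $U_1 1_{E\setminus E_0}\le U_1 1_{E\setminus A}=0$ on the non-empty set $A$, whence $E_0=E$. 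You should rework the second half of ii) along these lines, replacing the indicator by its potential before asking for any topological regularity.
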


\begin{proof}
i). By Proposition \ref{prop 3.1} below we have that 
$V^{\alpha}(u)(x_0) = Var^{\mathbb{P}^{x_0}}((e^{-\alpha t}u(X_t))_{t \geq 0}) < \infty$, 
hence $A:=[V^{\alpha}(u) < \infty]$ is absorbing (cf. Theorem \ref{thm 2.6}, iii)) and non-empty.
Since $\mathcal{U}$ is irreducible it follows that $A=E$.

\vspace{0.2cm}
ii). Let $B \in \mathcal{B}$ be absorbing and set $E_0 := [U_1 1_{E\setminus B} = 0] \supset B$.
The strong Feller property implies that $E\setminus E_0$ is an open set 
and $1_{E \setminus B} \geq U_1 1_{E \setminus B} \geq U_1 1_{E \setminus E_0}$ leads to $E_0 = E$.
\end{proof}

Following \cite{Ge80}, $X$ is called {\it recurrent} if either $U1_B=0$ or $U1_B=\infty$ for all $B \in \mathcal{B}$.
Getoor showed that $\mathcal{U}$ is recurrent if and only if any excessive function is constant, 
hence Corollary \ref{coro 2.7} gives the following quasimartingale characterization of recurrence. 

\begin{coro} \label{coro 2.10}
$X$ is recurrent if and only if every quasimartingale function is constant.
\end{coro}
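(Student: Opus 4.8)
The plan is to read the statement entirely through the two inputs available: Getoor's criterion, that $\mathcal{U}$ is recurrent if and only if every excessive function is constant, and Corollary~\ref{coro 2.7}, which identifies the quasimartingale functions with the differences $u = u_1 - u_2$ of two finite functions $u_1, u_2 \in E(\mathcal{U})$. The forward implication is then immediate: if $X$ is recurrent, Getoor's criterion forces every member of $E(\mathcal{U})$ to be constant, so any quasimartingale function, being a difference of two such constants, is itself constant.

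For the converse I would first extract the weakest useful consequence of the hypothesis. Taking $u_2 := 0$ in Corollary~\ref{coro 2.7}, every finite (real-valued) excessive function is itself a quasimartingale function, hence constant by assumption. Thus the hypothesis is equivalent to: every finite excessive function is constant, and it remains to upgrade this to ``every excessive function is constant'' so that Getoor's criterion applies. To this end I would fix an arbitrary $v \in E(\mathcal{U})$ and truncate: for each $n$ the function $v \wedge n$ is supermedian and bounded, so its excessive regularization $\widehat{v \wedge n} := \lim_{t \downarrow 0} P_t(v \wedge n)$ lies in $E(\mathcal{U})$, is bounded by $n$, and is therefore a finite excessive function. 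By the previous step each $\widehat{v \wedge n}$ equals a constant $c_n \leq n$, and since $v \wedge n \uparrow v$ the regularizations increase to an excessive function equal to the constant $c := \sup_n c_n \in [0, \infty]$.

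The remaining, and genuinely delicate, point is to pass from $\widehat{v \wedge n}$ back to $v$. Here I would use that regularization alters a supermedian function only on a semipolar set, so $\widehat{v \wedge n} = v \wedge n$ off a semipolar set $N_n$; letting $n \to \infty$ gives $v = c$ off the semipolar set $N := \bigcup_n N_n$. The main obstacle is then to conclude $v \equiv c$ everywhere, i.e. to show that two excessive functions coinciding off a semipolar set must coincide. I expect to settle this by the standard observation that the process spends Lebesgue-almost no time in a semipolar set, so that $v(X_s) = c$ for Lebesgue-a.e. $s$ almost surely; integrating against $e^{-\alpha s}\,ds$ yields $U_\alpha v = U_\alpha c$, whence $\alpha U_\alpha v = \alpha U_\alpha c$ pointwise, and letting $\alpha \to \infty$ recovers $v = \sup_\alpha \alpha U_\alpha v = c$. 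With every excessive function shown to be constant, Getoor's criterion gives that $X$ is recurrent, completing the equivalence.
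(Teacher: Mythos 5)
Your proof is correct and is essentially the paper's own argument: the corollary is stated there with no proof beyond the preceding remark, i.e.\ it is exactly the combination of Getoor's characterization of recurrence with Corollary \ref{coro 2.7} that you use. The only place you go beyond the paper is in upgrading ``every real-valued excessive function is constant'' to ``every $[0,\infty]$-valued excessive function is constant'', and there the step you flag as genuinely delicate can be bypassed entirely: since $v$ is excessive (not merely supermedian), monotone convergence gives $\sup_n \widehat{v\wedge n}=\sup_n\sup_{t>0}P_t(v\wedge n)=\sup_{t>0}P_tv=v$, so $v=\sup_n c_n$ is constant with no semipolar-set argument; alternatively, $v\wedge n$ is finely continuous (as $v$ is), bounded and supermedian, hence already excessive by bounded convergence along the right-continuous paths, so no regularization is needed in the first place. (Your detour would also go through, and in fact more cheaply than via semipolarity, since $U_\alpha\widehat{v\wedge n}=U_\alpha(v\wedge n)$ shows the exceptional set is of potential zero.)
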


\noindent
{\bf A Riesz type decomposition.}
Extending \cite{Me66} (see also \cite{Sh88}, Chapter VI), a quasimartingale function $f$ 
is called {\it (locally) harmonic} if $f(X)$ is a $\mathbb{P}^x$-(local) martingale for all $x \in E$; 
it is called a {\it potential function of class $(D)$} if for any sequence of stopping times 
$(T_n)_{n} \nearrow \infty$, $\mathbb{E}^x[f(X_{T_n})] \to 0$.

\begin{thm} \label{thm 2.11}
If $u$ is a quasimartingale function for $X$, then $u$ may be decomposed as 
$u = h + v$, where $h$ is locally harmonic and $v$ is a potential function of class $(D)$.
\end{thm}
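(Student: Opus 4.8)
The plan is to reduce everything to a Riesz decomposition of the two finite excessive functions furnished by Corollary \ref{coro 2.7}. By that corollary I may write $u = u_1 - u_2$ with $u_1, u_2 \in E(\mathcal{U})$ real-valued. For each $i$ the pointwise decreasing limit $h_i := \lim_{t\to\infty} P_t u_i = \inf_t P_t u_i$ exists, is finite (since $h_i \le u_i < \infty$) and invariant ($P_s h_i = h_i$ for all $s$), while $p_i := u_i - h_i$ is excessive and purely excessive, in the sense that $\lim_{t\to\infty} P_t p_i = 0$. I then set $h := h_1 - h_2$ and $v := p_1 - p_2$, so that $u = h + v$, and it remains to identify the nature of each piece.

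First I would treat the harmonic part. Since $h_i$ is finite and invariant, $h_i(X_t)$ is a non-negative $\mathbb{P}^x$-martingale (indeed $\mathbb{E}^x[h_i(X_{t+s})\mid\mathcal{F}_t] = P_s h_i(X_t) = h_i(X_t)$), hence $h(X) = h_1(X) - h_2(X)$ is a martingale and $h$ is harmonic, a fortiori locally harmonic as required. Next I would exhibit $p_i(X)$ as the potential part in the Riesz decomposition of the supermartingale $u_i(X)$: here $h_i(X_t) = \lim_{s\to\infty}\mathbb{E}^x[u_i(X_s)\mid\mathcal{F}_t]$ is the martingale part (by the Markov property $\mathbb{E}^x[u_i(X_s)\mid\mathcal{F}_t] = P_{s-t}u_i(X_t)\downarrow h_i(X_t)$), and $p_i(X_t) = u_i(X_t) - h_i(X_t)$ is a non-negative supermartingale with $\mathbb{E}^x[p_i(X_t)] = P_t p_i(x) \to 0$. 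A non-negative supermartingale whose expectation tends to $0$ converges a.s. to $0$, so $p_i(X_t)\to 0$ $\mathbb{P}^x$-a.s. and therefore $v(X_{T_n})\to 0$ a.s. for every $T_n\nearrow\infty$.

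The substantive step is to upgrade this a.s. convergence to the $L^1$ convergence $\mathbb{E}^x[v(X_{T_n})]\to 0$ demanded by the class $(D)$ condition, which I would obtain from uniform integrability of $\{p_i(X_T): T \text{ stopping time}\}$. This is exactly where the finiteness of $u_i$ enters: the supermartingale $u_i(X)$ attached to an excessive function of a right process is of class (DL), so its potential part $p_i(X)$ is generated by a predictable increasing process $A^i$ via Doob--Meyer, and its total mass satisfies $\mathbb{E}^x[A^i_\infty] = p_i(x) < \infty$. Hence $A^i_\infty \in L^1(\mathbb{P}^x)$, the potential $p_i(X)$ is of class $(D)$, and $\{p_i(X_T)\}_T$ is uniformly integrable; combining this with the a.s. convergence gives $\mathbb{E}^x[p_i(X_{T_n})]\to 0$, and subtracting yields $\mathbb{E}^x[v(X_{T_n})]\to 0$.

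The hard part will be precisely this last passage, from the elementary a.s. convergence $v(X_{T_n})\to 0$ to uniform integrability along arbitrary stopping times: everything hinges on the integrability of the generating increasing processes, and the only input forcing it is the finiteness of $u_1,u_2$ provided by Corollary \ref{coro 2.7}. I would also note that the construction produces a genuinely harmonic $h$ (a true martingale), so the weaker ``locally harmonic'' in the statement leaves room for the more general situation where $u$ is only a local quasimartingale function, and that uniqueness of the splitting $u=h+v$ follows from the uniqueness of the Riesz decomposition of each $u_i(X)$.
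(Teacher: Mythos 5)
Your overall strategy --- reduce to Corollary \ref{coro 2.7} and then split each of the two finite excessive functions $u_1,u_2$ into a harmonic part plus a class $(D)$ potential --- is exactly the paper's route (the paper simply invokes the Riesz decomposition, Theorem (51.10) of \cite{Sh88}, for each $u_i$). But your construction of that splitting fails at the decisive step. The harmonic part in the Riesz decomposition is \emph{not} $h_i=\lim_{t\to\infty}P_tu_i$, and the two facts you use to promote $p_i=u_i-h_i$ to a class $(D)$ potential are both false: a non-negative right-continuous \emph{super}martingale need not be of class (DL) (that statement holds for submartingales), and a non-negative supermartingale with $\mathbb{E}^x[p_i(X_t)]\to 0$ need not be of class $(D)$. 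Concretely, let $X$ be Brownian motion on $E=\mathbb{R}^3\setminus\{0\}$ and $u(y)=|y|^{-1}$, a finite excessive function, hence a quasimartingale function with $u_1=u$, $u_2=0$. Here $P_tu\to 0$ pointwise, so your recipe gives $h=0$ and $v=u$. But if $T_n$ is the hitting time of the closed ball of radius $1/n$, then $T_n\nearrow\infty$ a.s., while $\mathbb{E}^x[u(X_{T_n})]=n\,\mathbb{P}^x(T_n<\infty)=u(x)$ for all $n>1/|x|$; so $v=u$ is not a potential function of class $(D)$. Equivalently, $u(X)$ is a strict positive local martingale, hence not of class (DL), and its compensator vanishes, so it could only be a class $(D)$ potential if it were identically zero. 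The correct decomposition in this example is $h=u$, $v=0$.

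The missing idea is that the harmonic part must absorb the local-martingale-but-not-martingale behaviour of $u_i(X)$, which the deterministic limit $\lim_{t}P_tu_i$ cannot detect (it sees only the invariant part). In Sharpe's Theorem (51.10) the harmonic minorant is produced from the hitting operators $P_{T_{K_n^c}}u_i$ along an exhaustion by compacts --- a r\'eduite / greatest harmonic minorant construction --- and it is exactly this $h_i$ whose composition with $X$ is in general only a \emph{local} martingale, which is why the theorem asserts ``locally harmonic'' rather than ``harmonic''. Your closing remark that your construction always yields a true martingale for $h$ should have been a warning sign: the example above shows this is not achievable. What is correct in your proposal, and does coincide with the paper's argument, is the reduction to the two excessive functions of Corollary \ref{coro 2.7} and the diagnosis that the entire difficulty lies in passing from the a.s.\ convergence $p_i(X_{T_n})\to 0$ to convergence of the expectations along arbitrary stopping times.
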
 

\begin{proof} 
It follows by Corollary \ref{coro 2.7} and \cite{Sh88}, Theorem (51.10).
\end{proof}

\vspace{0.3cm}

\noindent
{\bf The space of differences of excessive functions.}
We saw that the space of $\alpha$-quasimartingale functions of $X$ 
is in fact  the space of differences of real-valued $\alpha$-excessive functions. 
We end this section by collecting some useful observations on the dependence on 
$\alpha$ of the above mentioned spaces, in the same spirit as \cite{BeLu16}, Remark 2.1.

Recall that $\mathcal{U}$ is called $m$-transient 
($m$ is a fixed $\sigma$-finite sub-invariant measure for $\mathcal{U}$) 
if there exists $0 < f \in L^1(m)$ such that $Uf < \infty$ $m$-a.e.

\begin{prop} \label{prop 2.12} The following assertions hold.

\vspace{0.2cm}
i) For $\alpha, \beta \geq 0$, if $v \in E(\mathcal{U}_{\alpha})$ is real-valued such that $U_{\beta}v < \infty$, 
then $v$ is a difference of two real-valued $\beta$-excessive functions.
In particular, $bE(\mathcal{U_{\alpha}}) - bE(\mathcal{U}_{\alpha})$ is independent of $\alpha > 0$.

ii) Let $m$ be a $\sigma$-finite sub-invariant measure for $\mathcal{U}$.
Then:

\vspace{0.2cm}
\quad ii.1) If $\alpha, \beta \geq 0$, $v \in E(\mathcal{U}_{\alpha})$ and $U_{\beta} v < \infty$ $m$-a.e. then $v$ is $m$-a.e. (hence q.e.) the difference of two $\beta$-excessive functions.
In particular, the $L^p$-subspaces $L^p(m) \cap E(\mathcal{U}_{\alpha}) - L^p(m) \cap E(\mathcal{U}_{\alpha})$ are independent of $\alpha > 0$ for all $1 \leq p \leq \infty$.

\quad ii.2) If $\mathcal{U}$ is $m$-transient, then the $L^1$-subspaces $L^1(m) \cap E(\mathcal{U}_{\alpha}) - L^1(m) \cap E(\mathcal{U}_{\alpha})$ are independent of $\alpha \geq 0$.
\end{prop}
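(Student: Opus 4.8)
The plan is to manufacture, for each real-valued $v\in E(\mathcal{U}_\alpha)$ and each $\beta$, an explicit pair of $\beta$-excessive functions with difference $v$, and then to read off all three independence assertions from the mapping properties of the kernels $U_\beta$. The only genuine case is $\beta<\alpha$: when $\beta\ge\alpha$ one has $e^{-\beta t}P_tv=e^{-(\beta-\alpha)t}e^{-\alpha t}P_tv\le v$ and $e^{-\beta t}P_tv\to v$, so $v$ itself lies in $E(\mathcal{U}_\beta)$ and $v=v-0$ suffices. Fix then $\beta<\alpha$ and set $v_2:=(\alpha-\beta)U_\beta v$ and $w:=v+v_2$. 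Since $U_\beta v<\infty$ and $U_\beta$ maps positive functions to $\beta$-excessive ones, $v_2$ is a finite $\beta$-excessive function. To see that $w$ is $\beta$-excessive I would use the resolvent identity $U_{\beta+\gamma}U_\beta=\gamma^{-1}(U_\beta-U_{\beta+\gamma})$, which gives $\gamma U_{\beta+\gamma}w=\gamma U_{\beta+\gamma}v+(\alpha-\beta)(U_\beta v-U_{\beta+\gamma}v)$ and hence
\[
w-\gamma U_{\beta+\gamma}w=v-(\beta+\gamma-\alpha)\,U_{\beta+\gamma}v .
\]
Now invoke the characterization of $E(\mathcal{U}_\alpha)$: $\eta U_{\alpha+\eta}v\le v$ for all $\eta>0$ and $\eta U_{\alpha+\eta}v\uparrow v$ as $\eta\to\infty$. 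Taking $\eta=\beta+\gamma-\alpha$ shows the right-hand side is $\ge0$ for every $\gamma>0$ (for $\gamma\ge\alpha-\beta$ because $\eta U_{\alpha+\eta}v\le v$, and for $0<\gamma<\alpha-\beta$ because then $\beta+\gamma-\alpha<0$), so $w$ is $\beta$-supermedian; letting $\gamma\to\infty$ the right-hand side tends to $0$, so $\gamma U_{\beta+\gamma}w\uparrow w$ and $w\in E(\mathcal{U}_\beta)$. This proves i), since $v=w-v_2$.

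For the ``in particular'' part of i) I would only note that $U_\beta$ is bounded on bounded functions ($U_\beta 1\le\beta^{-1}$), so $v\in bE(\mathcal{U}_\alpha)$ forces $v_2,w\in bE(\mathcal{U}_\beta)$; swapping $\alpha$ and $\beta$ gives the claimed $\alpha$-independence. Part ii.1) is the same construction carried out off an exceptional set. The hypothesis $U_\beta v<\infty$ $m$-a.e.\ means the $\beta$-excessive function $U_\beta v$ is finite on the absorbing set $A:=[U_\beta v<\infty]$, which has full $m$-measure; on $A$ the formulas above define finite $\beta$-excessive functions with $v=w-v_2$. The passage from ``$m$-a.e.'' to ``q.e.'' is the standard fact that a $\beta$-excessive function finite $m$-a.e.\ is finite q.e.\ (its infinity set is $m$-polar). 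For the $L^p$-statement I would use that sub-invariance of $m$ gives $mU_\beta\le\beta^{-1}m$ together with $U_\beta 1\le\beta^{-1}$, hence by interpolation $U_\beta$ is bounded on every $L^p(m)$; thus $v\in L^p(m)$ yields $v_2,w\in L^p(m)\cap E(\mathcal{U}_\beta)$, and the symmetric argument gives independence of $\alpha>0$.

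Finally, ii.2) adds the endpoint $\alpha=0$; write $D_\alpha$ for the difference space in its statement. For $\alpha,\beta>0$ the space is already pinned down by ii.1), and since every excessive function is $\beta$-excessive one has $L^1(m)\cap E(\mathcal{U})\subset L^1(m)\cap E(\mathcal{U}_\beta)$, so $D_0\subset D_\beta$ for free. The substance is the reverse inclusion: given $v\in L^1(m)\cap E(\mathcal{U}_\beta)$ one wants $v=w-v_2$ with $v_2=\beta Uv$ and $w=v+\beta Uv$, now required to lie in $L^1(m)\cap E(\mathcal{U})$. Here $m$-transience enters to guarantee that the $0$-potential $Uv$ is finite $m$-a.e., so that $v_2$ and $w$ are honest real-valued excessive functions; this is where the reference function $0<f\in L^1(m)$ with $Uf<\infty$ $m$-a.e.\ is used, by dominating $v$ (or suitable truncations) and propagating finiteness of the potential. \textbf{The main obstacle is precisely this endpoint:} unlike the case $\beta>0$, the kernel $U=U_0$ is no longer bounded on $L^1(m)$, so the $L^1$-integrability of $Uv$ cannot come from sub-invariance and must be wrung out of transience alone. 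Controlling $\|Uv\|_{L^1(m)}$ — equivalently, upgrading $w,v_2$ from ``$m$-a.e.\ finite'' to genuinely integrable — is the crux on which independence at $\alpha=0$ rests, and it is there that I would concentrate the work, reducing matters to the reference potential and a monotone/localization argument.
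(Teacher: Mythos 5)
Your parts i) and ii.1) are correct and follow the paper's proof essentially verbatim: the paper uses exactly the auxiliary function $w := v + (\alpha-\beta)U_{\beta}v$ and merely declares it ``straightforward to check'' that $w$ is $\beta$-excessive, so your resolvent-identity computation $w - \gamma U_{\beta+\gamma}w = v - (\beta+\gamma-\alpha)U_{\beta+\gamma}v$ is a legitimate way of supplying that check (granting the standard equivalence of the resolvent and semigroup characterizations of excessiveness, which the paper itself invokes elsewhere via \cite{BeBo04}), and the contraction properties of $\beta U_{\beta}$ on $L^{\infty}$ and on $L^{p}(m)$ dispose of both ``in particular'' clauses as you describe.

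The gap is ii.2), which you explicitly leave open. The ingredient the paper uses to close it is the characterization of $m$-transience from \cite{BeCiRo15}: $\mathcal{U}$ is $m$-transient if and only if $Uf<\infty$ $m$-a.e.\ for \emph{every} $0\le f\in L^{1}(m)$, not merely for one reference function. Granting that, ii.2) reduces at once to ii.1) with $\beta=0$: for $v\in L^{1}(m)\cap E(\mathcal{U}_{\alpha})$ one gets $Uv<\infty$ $m$-a.e., hence $v=(v+\alpha Uv)-\alpha Uv$ $m$-a.e. Your proposed route --- dominating $v$ (or truncations of it) by the reference function and ``propagating finiteness'' --- does not work as stated: a general $0\le v\in L^{1}(m)$ need not be dominated by any multiple of the reference $f$, and truncation gives nothing since $U(v\wedge n)\uparrow Uv$; the missing idea is precisely the cited equivalence. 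Your separate concern about controlling $\|Uv\|_{L^{1}(m)}$ rather than mere a.e.\ finiteness is a fair reading of the statement of ii.2), and it is worth noting that the paper's own proof only establishes a.e.\ finiteness of $Uv$ and says nothing about its integrability either; so on that particular point you are not behind the paper, but you do not resolve it, and as it stands your proposal proves ii.2) for neither interpretation.
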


\begin{proof}
i). Of course, we need to consider only the case $\beta < \alpha$.
Let $w := v + (\alpha - \beta)U_{\beta}v$. 
Then by hypothesis, $w < \infty$ and it is straightforward to check that $w$ is $\beta$-excessive.
Hence $v = w - (\alpha - \beta)U_{\beta}v \in E(\mathcal{U}_{\beta}) - E(\mathcal{U}_{\beta})$.

The proof of ii.1) is similar to the one for assertion i).

ii.2). By ii.1), it is sufficient to show that if $0 \leq v \in L^1(m)$ then $Uv < \infty$ $m$-a.e. 
But this is true by a characterization of $m$-transience; see \cite{BeCiRo15}.
\end{proof}

\section{Quasimartingale functions of transformed Markov processes}

As in Section 2, $X = (\Omega, \mathcal{F}, \mathcal{F}_t, X_t, \mathbb{P}^x)$ is a right Markov process on $E$. 
Before we move on, we would like to remark that although in Section 1 
we considered only $\mathcal{B}$-measurable functions, 
the results obtained there remain valid for functions measurable with respect to $\mathcal{B}^u$, 
the $\sigma$-algebra of all universally measurable sets in $E$.

\vspace{0.3cm}

\noindent
{\bf Quasimartingales under killing.} 
Let $M:=(M_t)_{t \geq 0}$  be a right continuous decreasing multiplicative functionals $(MF)$ of $X$ 
and $E_M$ be the set of permanent points for $M$, $E_M := \{ x \in E : \mathbb{P}^x(M_0 = 1) = 1 \}$.
As in \cite{Sh88}, Proposition 56.5, define the kernels on $p\mathcal{B}^u$ 
by setting for $f \in p\mathcal{B}^u$, $\alpha \geq 0$, and $t \geq 0$
$$
P_M^{\alpha} f(x) := \left\{ \begin{array}{l}
\mathbb{E}^x\int_{0}^{\infty} e^{-\alpha t} f(X_t)dM_t, \; x \in E_M\\[3mm] 
f(x), \quad \quad x \in E \setminus E_M,
\end{array}\right.
$$

\qquad \qquad \qquad \quad \; $Q_t f(x) := \mathbb{E}^x\{f(X_t)M_t\}$,

\vspace{0.2cm}
\qquad \qquad \qquad \quad \; $W_{\alpha}f(x) := \mathbb{E}^x\int_{0}^{\infty}e^{-\alpha t}M_t f(X_t)dt$.

\vspace{0.2cm}
It is well known that $(Q_t)_{t}$ is a sub-Markovian semigroup of kernels on 
$(E, \mathcal{B}^u)$ whose resolvent is $\mathcal{W} = (W_{\alpha})_{\alpha} \geq 0$.

\begin{prop} \label{prop 3.1} Let $u$ be a real-valued $\mathcal{B}^u$-measurable function such that 
$P_t|u| < \infty$ for all $t \geq 0$. Then for all $x \in E$,
$$
{Var}^{\mathbb{P}^x}(Mu(X)) = V^{(Q_t)}u(x).
$$
\end{prop}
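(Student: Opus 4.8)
The plan is to follow the computation in the proof of Theorem~\ref{thm 2.6}~i) almost verbatim, the only genuinely new ingredient being the treatment of the multiplicative weight in the process $Z_t := M_t u(X_t)$. First I would record the elementary bounds that make everything well defined: since $M$ is a decreasing multiplicative functional one has $0 \leq M_t \leq 1$, so that $Q_t|u| \leq P_t|u| < \infty$ and the process $(Z_t)_t$ is integrable. The same bound, together with subadditivity, shows that every term $Q_{t_{i-1}}|u - Q_{t_i - t_{i-1}}u|(x)$ occurring in $V^{(Q_t)}_\tau(u)$ is finite, because $Q_{t_{i-1}}|u - Q_{t_i - t_{i-1}}u| \leq P_{t_{i-1}}|u| + P_{t_i}|u| < \infty$.

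The heart of the argument is the computation of the conditional expectation $\mathbb{E}^x[Z_{t_i} \mid \mathcal{F}_{t_{i-1}}]$ along a partition $\tau : 0 = t_0 \leq t_1 \leq \cdots \leq t_n$. Writing $s := t_i - t_{i-1}$ and using the multiplicative property $M_{t_i} = M_{t_{i-1}}\cdot(M_s \circ \theta_{t_{i-1}})$, I would factor $Z_{t_i} = M_{t_{i-1}}\cdot\big[(M_s\, u(X_s))\circ\theta_{t_{i-1}}\big]$. Since $M_{t_{i-1}}$ is $\mathcal{F}_{t_{i-1}}$-measurable it may be pulled out of the conditional expectation, and the Markov property applied to the post-$t_{i-1}$ functional, combined with the defining identity $\mathbb{E}^y[M_s\, u(X_s)] = Q_s u(y)$, gives
$$
\mathbb{E}^x[Z_{t_i} \mid \mathcal{F}_{t_{i-1}}] = M_{t_{i-1}}\, Q_{t_i - t_{i-1}} u(X_{t_{i-1}}),
$$
whence $Z_{t_{i-1}} - \mathbb{E}^x[Z_{t_i} \mid \mathcal{F}_{t_{i-1}}] = M_{t_{i-1}}\,(u - Q_{t_i - t_{i-1}}u)(X_{t_{i-1}})$.

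With this identity the remainder is routine. Because $M_{t_{i-1}} \geq 0$, the absolute value of the increment equals $M_{t_{i-1}}\,|u - Q_{t_i - t_{i-1}}u|(X_{t_{i-1}})$, and taking $\mathbb{P}^x$-expectation converts each such term into $Q_{t_{i-1}}|u - Q_{t_i - t_{i-1}}u|(x)$ by the very definition $Q_{t_{i-1}}g(x) = \mathbb{E}^x[M_{t_{i-1}} g(X_{t_{i-1}})]$; likewise $\mathbb{E}^x|Z_{t_n}| = Q_{t_n}|u|(x)$. Summing over $i$ gives $\mathbb{E}^x\{\sum_i |\mathbb{E}^x[Z_{t_i} - Z_{t_{i-1}} \mid \mathcal{F}_{t_{i-1}}]| + |Z_{t_n}|\} = V^{(Q_t)}_\tau(u)(x)$, and taking the supremum over all partitions $\tau$ yields the stated equality. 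The step I expect to require the most care is the conditional-expectation computation, where one must invoke the precise form of the multiplicative property (including the shift operators $\theta_t$) and verify that the Markov property applies to the post-$t_{i-1}$ factor; once this is in place the proof is a direct transcription of that of Theorem~\ref{thm 2.6}~i) with $P_t$ replaced by $Q_t$.
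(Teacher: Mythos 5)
Your proposal is correct and follows essentially the same route as the paper: the key identity $\mathbb{E}^x[M_{t_i}u(X_{t_i})\mid\mathcal{F}_{t_{i-1}}]=M_{t_{i-1}}Q_{t_i-t_{i-1}}u(X_{t_{i-1}})$, obtained from the multiplicative property and the Markov property, is exactly the step the paper uses, after which each term collapses to $Q_{t_{i-1}}|u-Q_{t_i-t_{i-1}}u|(x)$ by the definition of $Q_t$. Your additional remarks on integrability via $0\leq M_t\leq 1$ are a harmless (and welcome) elaboration of what the paper leaves implicit.
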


\begin{proof} 
For $x \in E$,
$$
{Var}^{\mathbb{P}^x}(Mu(X)) = \mathop{\sup}\limits_{\tau} \mathbb{E}^x \left\{ \mathop{\sum}\limits_{i=1}^{n} |\mathbb{E}^x[M_{t_{i-1}} u(X_{t_{i-1}}) - M_{t_i}u(X_{t_i})|\mathcal{F}_{t_{i-1}}]| + M_{t_n}|u|(X_{t_n}) \right\}
$$

\vspace{0.2cm}
\qquad \qquad \qquad \; $= \mathop{\sup}\limits_{\tau} \mathbb{E}^x\left\{ \mathop{\sum}\limits_{i=1}^{n} |M_{t_{i-1}} u(X_{t_{i-1}}) - M_{t_{i-1}}Q_{t_i - t_{i-1}} u(X_{t_{i-1}})| + M_{t_n}|u|(X_{t_n}) \right\}$

\vspace{0.2cm}
\qquad \qquad \qquad \; $= \mathop{\sup}\limits_{\tau} \left\{ \mathop{\sum}\limits_{i=1}^{n} \mathbb{E}^x[M_{t_{i-1}} | u - Q_{t_i - t_{i-1}}u| (X_{t_{i-1}})] + Q_{t_n}|u|(x) \right\}$

\vspace{0.2cm}
\qquad \qquad \qquad \; $= \mathop{\sup}\limits_{\tau} \left\{ \mathop{\sum}\limits_{i=1}^{n} Q_{t_{i-1}} |u - Q_{t_i - t_{i-1}} u|(x) + Q_{t_n}|u|(x) \right\} $

\vspace{0.2cm}
\qquad \qquad \qquad \; $= V^{(Q_t)}u(x)$.
\end{proof}

\begin{coro} \label{coro 3.2} 
Let $u$ be a real-valued $\mathcal{B}^u$-measurable function. 
If $\alpha \geq 0$, then $u$ is an $\alpha$-quasimartingale function 
if and only if it is the difference of two real-valued $\alpha$-excessive functions.
\end{coro}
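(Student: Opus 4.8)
The plan is to reduce the $\alpha$-discounted problem to the undiscounted one treated in Section 2, by recognizing that the discount factor $e^{-\alpha t}$ is exactly the multiplicative functional $M_t := e^{-\alpha t}$. This $M$ is right-continuous, decreasing, and has permanent-point set $E_M = E$ (since $M_0 = 1$ identically), so all the constructions preceding Proposition \ref{prop 3.1} apply: its associated semigroup is $Q_t f(x) = \mathbb{E}^x[f(X_t)M_t] = e^{-\alpha t} P_t f(x) = P_t^{\alpha} f(x)$ and its resolvent is $W_\beta = U_{\alpha+\beta}$, $\beta \geq 0$. In particular the killed process $X^M$ is again a right process on $E$, and its cone of excessive functions coincides with $E(\mathcal{U}_\alpha)$, because the resolvent of $(P_t^{\alpha})_t$ is precisely $(U_{\alpha+\beta})_\beta$.

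First I would record that $M_t u(X_t) = e^{-\alpha t} u(X_t)$, so that, by Definition \ref{defi 2.3}, $u$ is an $\alpha$-quasimartingale function for $X$ if and only if ${\rm Var}^{\mathbb{P}^x}(Mu(X)) < \infty$ for every $x \in E$. By Proposition \ref{prop 3.1} this variation equals $V^{(Q_t)}(u)(x) = V^{\alpha}(u)(x)$; the hypothesis $P_t|u| < \infty$ needed there is harmless, since in the nontrivial implication it follows from the finiteness of the variation (as in Remark \ref{rem 2.4}, applied to $e^{-\alpha t}u(X_t)$), and in the converse implication from $P_t|u| \leq e^{\alpha t}(u_1 + u_2) < \infty$ once $u = u_1 - u_2$ with $u_1, u_2$ real-valued $\alpha$-excessive. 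On the other hand, applying Theorem \ref{thm 2.6} i) to the right process $X^M$ gives ${\rm Var}^{\mathbb{P}^x}(u(X^M)) = V^{(Q_t)}(u)(x)$ as well, so both variations coincide.

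Next I would run the full Section 2 machinery for $X^M$ in place of $X$: since $X^M$ is a right process with transition function $(Q_t)_t$ and resolvent $\mathcal{W}$, Corollary \ref{coro 2.7} applies verbatim and yields that $u$ is a quasimartingale function for $X^M$ if and only if $u = u_1 - u_2$ for some real-valued $u_1, u_2 \in E(\mathcal{W})$. Combining this with the variation identity of the previous step, the conditions ``$u$ is an $\alpha$-quasimartingale function for $X$'' and ``$u$ is a quasimartingale function for $X^M$'' are literally the same (both amount to $V^{\alpha}(u)(x) < \infty$ for all $x$), while $E(\mathcal{W}) = E(\mathcal{U}_\alpha)$; this gives exactly the claimed equivalence. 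The passage from $\mathcal{B}$- to $\mathcal{B}^u$-measurable $u$ is permitted by the remark opening Section 3.

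The main obstacle is not computational but structural: one must justify that Section 2 transfers wholesale to $X^M$, namely that $X^M$ is genuinely a right Markov process on the same Lusin space $E$, that $E(\mathcal{W}) = E(\mathcal{U}_\alpha)$, and that the admissible-partition arguments underlying Theorem \ref{thm 2.6} iii) (hence Corollary \ref{coro 2.7}) are insensitive to replacing $(P_t)_t$ by $(P_t^{\alpha})_t$. Once this ``change of process'' is in place, Corollary \ref{coro 3.2} follows directly from Proposition \ref{prop 3.1} and Corollary \ref{coro 2.7}, with no further analysis required.
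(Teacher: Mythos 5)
Your argument is correct and is exactly the route the paper intends: Corollary \ref{coro 3.2} is stated without proof precisely because it follows from Proposition \ref{prop 3.1} applied to the multiplicative functional $M_t=e^{-\alpha t}$ (so that $Q_t=P_t^\alpha$ and the variation of $(e^{-\alpha t}u(X_t))_t$ is $V^\alpha(u)$) together with Theorem \ref{thm 2.6}/Corollary \ref{coro 2.7} run for the semigroup $(P_t^\alpha)_t$, whose excessive cone is $E(\mathcal{U}^\alpha)$. Your added remarks on why the hypothesis $P_t|u|<\infty$ is harmless in both directions are accurate and fill in the only detail the paper leaves implicit.
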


\vspace{0.3cm}

If $M$ is exact, then $E_M$ is finely open and the restriction 
$Q_t|_{E_M}$ of $(Q_t)_{t \geq 0}$ to $E_M$ is the transition function of a right Markov process $(X_t^M)_{t \geq 0}$ on $E_M$; see \cite{Sh88}, Chapter VII.

\vspace{0.3cm}

\begin{coro} \label{coro 3.3}
Assume that $M$ is perfect. 
Then the following assertions hold.

i) Let $f$ be a real-valued $\mathcal{B}^u$-measurable function such that 
$U_{\alpha}|f| < \infty$ for some $\alpha \geq 0$ and set $u := W_{\alpha}f$.
Then $u$ is an $\alpha$-quasimartingale function for $X$.

ii) Let $u$ be a real-valued $\mathcal{B}^u$-measurable function, 
such that $Q_t|u| < \infty$ for all $t \geq 0$. 
Then for all $\alpha \geq 0$
$$
V^{(Q_t^{\alpha})}(u) = \left\{ \begin{array}{l}
V^{(Q_t^{\alpha}|_{E_M})}(u|_{E_M})(x), \; x \in E_M\\[3mm] 
0, \quad \quad x \in E \setminus E_M.
\end{array}\right.
$$
In particular, if $u$ is finely continuous then for all $\alpha \geq 0$, 
$(e^{-\alpha t}M_t u(X_t))_t$ is a $\mathbb{P}^x$-quasimartingale for all $x \in E$ 
if and only if $u|_{E_M}$ is an $\alpha$-quasimartingale function for $X^M$.
\end{coro}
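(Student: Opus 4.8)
The plan is to reduce the claim, via Corollary \ref{coro 3.2}, to showing that $u=W_\alpha f$ is the difference of two real-valued $\alpha$-excessive functions of $X$. Splitting $f=f^+-f^-$, it suffices to treat $0\le g$ with $U_\alpha g<\infty$ and prove that $W_\alpha g$ is such a difference. The key identity is $W_\alpha g = U_\alpha g - h_g$, where $h_g:=U_\alpha g-W_\alpha g=\mathbb{E}^{\cdot}\int_0^\infty e^{-\alpha t}(1-M_t)g(X_t)\,dt\ge 0$ is finite since $h_g\le U_\alpha g\le U_\alpha|f|<\infty$. As $U_\alpha g\in E(\mathcal{U}^\alpha)$, everything comes down to proving $h_g\in E(\mathcal{U}^\alpha)$.

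\textbf{Excessiveness of $h_g$ and conclusion of (i).} I would obtain the $\alpha$-supermedian property of $h_g$ by a direct computation. Using the multiplicative property $M_v=M_s\,(M_{v-s}\circ\theta_s)$ together with $M_s\le M_0\le 1$, one gets $M_{v-s}\circ\theta_s=M_v/M_s\ge M_v$ on $\{M_s>0\}$; comparing the two potentials after a shift by $s$ then yields $W_\alpha g-P_s^\alpha W_\alpha g\le \mathbb{E}^{\cdot}\int_0^s e^{-\alpha r}g(X_r)\,dr=U_\alpha g-P_s^\alpha U_\alpha g$, i.e. $P_s^\alpha h_g\le h_g$. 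Letting $s\downarrow 0$, the right-continuity of $M$ and dominated convergence (the integrand is dominated by $g(X_v)$, which is $U_\alpha g$-integrable) give $P_s^\alpha h_g\uparrow h_g$, so $h_g$ is $\alpha$-excessive by \cite{BeBo04}, Corollary 1.3.4. Equivalently, writing $1-M_t=\int_{(0,t]}dK_s$ with $K:=1-M$ increasing and applying Fubini exhibits $h_g$ as the $\alpha$-potential $\mathbb{E}^{\cdot}\int_{(0,\infty)}e^{-\alpha s}U_\alpha g(X_s)\,dK_s$, which is manifestly excessive and avoids dividing by $M_s$ off $E_M$. Either way $W_\alpha g=U_\alpha g-h_g$ is a difference of two real-valued $\alpha$-excessive functions, and Corollary \ref{coro 3.2} finishes (i).

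\textbf{The identity in (ii).} I distinguish the two regimes. For $x\in E\setminus E_M$, the standard properties of perfect multiplicative functionals give $M_t=0$ for all $t\ge 0$ $\mathbb{P}^x$-a.s., whence $Q_t^\alpha(x,\cdot)\equiv 0$ and, since the $t_0=0$ term carries the factor $M_0$, every term of $V_\tau^{(Q_t^\alpha)}(u)(x)$ vanishes, so $V^{(Q_t^\alpha)}(u)(x)=0$. For $x\in E_M$ I would invoke the permanence property $\{M_t>0\}\subset\{X_t\in E_M\}$ (valid up to $\mathbb{P}^x$-negligible sets for perfect functionals; see \cite{Sh88}, Ch.~VII), which gives $Q_t^\alpha f(x)=Q_t^\alpha(1_{E_M}f)(x)=(Q_t^\alpha|_{E_M})(f|_{E_M})(x)$. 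Consequently each summand $Q^\alpha_{t_{i-1}}|u-Q^\alpha_{t_i-t_{i-1}}u|(x)$ coincides with the corresponding summand computed with the restricted kernels and $u|_{E_M}$, and taking the supremum over partitions $\tau$ yields $V^{(Q_t^\alpha)}(u)(x)=V^{(Q_t^\alpha|_{E_M})}(u|_{E_M})(x)$.

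\textbf{The equivalence and the main obstacle.} Applying Proposition \ref{prop 3.1} to the decreasing multiplicative functional $M^\alpha_t:=e^{-\alpha t}M_t$ gives ${Var}^{\mathbb{P}^x}((e^{-\alpha t}M_tu(X_t))_t)=V^{(Q_t^\alpha)}(u)(x)$, which by the identity just established is finite for every $x\in E$ if and only if $V^{(Q_t^\alpha|_{E_M})}(u|_{E_M})(x)<\infty$ for every $x\in E_M$ (the values off $E_M$ being $0$). Since $Q_t^\alpha|_{E_M}$ is the transition function of $X^M$, Proposition \ref{prop 3.1} (or Theorem \ref{thm 2.6} i)) applied to $X^M$ with the trivial functional identifies the latter with ${Var}^{\mathbb{P}^x}((e^{-\alpha t}u|_{E_M}(X^M_t))_t)$; its finiteness for all $x\in E_M$ is precisely the assertion that $u|_{E_M}$ is an $\alpha$-quasimartingale function for $X^M$. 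Fine continuity of $u$, inherited by $u|_{E_M}$ as $E_M$ is finely open, is what makes the Section~2 characterizations applicable on $E_M$. The only genuinely delicate step is the $\alpha$-excessiveness of $h_g$ in part (i): the supermedian inequality is short, but upgrading it to excessiveness uniformly on $E$ (including off $E_M$) is where the potential representation via $dK_s$ is the safest route; in part (ii) the sole non-formal input is the permanence property $\{M_t>0\}\subset\{X_t\in E_M\}$, which is classical for perfect functionals.
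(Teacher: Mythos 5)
Your route is essentially the paper's own: for (i) the decomposition $W_\alpha f = U_\alpha f - P_M^\alpha U_\alpha f$ (your $h_g$ is exactly $P_M^\alpha U_\alpha g$), and for (ii) the dichotomy $M_0=0$ $\mathbb{P}^x$-a.s.\ off $E_M$ versus $\{M_t>0\}\subset\{X_t\in E_M\}$ on $E_M$, followed by Proposition \ref{prop 3.1} applied both to $e^{-\alpha t}M_t$ and to the trivial functional of $X^M$. Part (ii) is correct and coincides with the paper's argument. The difference in (i) is that the paper disposes of the whole point by citing \cite{Sh88}, Proposition 56.5, for the identity $u=U_\alpha f-P_M^\alpha U_\alpha f$ and for $P_M^\alpha U_\alpha f\in E(\mathcal{U}^\alpha)$, whereas you prove these by hand. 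Your direct derivation of the supermedian inequality $P_s^\alpha h_g\le h_g$ from $M_{v-s}\circ\theta_s\ge M_v$ is correct.

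The step you yourself flag as delicate is, however, where the proposal has real problems. First, the ``safest route'' is not valid as stated: the potential $\mathbb{E}^{\cdot}\int_{(0,\infty)}e^{-\alpha s}U_\alpha g(X_s)\,dK_s$ with $K:=1-M$ is \emph{not} manifestly excessive, because $K$ is not an additive functional; one has $K_{t+s}=K_t+M_t\,(K_s\circ\theta_t)$, so $d_s(K_s\circ\theta_t)=M_t^{-1}d_sK_{t+s}$ on $\{M_t>0\}$ and the factor $M_t\le 1$ enters on the wrong side of the inequality needed for the standard ``potential of an increasing AF is supermedian'' argument. The supermedian property of this potential is true, but only because it equals $U_\alpha g-W_\alpha g$, i.e.\ it must come from your first computation, not from the $dK_s$ representation. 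Second, upgrading supermedian to excessive requires $M_{v-s}\circ\theta_s\to M_v$ as $s\downarrow 0$; right-continuity of $t\mapsto M_t$ alone does not give this. On $E_M$ it does follow from perfectness together with $M_0=1$ (so $M_{v-s}\circ\theta_s=M_v/M_s\to M_v$), but off $E_M$ this is precisely an exactness issue and can fail for a merely perfect $M$. Neither defect is fatal to the corollary --- one can quote Sharpe as the paper does, or note that supermedianness already yields the variation bound via Theorem \ref{thm 2.6}, ii) --- but the justification as written does not close the excessiveness step needed to invoke Corollary \ref{coro 3.2}.
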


\begin{proof}
 i). Clearly, it is enough to consider $f \geq 0$. Then, the assertion follows since $u = U_{\alpha}f - P_M^{\alpha}U_\alpha f$ and $P_M^{\alpha}U_{\alpha}f \in E(\mathcal{U}^{\alpha})$; see e.g. \cite{Sh88}, Proposition 56.5.

ii). The first assertion follows easily since $Q_t f \equiv 0$ on $E_M$ and $M_t \equiv 0$ $\mathbb{P}^x$-a.s. for $x \in E \setminus E_M$, while the second one is entailed by Proposition \ref{prop 3.1}.
\end{proof}

\vspace{0.3cm}
\noindent
{\bf Quasimartingales under time change.} 
Let $A$ be a perfect continuous additive functional of $X$ ($AF$) and $F = {supp}(A)$ its fine support. 
Then the inverse $\tau_t$ of $A_t$ defined
$$
\tau_t(\omega) := \inf\{ s : A_s(\omega) > t \},
$$
is a stopping time for each $t \geq 0$ and the process $(\tau_t)_{t \geq 0}$ is right continuous.
Set $Y_t(\omega) := X_{\tau_t(\omega)}(\omega)$, $\mathcal{G}_t 
:= \mathcal{F}_{\tau_t}$, $t \geq 0$, $\mathcal{G} = \mathop{\bigcup}\limits_{t \geq 0} \mathcal{G}_t$. 
Then the process $Y = (\Omega, \mathcal{G}, \mathcal{G}_t, Y_t, \mathbb{P}^x)$ 
is a right process on $F$ and is called the time changed process of $X$ w.r.t. $A$; 
see \cite{Sh88}, Chapter VII (more precisely, Theorem 65.9). 
We denote its resolvent by $\widehat{\mathcal{U}}$.

\begin{coro} \label{coro 3.4} 
If $u$ is a quasimartingale function for $X$ then $u|_{F}$ is a quasimartingale function for $Y$. 
Conversely, if $F = E$, then any quasimartingale function for $Y$ is a quasimartingale function for $X$.
\end{coro}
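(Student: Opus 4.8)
The plan is to reduce both implications to Corollary \ref{coro 2.7}, which identifies the quasimartingale functions of a right process with the differences of two finite (real-valued) excessive functions of its resolvent, and then to combine this with the classical fact that excessiveness is preserved under time change. Throughout I work with ordinary ($\alpha=0$) excessive functions, since the statement concerns plain quasimartingale functions; this avoids the difficulty that, for $\alpha>0$, the discount $e^{-\alpha t}$ is turned into the path-dependent factor $e^{-\alpha A_s}$ under the time change.

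For the direct implication I would start from a quasimartingale function $u$ for $X$ and use Corollary \ref{coro 2.7} to write $u=u_1-u_2$ on $E$ with $u_1,u_2\in E(\mathcal{U})$ real-valued. The key claim is that $u_i|_F\in E(\widehat{\mathcal{U}})$ for $i=1,2$. To see this, note that $(u_i(X_t))_{t\geq 0}$ is a nonnegative right-continuous $\mathcal{F}_t$-supermartingale; since $(\tau_t)_{t\geq 0}$ is an increasing, right-continuous family of stopping times with $\mathcal{G}_t=\mathcal{F}_{\tau_t}$, the optional sampling theorem for nonnegative supermartingales gives that $(u_i(Y_t))_{t\geq 0}=(u_i(X_{\tau_t}))_{t\geq 0}$ is a nonnegative right-continuous $\mathcal{G}_t$-supermartingale. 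By the correspondence between excessive functions and right-continuous supermartingales (cf.\ \cite{LG06}, Proposition 13.7.1 and Theorem 14.7.1, or \cite{Sh88}, Chapter VII) this says exactly that $u_i|_F$ is excessive for $Y$, and it is finite because $u_i$ is. Hence $u|_F=u_1|_F-u_2|_F$ is a difference of two finite functions in $E(\widehat{\mathcal{U}})$, and Corollary \ref{coro 2.7} applied to $Y$ yields that $u|_F$ is a quasimartingale function for $Y$.

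For the converse I would exploit the hypothesis $F=E$. Since $F=\mathrm{supp}(A)=E$, the additive functional $A$ is strictly increasing on $[0,\xi)$, so its inverse $\tau$ is continuous and strictly increasing, with $A_{\tau_t}=t$ and $\tau_{A_s}=s$, whence $X_s=Y_{A_s}$. Thus $X$ is itself the time-changed process of $Y$ associated with a perfect continuous additive functional $\tau$ of $Y$ whose fine support is all of $E$. Applying the already-proved direct implication with the roles of $X$ and $Y$ interchanged gives $E(\widehat{\mathcal{U}})\subseteq E(\mathcal{U})$, while the direct implication for $X$ with $F=E$ gives $E(\mathcal{U})\subseteq E(\widehat{\mathcal{U}})$; hence the two cones of real-valued excessive functions coincide. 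Consequently, if $u$ is a quasimartingale function for $Y$, then by Corollary \ref{coro 2.7} one may write $u=v_1-v_2$ with $v_1,v_2\in E(\widehat{\mathcal{U}})=E(\mathcal{U})$ real-valued, and a final application of Corollary \ref{coro 2.7} shows that $u$ is a quasimartingale function for $X$.

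The main obstacle is the excessive-function transfer under time change. For the direct implication the optional sampling step must be carried out for nonnegative, possibly unbounded supermartingales along $(\tau_t)$, and one must check that the time-changed process is genuinely right-continuous so that $\lim_{t\downarrow 0}\widehat{P}_t(u_i|_F)=u_i|_F$ rather than merely a supermedian majorant; this is where the right-continuity of $Y$ and the continuity of $\tau$ are used. For the converse the delicate point is to confirm that, when $F=E$, the process $X$ is recovered from $Y$ as an honest time change by a continuous additive functional $\tau$ of $Y$, so that the direct implication can be reapplied; this is precisely what fails when $F\subsetneq E$, and it is what forces the extra hypothesis $F=E$.
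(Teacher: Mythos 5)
Your proposal is correct and takes essentially the same route as the paper: both reduce the statement via Corollary \ref{coro 2.7} to the fact that real-valued excessive functions of $X$ restrict to excessive functions of the time-changed process $Y$. The only difference is that the paper simply cites Sharpe (65.12 for $E(\mathcal{U})|_F \subset E(\widehat{\mathcal{U}})$ and 65.13 for $E(\mathcal{U}) = E(\widehat{\mathcal{U}})$ when $F=E$), whereas you supply self-contained and sound proofs of these two facts, by optional sampling along $(\tau_t)$ for the inclusion and by reversing the roles of $X$ and $Y$ for the equality.
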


\begin{proof} 
If $u$ is a quasimartingale function for $X$, 
then by Corollary \ref{coro 2.7}, $u = u_1-u_2$ with $u_1, u_2 \in E(\mathcal{U})$ and real-valued. 
But $E(\mathcal{U})|_F \subset E(\widehat{\mathcal{U}})$ (see \cite{Sh88}, 65.12), so $u|_F$ 
is a quasimartingale function for $Y$ by the same Corollary \ref{coro 2.7}. 
If $F = E$, the result follows by same arguments, since is this case, 
$E(\mathcal{U}) = E(\widehat{\mathcal{U}})$; cf. \cite{Sh88}, 65.13.
\end{proof}

The $\alpha$-quasimartingales are not preserved by time change, 
since $E(\mathcal{U}^{\alpha}) \not\subset E(\overline{\mathcal{U}}^{\alpha})$, $\alpha > 0$, in general.
However, the following result holds.

\begin{prop} \label{prop 3.5} 
If $u$ is an $\alpha$-quasimartingale function of $X$ for some $\alpha \geq 0$, 
then the process $(e^{-\alpha \tau_t}u(Y_t))_{t \geq 0}$ is a $\mathbb{P}^x$-quasimartingale w.r.t. 
the filtration $(\mathcal{G}_t)_{t \geq 0}$ for all $x \in F$.
\end{prop}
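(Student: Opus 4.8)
The plan is to reduce the assertion, via Rao's theorem (Theorem \ref{thm 2.2}), to the fact that a nonnegative excessive function generates a supermartingale which remains a supermartingale after the time substitution $t \mapsto \tau_t$. Since $u$ is an $\alpha$-quasimartingale function for $X$, Corollary \ref{coro 3.2} provides real-valued $u_1, u_2 \in E(\mathcal{U}^{\alpha})$ with $u = u_1 - u_2$ on $E$; in particular $u_1, u_2 \geq 0$. It then suffices to treat each $u_i$ separately, because $e^{-\alpha \tau_t}u(Y_t) = e^{-\alpha \tau_t}u_1(Y_t) - e^{-\alpha \tau_t}u_2(Y_t)$, and a difference of two nonnegative right-continuous $\mathcal{G}_t$-supermartingales is a $\mathbb{P}^x$-quasimartingale by Theorem \ref{thm 2.2}.

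Fix $i$ and set $Z^i_t := e^{-\alpha t}u_i(X_t)$. As $u_i$ is $\alpha$-excessive, $Z^i$ is a nonnegative, right-continuous $(\mathcal{F}_t)$-supermartingale under $\mathbb{P}^x$ with $\mathbb{E}^x[Z^i_0] = u_i(x) < \infty$. The family $(\tau_t)_{t \geq 0}$ is increasing (because $A$ is increasing) and right-continuous, so that $\tau_s \leq \tau_t$ and $\mathcal{G}_s = \mathcal{F}_{\tau_s} \subseteq \mathcal{F}_{\tau_t} = \mathcal{G}_t$ for $s \leq t$; thus $(\mathcal{G}_t)_t$ is genuinely a filtration. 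The central step is optional sampling for nonnegative right-continuous supermartingales along $(\tau_t)$: for $s \leq t$,
\[
\mathbb{E}^x[Z^i_{\tau_t} \mid \mathcal{F}_{\tau_s}] \leq Z^i_{\tau_s},
\]
with the convention $Z^i_\infty := \lim_{r \to \infty} Z^i_r$ on $\{\tau_t = \infty\}$ (the limit exists $\mathbb{P}^x$-a.s. by the supermartingale convergence theorem, consistently with $u(\Delta) = 0$). This shows that $(Z^i_{\tau_t})_t = (e^{-\alpha \tau_t}u_i(Y_t))_t$ is a nonnegative $(\mathcal{G}_t)$-supermartingale; it is right-continuous since $Z^i$ and $t \mapsto \tau_t$ are right-continuous, adapted since $\tau_t$ and $X_{\tau_t} = Y_t$ are $\mathcal{F}_{\tau_t}$-measurable, and integrable since $\mathbb{E}^x[Z^i_{\tau_t}] \leq \mathbb{E}^x[Z^i_0] = u_i(x) < \infty$.

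The main obstacle is making the optional sampling step rigorous when $\tau_t$ can take the value $+\infty$ (on $\{A_\infty \leq t\}$) and, especially, when $\alpha = 0$, where no exponential factor controls the behaviour at infinity and the bound must come from the a.s. limit $Z^i_\infty$. For nonnegative supermartingales this is precisely the situation in which optional sampling holds with no class-$(D)$ hypothesis: one applies it to the bounded times $\tau_t \wedge N$ and lets $N \to \infty$ using Fatou's lemma together with supermartingale convergence, so that no extra integrability assumption on $u_i$ is required. Once both $(e^{-\alpha \tau_t}u_i(Y_t))_t$ are identified as nonnegative right-continuous $\mathcal{G}_t$-supermartingales, Theorem \ref{thm 2.2} yields the conclusion for every $x \in F$.
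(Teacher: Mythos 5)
Your proof is correct, and the core of it coincides with the paper's: both arguments start from the decomposition $u=u_1-u_2$ with $u_1,u_2\in E(\mathcal{U}^{\alpha})$ real-valued (Corollary \ref{coro 3.2}) and both rest on Doob's optional stopping applied to the supermartingales $e^{-\alpha t}u_i(X_t)$ at the times $\tau_t$. Where you diverge is in how the conclusion is closed. The paper only uses the unconditional inequality $\mathbb{E}^x[e^{-\alpha\tau_t}u_i(X_{\tau_t})]\leq u_i(x)$, interprets $(e^{-\alpha\tau_t})_t$ as an exact perfect MF of $Y$ so that $Q_tu_i|_F\leq u_i|_F$ for the killed time-changed semigroup, and then invokes Theorem \ref{thm 2.6} ii) together with Proposition \ref{prop 3.1} to bound $\mathrm{Var}^{\mathbb{P}^x}$ by $V^{(Q_t)}(u|_F)\leq u_1+u_2$; the conditional-expectation bookkeeping is thus absorbed into the Markov property of $Y$. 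You instead prove the stronger conditional statement $\mathbb{E}^x[Z^i_{\tau_t}\mid\mathcal{F}_{\tau_s}]\leq Z^i_{\tau_s}$ directly (optional sampling for nonnegative right-continuous supermartingales at possibly infinite stopping times, via $\tau_t\wedge N$ and Fatou) and then quote Rao's theorem (Theorem \ref{thm 2.2}). Your route is more purely probabilistic and avoids the multiplicative-functional machinery and Proposition \ref{prop 3.1} entirely, at the price of needing the full conditional optional sampling theorem without a class-$(D)$ hypothesis; the paper's route keeps everything inside the analytic $V^{(P_t)}$ framework it has already built. Your handling of the boundary case $\tau_t=\infty$ is the one delicate point, and it works out because the actual process value there is $0\leq Z^i_\infty$, so the supermartingale inequality is only strengthened.
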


\begin{proof} 
If $u$ is an $\alpha$-quasimartingale function for $X$, then by Corollary \ref{coro 2.7}, $u = u_1 - u_2$ with $u_1, u_2 \in E(\mathcal{U}^{\alpha})$ finite on $E$. 
By Doob stopping theorem we have that
$$
\mathbb{E}^x\{e^{-\alpha \tau_t}u_i(X_{\tau_t})\} \leq u_i(x), \; x \in E, \; t \geq 0, \; i = \overline{1,2}.
$$
On the other hand, $(\alpha \tau_t)_{t \geq 0}$ is a perfect right-continuous $AF$ of $Y$, 
hence $(e^{-\alpha \tau_t})_{t \geq 0}$ is an exact and perfect $MF$ of $Y$; see \cite{Sh88}, 54.11. 
Let $(Q_t)_{t \geq 0}$ be the transition function of the process $Y$ killed by $(e^{-\alpha \tau_t})_{t \geq 0}$. 
Then
$$
Q_tu_i|_{F}(x) = \mathbb{E}^x\{e^{-\alpha \tau_t}u_i(X_{\tau_t})\} \leq u_i(x), \; x \in F,
$$
which means that $u_i|_F$ is $(Q_t)$-excessive, hence $V^{(Q_t)}(u|_F) < \infty$ (cf. Theorem \ref{thm 2.6}, ii)). 
The result now follows since 
${Var}^{\mathbb{P}^x}((e^{-\alpha \tau_t}(X_{\tau_t}))_{t \geq 0}) = V^{(Q_t)}(u|_F)(x)$ by Proposition 2.1.
\end{proof}

\vspace{0.3cm}
\noindent
{\bf Quasimartingales under Bochner subordination.} 
Assume that $X$ is transient and let $\mu:=(\mu_t)_{t \geq 0}$ 
be a vaguely continuous convolution semigroup of subprobability measures on $\mathbb{R}_+$. 
Define the {\it subordinate} $(P_t^{\mu})_{t\geq 0}$ of $(P_t)_{t \geq 0}$ by
$$
P_t^{\mu}f := \int_0^{\infty} P_sf \mu_t(ds) \quad {\rm for \; all} \; f \in bp\mathcal{B},
$$
whose resolvent is denoted by $\mathcal{U}^{\mu} := (U_{\alpha}^{\mu})_{\alpha \geq 0}$.
By \cite{Lu13}, Theorem 3.3, $(P_t^{\mu})_{t \geq 0}$ is the transition function of a right process $X^{\mu}$ on $E$. Moreover, $E(\mathcal{U}) \subset E(\mathcal{U}^{\mu})$, hence we have the following result.

\begin{coro} \label{coro 3.6} 
Any quasimartingale function for $X$ is a quasimartingale function for $X^{\mu}$.
\end{coro}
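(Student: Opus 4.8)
The plan is to deduce the statement directly from the Hahn--Jordan type characterization in Corollary~\ref{coro 2.7}, transferring the decomposition of $u$ from the excessive structure of $\mathcal{U}$ to that of $\mathcal{U}^{\mu}$. The only analytic ingredient needed beyond Corollary~\ref{coro 2.7} is the inclusion of excessive cones $E(\mathcal{U}) \subset E(\mathcal{U}^{\mu})$, which is recorded in the paragraph immediately preceding the statement, together with the fact from \cite{Lu13}, Theorem 3.3, that $X^{\mu}$ is a right process on $E$ with resolvent $\mathcal{U}^{\mu}$.

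First I would apply Corollary~\ref{coro 2.7} to the process $X$: since $u$ is a quasimartingale function for $X$, there exist real-valued $u_1, u_2 \in E(\mathcal{U})$ with $u = u_1 - u_2$, where \emph{real-valued} means finite everywhere, exactly the condition appearing in that corollary. Next I would transport this decomposition to the subordinate process. By the inclusion $E(\mathcal{U}) \subset E(\mathcal{U}^{\mu})$, the \emph{same} functions $u_1, u_2$ belong to $E(\mathcal{U}^{\mu})$, and they remain finite because they are literally unchanged as functions on $E$. Since Corollary~\ref{coro 2.7} is a statement about an arbitrary right process and $X^{\mu}$ is such a process with resolvent $\mathcal{U}^{\mu}$, it applies verbatim with $\mathcal{U}^{\mu}$ in place of $\mathcal{U}$; writing $u = u_1 - u_2$ as a difference of two real-valued $\mathcal{U}^{\mu}$-excessive functions then yields, through the sufficiency direction of that corollary, that $u$ is a quasimartingale function for $X^{\mu}$.

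I do not expect a genuine obstacle in this argument, precisely because the two essential facts---the characterization of quasimartingale functions as differences of finite excessive functions, and the monotonicity $E(\mathcal{U}) \subset E(\mathcal{U}^{\mu})$ of the excessive cones under Bochner subordination---are already available from earlier in the paper and from \cite{Lu13}. The genuine content is entirely upstream. The one point I would still verify with care is that the transience hypothesis on $X$ is what secures both the well-posedness of $X^{\mu}$ and the cone inclusion as stated, so that the decomposition $u = u_1 - u_2$ is manipulated only through objects whose existence that hypothesis guarantees.
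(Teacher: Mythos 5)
Your proposal is correct and follows exactly the paper's (implicit) argument: the paper derives Corollary \ref{coro 3.6} precisely from the inclusion $E(\mathcal{U}) \subset E(\mathcal{U}^{\mu})$ stated in the preceding paragraph together with the characterization of quasimartingale functions as differences of real-valued excessive functions in Corollary \ref{coro 2.7}, applied once to $X$ and once to the right process $X^{\mu}$. Nothing further is needed.
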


\vspace{0.2cm}

\noindent
{\bf Example.} Recall that a sub-Markovian resolvent of kernels $\mathcal{V} = (V_{\alpha})_{\alpha}$ 
is said to be $S$-subordinate to $\mathcal{U}$ if $E(\mathcal{U}) \subset E(\mathcal{V})$; see \cite{HmHm09} and \cite{Si99}.
 
By Corollary \ref{coro 2.7}, it follows that the class of 
quasimartingale functions for $X$ is inherited by any right process whose resolvent is $S$-subordinate to $\mathcal{U}$. 
We remark that killing, time change, Bochner subordination, 
and any combination of them, may be regarded as $S$-subordinations w.r.t. 
$\mathcal{U}$, hence the quasimartingale functions for $X$ are preserved under such transformations.
We emphasize that since the killing, time change, 
and Bochner subordination transformations do not commute in general, the order of any combination of them is relevant. 
We illustrate such a situation by looking at (Bochner) subordinate killed and killed subordinate Brownian motion. 
We follow \cite{SoVo03}; see also \cite{HmJa14}, Example 7.

Let $(X_t)_{t \geq 0}$ be a $d$-dimensional Brownian motion on $\mathbb{R}^d$ 
and $(\xi_t)_{t \geq 0}$ an $\alpha$-stable subordinator starting at 0, $\alpha \in (0,1)$. 
Let $Y_t = X_{\xi_t}$ be the right process whose transition function is the subordinate 
$(P_t^{\mu})_{t \geq 0}$ of $(P_t)_{t \geq 0}$ by means of the convolution semigroup $\mu$ induced by $(\xi_t)_{t \geq 0}$.
The generator of $Y$ is $-(-\Delta)^{\alpha}$, the fractional power of the negative Laplacian.
Let now $D \subset \mathbb{R}^d$ be a domain and denote by $Y^D$ the killed upon leaving $D$, 
which is a right process obtained by killing $Y$ with the exact $MF$ $M_t = 1_{[0, T_{D^c})}(t)$, $t \geq 0$, 
where $T_{D^c}(\omega) := \inf\{ t > 0 \; | \; Y_t(\omega) \in D^c \}$.

Changing the order of transformations, let $Z$ be the right process obtained by first killing $X$ upon leaving $D$ and then subordinating the killed Brownian motion by means of $\mu$. 
The generator of $Z$ is $-(-\Delta|_D)^{\alpha}$.
As remarked in \cite{HmJa14}, $Z$ is $S$-subordinate to $Y^D$, hence:

\begin{coro} \label{coro 3.7} 
Any quasimartingale function for $Y^D$ is a quasimartingale function for $Z$.
\end{coro}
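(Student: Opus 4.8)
The plan is to reduce the statement to Corollary \ref{coro 2.7} combined with the $S$-subordination relation between $Z$ and $Y^D$. Recall that Corollary \ref{coro 2.7} characterizes the quasimartingale functions of a right process intrinsically through its cone of excessive functions: $u$ is a quasimartingale function if and only if $u = u_1 - u_2$ for some real-valued $u_1, u_2$ excessive with respect to the associated resolvent. Denote by $\mathcal{U}_{Y^D}$ and $\mathcal{U}_Z$ the resolvents of $Y^D$ and $Z$, respectively. The corollary will then follow at once from the inclusion $E(\mathcal{U}_{Y^D}) \subset E(\mathcal{U}_Z)$, which is exactly the assertion that $Z$ is $S$-subordinate to $Y^D$ as recorded in \cite{HmJa14}.

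Concretely, I would argue as follows. Let $u$ be a quasimartingale function for $Y^D$. By Corollary \ref{coro 2.7} there exist real-valued $u_1, u_2 \in E(\mathcal{U}_{Y^D})$ with $u = u_1 - u_2$. Since $Z$ is $S$-subordinate to $Y^D$, the very definition of $S$-subordination gives $E(\mathcal{U}_{Y^D}) \subset E(\mathcal{U}_Z)$, so $u_1$ and $u_2$ are again real-valued excessive functions, now for $Z$. Applying Corollary \ref{coro 2.7} in the reverse direction, to the process $Z$, we obtain that $u = u_1 - u_2$ is a quasimartingale function for $Z$. This is nothing but the general transfer principle stated immediately after Corollary \ref{coro 2.7}, specialized to the pair $(Y^D, Z)$.

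Since the abstract part of the argument is immediate, the only genuine input is the inclusion $E(\mathcal{U}_{Y^D}) \subset E(\mathcal{U}_Z)$, i.e. the fact that $Z$ is $S$-subordinate to $Y^D$; this is the step where the specific structure of the two processes --- subordinate killed versus killed subordinate Brownian motion --- enters, and it is imported from \cite{HmJa14}. Consequently there is no serious obstacle internal to the present paper: the one point worth checking is that the excessive functions produced by Corollary \ref{coro 2.7} are genuinely finite (real-valued) and that both processes share the same effective state space $D$, so that the cone inclusion can be read off on the correct domain. Both are automatic here, the first because $u$ being a quasimartingale function for $Y^D$ forces the absorbing set $[V(u) < \infty]$ to be the whole space, and the second by the construction of $Y^D$ and $Z$.
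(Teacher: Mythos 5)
Your proposal is correct and follows exactly the paper's own route: the paper derives Corollary \ref{coro 3.7} as an instance of the general transfer principle stated just before it, namely that by Corollary \ref{coro 2.7} the class of quasimartingale functions is inherited by any process whose resolvent is $S$-subordinate, combined with the fact (imported from \cite{HmJa14}) that $Z$ is $S$-subordinate to $Y^D$. Your additional checks on finiteness and the common state space $D$ are sound but not needed beyond what Corollary \ref{coro 2.7} already guarantees.
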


\section{Criteria for quasimartingale functions}

In this section we present some sufficient conditions for a function to be an $\alpha$-quasimartingale function. 
In the first part we develop the study from the resolvent point of view, 
while in the last part we place ourselves in an $L^p$-context ($C_0$-semigroups and infinitesimal generators) 
with respect to a sub-invariant measure.
\vspace{0.3cm}

\noindent
{\bf A resolvent approach.} Again, we deal with a fixed right Markov process 
$X = (\Omega, \mathcal{F}, \mathcal{F}_t, X_t, \mathbb{P}^x)$ on $(E, \mathcal{B})$, with transition function $(P_t)_{t \geq 0}$ 
and resolvent $\mathcal{U} = (U_{\alpha})_{\alpha > 0}$. 

The main result of this subsection is the following.
\begin{prop} \label{prop 4.1}
 Let $u$ be a real-valued $\mathcal{B}$-measurable finely continuous function. 

i) Assume there exist $\alpha \geq 0$ and $c \in p\mathcal{B}$ such that
$$
U_{\alpha}(|u| + c) < \infty, \quad \mathop{\lim\sup}\limits_{t \to \infty} P_t^{\alpha}|u| < \infty, \quad |P_t u - u| \leq c t, t \geq 0,
$$
and the functions $t \mapsto P_t(|u| + c)(x)$ are Riemann integrable.  
Then $u$ is an $\alpha$-quasimartingale function.

ii) Assume there exist $\alpha \geq 0$, $c \in p\mathcal{B}$ such that 
$$
|P_t u- u| \leq c t, t \geq 0, \quad \mathop{\sup}\limits_{t \in \mathbb{R}_+} P_t^{\alpha}(|u| + c) = : b < \infty.
$$ 
Then $u$ is a $\beta$-quasimartingale function for all $\beta > \alpha$.

iii) Assume there exists $x_0 \in E$ such that for some $\alpha \geq 0$
$$
U_{\alpha}(|u|)(x_0) < \infty, \quad U_{\alpha}(|P_t u - u|)(x_0) \leq {const} \cdot t, \; t \geq 0.
$$
Then $[V^{\beta}(u) < \infty] \neq \emptyset$ and if $\mathcal{U}$ is irreducible 
(e.g. strong Feller and topologically irreducible) then $u$ is a $\beta$-quasimartingale function for all $\beta > \alpha$.
\end{prop}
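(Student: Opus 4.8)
The plan is to reduce each of the three statements to the finiteness of the variation $V^\beta(u)$ attached to the exponentially killed transition function $(P_t^\beta)_{t\ge0}$, and then feed this into the structural results already proved. By Proposition \ref{prop 3.1} (applied to the decreasing multiplicative functional $M_t=e^{-\beta t}$, whose associated semigroup is $Q_t=P_t^\beta$) one has ${Var}^{\mathbb{P}^x}\big((e^{-\beta t}u(X_t))_t\big)=V^\beta(u)(x)$, so that $u$ is a $\beta$-quasimartingale function precisely when $V^\beta(u)<\infty$ everywhere; moreover $(P_t^\beta)$ is itself the transition function of a right process, so Theorem \ref{thm 2.6} applies verbatim to it. In particular, by the proof of Theorem \ref{thm 2.6} iii), $V_\tau^\beta(u)$ is nondecreasing under refinement of $\tau$, hence for any admissible sequence $(\tau_m)_m$ (Definition \ref{defi 2.5}) it increases to $V^\beta(u)$ and $[V^\beta(u)<\infty]=[\sup_m V_{\tau_m}^\beta(u)<\infty]$. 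After recording that $P_t|u|<\infty$ for every $t$ (immediate in ii), iii) from the uniform, resp. resolvent, bounds, and derivable in i) from $\limsup_t P_t^\alpha|u|<\infty$ together with $U_\alpha|u|<\infty$), the whole argument rests on one elementary inequality. Writing $u-P_s^\beta u=(u-P_su)+(1-e^{-\beta s})P_su$, using $|P_su-u|\le cs$ and the identity $P_{t_{i-1}}^\beta P_{s_i}=e^{-\beta t_{i-1}}P_{t_i}$, one gets for every partition $\tau$ (with $s_i:=t_i-t_{i-1}$)
\begin{equation*}
V_\tau^\beta(u)\ \le\ \sum_{i=1}^n s_i\,P_{t_{i-1}}^\beta c\ +\ \sum_{i=1}^n\big(e^{-\beta t_{i-1}}-e^{-\beta t_i}\big)P_{t_i}|u|\ +\ P_{t_n}^\beta|u|\ =:\ A_\tau+B_\tau+C_\tau .
\end{equation*}

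For i) ($\beta=\alpha$) I would pass to the limit along an admissible sequence with mesh tending to $0$ and last node tending to $\infty$. The Riemann integrability of $t\mapsto P_t(|u|+c)(x)$ forces $A_{\tau_m}\to\int_0^\infty P_s^\alpha c\,ds=U_\alpha c(x)$ and $B_{\tau_m}\to\int_0^\infty\alpha e^{-\alpha s}P_s|u|\,ds=\alpha U_\alpha|u|(x)$, both finite since $U_\alpha(|u|+c)<\infty$, while $\limsup_m C_{\tau_m}\le\limsup_{t\to\infty}P_t^\alpha|u|(x)<\infty$. Hence $V^\alpha(u)(x)<\infty$ for every $x$. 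For ii) ($\beta>\alpha$) the strict exponential gain replaces the Riemann hypothesis: from $\sup_tP_t^\alpha(|u|+c)=b$ one has $P_t(|u|+c)\le b\,e^{\alpha t}$, so $A_\tau$ and $B_\tau$ are dominated (up to the factor $\beta$) by left Riemann sums of the \emph{decreasing} envelope $t\mapsto b\,e^{-(\beta-\alpha)t}$; monotonicity keeps these sums bounded along fine partitions, giving $\limsup_m(A_{\tau_m}+B_{\tau_m})\le (1+\beta)\,b/(\beta-\alpha)$, while $C_{\tau_m}\le b\,e^{-(\beta-\alpha)\max\tau_m}\to0$. Thus $V^\beta(u)<\infty$ for all $\beta>\alpha$.

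For iii) the pointwise bound $|P_su-u|\le cs$ is unavailable, so I would integrate the same inequality against the measure $\lambda:=\varepsilon_{x_0}U_\alpha$, i.e. $\lambda(f)=U_\alpha f(x_0)$. The key point is that $\lambda$ is sub-invariant with a built-in decay: a direct computation gives $\lambda P_t\le e^{\alpha t}\lambda$, hence $\lambda P_t^\beta\le e^{-(\beta-\alpha)t}\lambda$ for $\beta\ge\alpha$. Integrating $A_\tau+B_\tau+C_\tau$ and using $\lambda(|u-P_su|)=U_\alpha(|P_su-u|)(x_0)\le Cs$, $\lambda(|u|)=U_\alpha|u|(x_0)<\infty$ and $\lambda(P_{t_i}|u|)\le e^{\alpha t_i}\lambda(|u|)$, I would obtain, along a fine admissible sequence and by monotone convergence,
\begin{equation*}
\lambda\big(V^\beta(u)\big)=\lim_m\lambda\big(V_{\tau_m}^\beta(u)\big)\ \le\ \frac{C}{\beta-\alpha}+\frac{\beta}{\beta-\alpha}\,\lambda(|u|)\ <\ \infty .
\end{equation*}
Consequently $V^\beta(u)<\infty$ $\lambda$-a.e., so $[V^\beta(u)<\infty]$ carries positive $\lambda$-mass and in particular is nonempty. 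By Theorem \ref{thm 2.6} iii) this set is absorbing, and when $\mathcal{U}$ is irreducible the only nonempty absorbing set is $E$, exactly as in Corollary \ref{coro 2.9}; hence $V^\beta(u)<\infty$ on $E$ and $u$ is a $\beta$-quasimartingale function for every $\beta>\alpha$.

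The main obstacle, and the reason the three sets of hypotheses look the way they do, is that the left-endpoint Riemann sums $A_\tau,B_\tau$ are \emph{not} bounded uniformly over all partitions: a single long initial interval already defeats the naive estimate, because $|P_su-u|\le cs$ is useless for large $s$. This is precisely why it is essential to compute $V^\beta$ as the increasing limit over fine admissible partitions and to let either the Riemann integrability (in i)) or the strict exponential gain $\beta>\alpha$ (in ii), iii)) convert these sums into the convergent integrals $U_\beta c$ and $\beta U_\beta|u|$. The only additional subtlety in iii) is verifying the sub-invariance-with-decay of $\lambda=\varepsilon_{x_0}U_\alpha$, which is what upgrades the purely pointwise resolvent bounds at $x_0$ into the integrated estimate above.
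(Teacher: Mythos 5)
Your proposal is correct and follows essentially the same route as the paper: bound $V^{\beta}_{\tau_m}(u)$ along the dyadic admissible partitions so that the sums become Riemann sums converging to $U_\alpha(|u|+c)$ in i), exploit the exponential gain $e^{-(\beta-\alpha)t}$ in ii), and in iii) integrate against the $\alpha$-sub-invariant measure $\varepsilon_{x_0}U_\alpha$ before invoking the absorbing-set/irreducibility argument of Theorem \ref{thm 2.6} iii) and Corollary \ref{coro 2.9}. The only cosmetic difference is your explicit $A_\tau+B_\tau+C_\tau$ splitting in place of the paper's pointwise estimate $|P_t^{\beta}u-u|\leq \mathrm{const}\cdot(|u|+c)t$ for small $t$, and you should read the identity $V^{\beta}(u)=\sup_m V^{\beta}_{\tau_m}(u)$ only at the level of the finiteness sets, which is all Theorem \ref{thm 2.6} iii) provides and all the argument needs.
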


\begin{proof}
Assume that the conditions in i) are satisfied and let 
$$
\tau_n := \left\{ \dfrac{k}{2^n} \; : \; 0 \leq k \leq n \cdot 2^n \right\}, \; n \geq 1.
$$

\noindent
Clearly, $(\tau_n)_{n \geq 1}$ is an admissible sequence of partitions of $\mathbb{R}_+$ (see Definition \ref{defi 2.5}), hence, by Theorem \ref{thm 2.6}, iii), we have to check that $\mathop{\lim}\limits_{n}V_{\tau_n}^{\alpha}(u) < \infty$.
First, note that by hypotheses,
$$
|P_t^{\alpha} u - u| \leq |P_t u|(1 - e^{-\alpha t}) + ct
\leq ({const} |P_t u| + c)t
\leq {const} (|u| + c)t
$$
for all $t$ small enough. Therefore,

\vspace{0.2cm}
\qquad \; 
$\mathop{\lim}\limits_{n} V_{\tau_n}^{\alpha}(u) = \mathop{\lim}\limits_{n}\left\{ \mathop{\sum}\limits_{k=1}^{n 2^n - 1} P_{\frac{k-1}{2^n}}^{\alpha} | P_{\frac{1}{2^n}}^{\alpha} u - u | + 
P_{n 2^n}^{\alpha}|u|\right\}$

\vspace{0.2cm}
\qquad \qquad \quad \quad \;\;\; 
$\leq \mathop{\lim\sup}\limits_{n} \left\{ \mathop{\sum}\limits_{k=1}^{n 2^n - 1} P_{\frac{k-1}{2^n}}^{\alpha} |P_{\frac{1}{2^n}}^{\alpha}u - u| \right\} + \mathop{\lim\sup}\limits_{n} P_{n \cdot 2^n}^{\alpha}|u|$.

\vspace{0.2cm}
\noindent
By hypothesis, $\mathop{\lim\sup}\limits_{n} P_{n \cdot 2^n}^{\alpha}|u| < \infty$. 
As for the other term, we have
$$
\mathop{\lim\sup}\limits_{n} \left\{ \mathop{\sum}\limits_{k=1}^{n 2^n - 1} P_{\frac{k-1}{2^n}}^{\alpha} |P_{\frac{1}{2^n}}^{\alpha} u - u| \right\} \leq
{const} \cdot \mathop{\lim\sup}\limits_{n} \{ \dfrac{1}{2^n} \mathop{\sum}\limits_{k=1}^{n 2^n -1} P_{\frac{k-1}{2^n}}^{\alpha}(|u| + c)\} 
$$

\qquad \qquad \qquad \qquad \qquad \qquad \qquad \qquad 
$= {const} \cdot \int_0^{\infty}e^{-\alpha t}P_t(|u| + c)dt$

\vspace{0.2cm}

\qquad \qquad \qquad \qquad \qquad \qquad \qquad \qquad 
$= {const} \cdot U_{\alpha}(|u| + c) < \infty$.

\vspace{0.2cm}

ii). Let $\beta > 0$.
Similarily to the above computations and noticing that 
$\lim\limits_{t \to \infty}P_t^\beta|u|=0$,
$$
\lim\limits_nV_{\tau_n}^\beta (u) \leq const \cdot \limsup\limits_n \sum\limits_{k=1}^{n2^n-1}P^\beta_{\frac{k-1}{2^n}}(|u| + c)\frac{1}{2^n}
$$

\qquad \qquad \qquad \qquad \qquad \qquad $\leq const \cdot b \; \limsup\limits_n \sum\limits_{k=1}^{n2^n-1}e^{-(\beta-\alpha){\frac{k-1}{2^n}}}\frac{1}{2^n}$

\vspace{0.2cm}
\qquad \qquad \qquad \qquad \qquad \qquad $= const \cdot b \int\limits_0^\infty e^{-(\beta - \alpha)t dt} < \infty$.

\vspace{0.2cm}

iii). Let $\beta > \alpha$. 
Once we show that $[V^{\beta}(u) < \infty] \neq \emptyset$, the second assertion follows by Corollary \ref{coro 2.9}. 
Taking into account Theorem \ref{thm 2.6}, iii), we will show that 
$U_{\alpha}(\mathop{\lim}\limits_{n}V_{\tau_n}^{\beta}(u))(x_0) < \infty$. 
Notice first that $\delta_{x_0}\circ U_{\alpha}$ 
is an $\alpha$-sub-invariant measure for $(P_t)_t$, i.e. 
$U_{\alpha}(P_t^{\alpha}f)(x_0) \leq U_{\alpha} f(x_0)$ for all $f \in p\mathcal{B}$. 
Employing this property and using the hypotheses,

\vspace{0.2cm}

$U_{\alpha}(\mathop{\lim}\limits_{n}V_{\tau_n}^{\beta}(u))(x_0) = \mathop{\lim}\limits_{n} U_{\alpha}(V_{\tau_n}^{\beta}(u))(x_0)$

\vspace{0.2cm}

\qquad \qquad \qquad \quad \;\;\; 
$= \mathop{\lim}\limits_{n}\left\{ \mathop{\sum}\limits_{k = 1}^{n\cdot 2^n -1} U_{\alpha}( P^{\beta}_\frac{k-1}{2^n} | u - P^{\beta}_\frac{1}{2^n}u|)(x_0) + U_{\alpha}(P_{n}(|u|))(x_0)\right\}$

\vspace{0.2cm}

\qquad \qquad \qquad \quad \;\;\; 
$\leq \mathop{\lim\sup}\limits_{n} \mathop{\sum}\limits_{k=1}^{n 2^n -1}e^{-(\beta - \alpha) \frac{k-1}{2^n}} U_{\alpha} | u - P^{\beta}_\frac{1}{2^n}u |(x_0) $

\qquad \qquad \qquad \qquad \quad $+\mathop{\lim\sup}\limits_{n} e^{-(\beta - \alpha)n} U_{\alpha}(|u|)(x_0)$

\vspace{0.2cm}

\qquad \qquad \qquad \quad \;\;\; $\leq {const} \; \cdot \int_{0}^{\infty} e^{-(\beta - \alpha)t} dt < \infty$.
\end{proof}

\noindent
{\bf An $L^p$-approach.} Throughout this subsection we assume that 
$\mu$ is a $\sigma$-finite sub-invariant measure for $(P_t)_{t 
\geq 0}$. 
Hence $(P_t)_{t \geq 0}$ and $\mathcal{U}$ 
extend to strongly continuous semigroup resp. resolvent family of contractions on $L^p(\mu)$, $1 \leq p < \infty$. 
The corresponding generators $({\sf L}_p, D({\sf L}_p) \subset L^p(\mu))$ are defined by
$$
D({\sf L}_p) = \{ U_{\alpha} f | f \in L^p(\mu) \},
$$
$$
{\sf L}_p(U_{\alpha} f) = \alpha U_{\alpha} f - f \quad {\rm for \; all} \; f \in L^p(\mu), 1 \leq p < \infty,
$$
with the remark that this definition si independent of $\alpha > 0$.

The corresponding notations for the dual structure are 
$\widehat{P}_t$ and $(\widehat{\sf L}_p, D(\widehat{\sf L}_p))$, 
and note that the adjoint of ${\sf L}_p$ is $\widehat{\sf L}_{p^\ast}$; $\frac{1}{p} + \frac{1}{p^\ast}=1$.

\vspace{0.2cm}

The main results of Section 2, namely Theorem \ref{thm 2.6} and its Corollary \ref{coro 2.7}, 
can be reformulated in the $L^p(\mu)$ context. Although the proofs follow the same main ideas, 
they become simpler due to the strong continuity of $(P_t)_{t \geq 0}$ on $L^p(\mu)$. 
Because we are mainly interested in the situation when $V(u) < \infty$ on $E$ 
except some negligible set, but also for simplicity, we present below the $L^p$-version of Corollary \ref{coro 2.7} only.

\begin{prop} \label{prop 4.2}
The following assertions are equivalent for a $\mathcal{B}$-measurable function $u \in \mathop{\bigcup}\limits_{1 \leq p < \infty} L^p(\mu)$.

\vspace{0.2cm}

i) $u(X)$ is a $\mathbb{P}^x$-quasimartingale for $\mu$-a.e. $x \in E$.

\vspace{0.2cm}

ii) $V(u) < \infty$ $\mu$-a.e.

\vspace{0.2cm}

iii) For an admissible sequence of partitions of 
$(\tau_n)_{n \geq 1}$ of $\mathbb{R}_+$, $\mathop{\sup}\limits_{n} V_{\tau_n}(u) < \infty$ $\mu$-a.e.

\vspace{0.2cm}

iv) There exist $u_1, u_2 \in E(\mathcal{U})$ finite $m$-a.e. such that $u = u_1 - u_2$ $\mu$-a.e.
\end{prop}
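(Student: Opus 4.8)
The plan is to establish the cycle ii) $\Rightarrow$ iii) $\Rightarrow$ iv) $\Rightarrow$ ii) together with the equivalence i) $\Leftrightarrow$ ii), following the architecture of the proof of Theorem \ref{thm 2.6} but substituting the strong continuity of $(P_t)_{t\geq 0}$ on $L^p(\mu)$ for the fine-continuity estimates that there relied on Lemma \ref{lem 2.8}.

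First I would dispatch the two cheap links. The equivalence i) $\Leftrightarrow$ ii) is immediate from Theorem \ref{thm 2.6} i): the pointwise identity $\mathrm{Var}^{\mathbb{P}^x}(u(X))=V(u)(x)$ shows that $u(X)$ is a $\mathbb{P}^x$-quasimartingale exactly on $[V(u)<\infty]$, so the two conditions hold simultaneously off a $\mu$-null set. The implication ii) $\Rightarrow$ iii) is trivial, since $V_{\tau_n}(u)\le V(u)$ for every $n$ forces $\sup_n V_{\tau_n}(u)\le V(u)<\infty$ $\mu$-a.e.

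The substance is in iii) $\Rightarrow$ iv). Here I would reproduce the construction from the proof of Theorem \ref{thm 2.6} iii): with $u_i^\tau$ defined as there, monotone under refinement, set $u_1:=\sup_n u_1^{\tau_n}$ and $u_2:=\sup_n u_2^{\tau_n}$, so that $u_1+u_2=\sup_n V_{\tau_n}(u)<\infty$ $\mu$-a.e. and $u=u_1-u_2$ on that set. The purely combinatorial step yielding $P_r u_i\le u_i$ for $r\in\bigcup_n\tau_n$ transfers verbatim. The main obstacle is to promote this to $P_t u_i\le u_i$ for every $t$, and then to excessiveness, without any pointwise regularity of $u$; this is exactly where the $L^p$ framework pays off. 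Each approximant $u_i^{\tau_n}$ lies in $L^p(\mu)$, being a finite sum of images under $P_t$-contractions of the $L^p$-functions $(u-P_su)^{\pm}$ and $u^{\pm}$. For $t_j\searrow t$ with $t_j\in\bigcup_n\tau_n$, strong continuity gives $P_{t_j}u_i^{\tau_n}\to P_t u_i^{\tau_n}$ in $L^p(\mu)$, hence $\mu$-a.e. along a subsequence; combined with $P_{t_j}u_i^{\tau_n}\le P_{t_j}u_i\le u_i$ and a passage to the limit in $j$ and then $n$, this yields $P_t u_i\le u_i$ $\mu$-a.e. For excessiveness, supermedianity makes $t\mapsto P_t u_i$ nonincreasing, so $\lim_{t\downarrow 0}P_t u_i$ exists and is $\le u_i$; conversely $P_t u_i\ge P_t u_i^{\tau_n}$ and strong continuity give $\lim_{t\downarrow 0}P_t u_i\ge\lim_{t\downarrow 0}P_t u_i^{\tau_n}=u_i^{\tau_n}$ $\mu$-a.e., whence $\lim_{t\downarrow 0}P_t u_i\ge\sup_n u_i^{\tau_n}=u_i$. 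Thus each $u_i$ is supermedian with $\lim_{t\downarrow 0}P_t u_i=u_i$ $\mu$-a.e., and \cite{BeBo04}, Corollary 1.3.4, identifies it, up to $\mu$-equivalence, with a genuine element of $E(\mathcal{U})$, finite $\mu$-a.e. since $u_1+u_2<\infty$ $\mu$-a.e. Unlike Case 2 of Theorem \ref{thm 2.6}, no reduction to a lower-bounded $u$ is required, because $u\in L^p(\mu)$ makes all the relevant functions integrable from the start.

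Finally, iv) $\Rightarrow$ ii) is a direct appeal to Theorem \ref{thm 2.6} ii): since $u_1,u_2\in E(\mathcal{U})\subset S(\mathcal{U})$ are finite $\mu$-a.e. and $u=u_1-u_2$ $\mu$-a.e., on the set $[u_1+u_2<\infty]$ of full $\mu$-measure one obtains $V(u)\le u_1+u_2<\infty$ $\mu$-a.e.; here the a.e. relations propagate through the semigroup because sub-invariance of $\mu$ makes each $P_t$ map $\mu$-null sets to $\mu$-null sets. This closes the cycle. I expect the sole genuine difficulty to be the supermedian-to-excessive passage for $u_1,u_2$, and, consistently with the remark preceding the statement, the strong continuity of $(P_t)$ on $L^p(\mu)$ is precisely what eliminates the fine-topology machinery of Section 2 needed there.
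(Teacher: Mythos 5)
Your proposal is correct and follows essentially the same route as the paper: the paper proves only iii) $\Rightarrow$ iv), using exactly your construction ($u_i:=\sup_n u_i^{\tau_n}$, supermedianity at partition points, then $P_tu_i\le u_i$ $\mu$-a.e.\ via strong $L^p$-continuity along $t_j\searrow t$), and then invokes a $\mu$-a.e.\ regularization result (\cite{BeCiRo15}, Proposition 2.4) to replace the $\widetilde{u}_i$ by genuine elements of $E(\mathcal{U})$, which is the same step you carry out by verifying $\lim_{t\downarrow 0}P_tu_i=u_i$ $\mu$-a.e.\ and appealing to \cite{BeBo04}. The remaining implications, which the paper leaves to the reader as following "the same main ideas" as Theorem \ref{thm 2.6} and Corollary \ref{coro 2.7}, are handled by you in the intended way.
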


\begin{proof} 
We prove only iii) $\Rightarrow$ iv), just to point out the benefit of the strong continuity of $(P_t)_{t \geq 0}$ on $L^p(\mu)$.

As in the proof of Theorem \ref{thm 2.6}, iii), if we define $\widetilde{u}_1$ and $\widetilde{u}_2$ $\mu$-a.e. by
$$
\widetilde{u}_1 = \mathop{\sup}\limits_{n} u^{\tau_n}_1 = \mathop{\sup}\limits_{n} \left\{ \mathop{\sum}\limits_{i=1}^{n} P_{t_{i-1}}(u - P_{t_i - t_{i-1}}u)^+ + P_{t_n}(u^+) \right\},
$$
$$
\widetilde{u}_2 = \mathop{\sup}\limits_{n} u^{\tau_n}_2 = 
\mathop{\sup}\limits_{n} \left\{ \mathop{\sum}\limits_{i=1}^{n} P_{t_{i-1}}(u - P_{t_i - t_{i-1}}u)^- + P_{t_n}(u^-) \right\},
$$
then $\widetilde{u}_i$ are finite $m$-a.e. and one can show that
$P_r \widetilde{u}_i \leq \widetilde{u}_i$ for all 
$r \in \mathop{\bigcup}\limits_{n \geq 1}\tau_n, i = \overline{1,2}$,
and $u = \widetilde{u}_1 - \widetilde{u}_2$ $\mu$-a.e.

If $t \in [0, \infty)$ and $\mathop{\bigcup}\limits_{n \geq 1}\tau_n \supset (t_k)_k \searrow t$ 
then for $i =\overline{1,2}$ and $\mu$-a.e.,
$$
P_t\widetilde{u}_i = \mathop{\sup}\limits_{n}P_t u^{\tau_n}_i = 
\mathop{\sup}\limits_{n} \mathop{\lim}\limits_{k} P_{t_k}u^{\tau_n}_i \leq \mathop{\lim}\limits_{k} P_{t_k} \widetilde{u}_i
\leq \widetilde{u}_i,
$$
with the remark that the second holds $\mu$-a.e. because 
$u^{\tau_n}_i \in L^p(\mu)$ and $(P_t)_{t \geq 0}$ is strongly continuous.
Then, cf. e.g. \cite{BeCiRo15}, Proposition 2.4, 
there exist two $\mathcal{B}$-measurable functions 
$u_1, u_2 \in E(\mathcal{U})$ s.t. $\widetilde{u}_i = u_i$ $\mu$-a.e., and finally, $u = u_1 - u_2$ $\mu$-a.e.
\end{proof}

\begin{rem} \label{rem 4.3} i) We point out that for the proof of 
Proposition \ref{prop 4.2} we did not really used the fact that $\mu$ is sub-invariant, 
but just that $(P_t)_t$ is strongly continuous on $L^p(\mu)$.
In particular, Proposition \ref{prop 4.2} remains true for $u \in L^\infty(\mu)$ 
if we regard $(P_t)_t$ as a strongly continuous semigroup on $L^1(\widehat{U}_1 f \cdot \mu)$ for some $0<f\in L^1(\mu)$.

\vspace{0.2cm}

ii) If $u$ is a $\mathcal{B}$-measurable finely continuous function from 
$\mathop{\bigcup}\limits_{1 \leq p \leq \infty} L^p(\mu)$ 
satisfying any of the equivalent assertions in Proposition \ref{prop 4.2}, then the decomposition 
$u = u_1 - u_2$ with $u_1, u_2 \in E(\mathcal{U})$ holds q.e.
\end{rem}

Now, we focus our attention on a class of $\alpha$-quasimartingale 
functions which arises as a natural extension of $D({\sf L}_p)$. 
First of all, it is clear that any function $u \in D({\sf L}_p)$, $1 \leq p < \infty$, 
has a representation $u = U_{\alpha} f = U_{\alpha}(f^+) - U_{\alpha}(f^-)$ with $U_{\alpha}(f^{\pm}) \in E(\mathcal{U}^{\alpha}) \cap L^p(\mu)$. 
In particular, $u$ has an $\alpha$-quasimartingale version for all $\alpha > 0$.
Moreover, $\| P_t u - u \|_p = \left\| \int_0^t P_s{\sf L}_p u ds \right\|_p \leq t \| {\sf L}_p u \|_p$. 
Conversely, if $1 < p < \infty$, 
$u \in L^p(\mu)$, and $\| P_t u - u \|_p \leq {const} \cdot t$, $t \geq 0$, then due to the reflexivity of $L^p$ we have that the family $ \{\frac{P_t u - u}{t}\}_{t\geq 0}$ is weakly relatively compact, and by duality one can easily check that any weakly convergent subsequence $ (\frac{P_{t_n} u - u}{t_n})_{t_n \to 0}$ has the same limit. Therefore $\frac{P_t u - u}{t}$ is weakly convergent to a limit from $L^p(\mu)$ as $t$ tends to $0$, and by \cite{Sa99}, Lemma 32.3, it is strongly convergent and $u \in D({\sf L}_p)$.
But this is no longer the case if $p = 1$, and in general, 
$\| P_t u - u \|_1 \leq {const} \cdot t$ does not imply $u \in D({\sf L}_1)$.
However, this last condition on $L^1(\mu)$ is still sufficient to guarantee that 
$u$ is an $\alpha$-quasimartingale function.
In fact, the following general characterization holds.

\begin{prop} \label{prop 4.4} 
Let $1 \leq p < \infty$ and suppose $\mathcal{A} \subset \{ u \in L^{p^{\ast}}_+(\mu) : \| u \|_{p^{\ast}} \leq 1 \}$, $\widehat{P}_s\mathcal{A} \subset \mathcal{A}$ for all $s \geq 0$, and 
$E = \mathop{\bigcup}\limits_{f \in \mathcal{A}}{\rm supp}(f)$ $\mu$-a.e. 
Then the following assertions are equivalent for $u \in L^p(\mu)$.

\vspace{0.2cm}

i) $\sup\limits_{f \in \mathcal{A}}\int_E |P_tu - u| f d\mu \leq {const} \cdot t$ for all $t \geq 0$. 

\vspace{0.2cm}

ii) For every $\alpha > 0$ there exist $u_1, u_2 \in E(\mathcal{U}^\alpha)$ which satisfy i), 
$\mathop{\sup}\limits_{f \in \mathcal{A}} \int_E (u_1 + u_2) f d\mu < \infty$, and $u = u_1 - u_2$ $\mu$-a.e.
\end{prop}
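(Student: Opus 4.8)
The implication ii) $\Rightarrow$ i) is immediate and I would dispose of it first: if $u=u_1-u_2$ with $u_1,u_2$ satisfying the inequality in i), then $P_tu-u=(P_tu_1-u_1)-(P_tu_2-u_2)$, so $|P_tu-u|\le|P_tu_1-u_1|+|P_tu_2-u_2|$, and integrating against an arbitrary $f\in\mathcal{A}$ gives $\sup_{f}\int_E|P_tu-u|f\,d\mu\le\mathrm{const}\cdot t$. All the substance lies in i) $\Rightarrow$ ii), which I would prove by adapting the construction of Theorem \ref{thm 2.6} iii) in its $L^p$ form (Proposition \ref{prop 4.2}), but systematically testing every quantity against the functions $f\in\mathcal{A}$ through the duality $\int_E(P_sg)f\,d\mu=\int_E g\,\widehat{P}_sf\,d\mu$ together with the invariance $\widehat{P}_s\mathcal{A}\subset\mathcal{A}$.

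First I would pass to the $\alpha$-version of the hypothesis: from $P^\alpha_su-u=e^{-\alpha s}(P_su-u)+(e^{-\alpha s}-1)u$ and $\int_E|u|f\,d\mu\le\|u\|_p$ (Hölder, since $\|f\|_{p^\ast}\le1$) one gets $\sup_{f\in\mathcal{A}}\int_E|P^\alpha_su-u|f\,d\mu\le\mathrm{const}\cdot s$ for all $s\ge0$. Then, for the dyadic admissible partitions $\tau_n=\{k/2^n:0\le k\le n2^n\}$, the duality and $\widehat{P}_{t_{i-1}}f\in\mathcal{A}$ give
\[
\int_E V^\alpha_{\tau_n}(u)\,f\,d\mu=\sum_{i=1}^{n2^n}e^{-\alpha t_{i-1}}\int_E\bigl|u-P^\alpha_{1/2^n}u\bigr|\,(\widehat{P}_{t_{i-1}}f)\,d\mu+e^{-\alpha n}\int_E|u|\,(\widehat{P}_nf)\,d\mu,
\]
which by the $\alpha$-version of i) is at most $\mathrm{const}\cdot2^{-n}\sum_{k}e^{-\alpha(k-1)/2^n}+e^{-\alpha n}\|u\|_p\le\mathrm{const}\cdot\int_0^\infty e^{-\alpha t}\,dt+o(1)$. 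Hence $\sup_n\sup_{f\in\mathcal{A}}\int_E V^\alpha_{\tau_n}(u)f\,d\mu<\infty$.

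Exactly as in Theorem \ref{thm 2.6} iii) and Proposition \ref{prop 4.2}, I would then set $u_i:=\sup_n u_i^{\tau_n}$ (with $P_t$ replaced by $P^\alpha_t$), so that $u_1+u_2=\sup_n V^\alpha_{\tau_n}(u)$ and $u=u_1-u_2$ $\mu$-a.e. Monotone convergence yields $\sup_{f}\int_E(u_1+u_2)f\,d\mu<\infty$, and since $E=\bigcup_{f\in\mathcal{A}}\mathrm{supp}(f)$ $\mu$-a.e. this forces $u_1,u_2<\infty$ $\mu$-a.e. The $\alpha$-excessivity is obtained as in Proposition \ref{prop 4.2}: one checks $P^\alpha_r u_i\le u_i$ for dyadic $r$, deduces the supermedian property from the strong continuity of $(P^\alpha_t)$ on $L^p(\mu)$, and finally replaces $u_i$ by a genuinely $\alpha$-excessive version via \cite{BeCiRo15}, Proposition 2.4, so that $u_1,u_2\in E(\mathcal{U}^\alpha)$.

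The main obstacle is checking that $u_1,u_2$ themselves satisfy i). Here I would exploit a telescoping cancellation: for dyadic $s=J/2^n$, writing out $u_1^{\tau_n}-P^\alpha_s u_1^{\tau_n}$ the interior terms cancel and one is left with $u_1^{\tau_n}-P^\alpha_s u_1^{\tau_n}\le\sum_{k=1}^{J}P^\alpha_{(k-1)/2^n}(u-P^\alpha_{1/2^n}u)^+ +P^\alpha_n(u^+)$. Pairing with $f\in\mathcal{A}$ and using the duality and the $\alpha$-version of i), the first sum is bounded by $\mathrm{const}\cdot2^{-n}\sum_{k=1}^{J}e^{-\alpha(k-1)/2^n}\le\mathrm{const}\cdot s$ while the tail vanishes as $n\to\infty$; passing to the limit gives $\sup_{f}\int_E(u_1-P^\alpha_s u_1)f\,d\mu\le\mathrm{const}\cdot s$ for all $s\ge0$, and likewise for $u_2$. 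It then remains to convert this $\alpha$-increment bound into the bound in i) through $|P_tu_i-u_i|\le e^{\alpha t}(u_i-P^\alpha_t u_i)+(e^{\alpha t}-1)u_i$: for small $t$ this is linear in $t$ (since $e^{\alpha t}-1\le\mathrm{const}\cdot t$ there), whereas for large $t$ one simply uses $\int_E|P_tu_i-u_i|f\,d\mu\le\int_E u_i\,\widehat{P}_tf\,d\mu+\int_E u_if\,d\mu\le2\sup_{f}\int_E(u_1+u_2)f\,d\mu$, which is already dominated by $\mathrm{const}\cdot t$ once $t$ is bounded below. Combining the two regimes shows each $u_i$ satisfies i), completing the proof of i) $\Rightarrow$ ii).
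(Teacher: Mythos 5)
Your argument is essentially the paper's own proof: the reduction to the $\alpha$-subordinated semigroup, the bound on $\int_E V^\alpha_{\tau_n}(u)f\,d\mu$ via the invariance $\widehat{P}_s\mathcal{A}\subset\mathcal{A}$, the construction $u_i=\sup_n u_i^{\tau_n}$, and the telescoping estimate for $u_i^{\tau_n}-P^\alpha_s u_i^{\tau_n}$ all match. The one place where you are too quick is the phrase ``passing to the limit gives $\sup_f\int_E(u_1-P^\alpha_s u_1)f\,d\mu\le \mathrm{const}\cdot s$ for all $s\ge 0$'': the telescoping only yields this for $s$ in $\bigcup_n\tau_n$, and the extension to arbitrary $s$ is exactly the point to which the paper devotes a separate argument. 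There one takes dyadic $r_k\searrow t$, dominates $0\le u_i-P^{\alpha}_{r_k}u_i\le u_i\in L^1(f\cdot\mu)$, and obtains $f\cdot\mu$-a.e.\ convergence of $P^\alpha_{r_k}u_i$ to $P^\alpha_t u_i$ along a subsequence from the strong continuity of $(P^\alpha_t)$ on $L^1(\nu)$ with $\nu=\widehat{U}_\alpha f\cdot\mu$ and the absolute continuity $f\cdot\mu\ll\nu$. (Alternatively, once the $u_i$ have been replaced by genuinely $\alpha$-excessive versions, $r\mapsto P^\alpha_r u_i$ is decreasing, so for $r_k\ge t$ one has $u_i-P^\alpha_t u_i\le u_i-P^\alpha_{r_k}u_i$ pointwise and the dyadic bound passes to all $t$ by monotonicity; but some such argument must be supplied, since the passage to a $\mu$-version and the restriction to dyadic times are precisely where the construction is fragile.) The remaining conversion from the $P^\alpha_t$-increment bound to condition i) for $P_t$, split into small and large $t$, is correct and equivalent to the paper's one-line estimate.
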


\begin{proof}
Since ii) $\Rightarrow$ i) is clear, let us prove the other implication. 
Assume that $u$ satisfies i).
Then taking $\widehat{P}_s f$ instead of $f$ in condition i) we get for all $s, t \geq 0$
$$
\int_E P_s^{\alpha} |P_t^{\alpha} u- u| f d\mu \leq \int_E P_s^{\alpha}|P_tu - u| f d\mu + 
\int_E P_s^{\alpha}|P_tu - P_t^{\alpha}u| f d\mu 
$$

\qquad \qquad \qquad \qquad \qquad \quad \;\;\; $\leq \left[ {const} \cdot t + 
(1 - e^{-\alpha t})\| u \|_p \| f \|_{p^{\ast}} \right] e^{-\alpha s}$

\vspace{0.2cm}

\qquad \qquad \qquad \qquad \qquad \quad \;\;\; $\leq {const} \cdot t e^{-\alpha s}$.

\vspace{0.2cm}

Let now $\tau_n := \left\{ \dfrac{k}{2^n} : 0 \leq k \leq n 2^n \right\}$, $n \geq 1$. Then, for $\alpha > 0$

\qquad \qquad \; 
$\int_E \mathop{\sup}\limits_{n} V_{\tau_n}^{\alpha}(u) f d\mu = 
\mathop{\lim}\limits_{n} \mathop{\sum}\limits_{k=1}^{n 2^n} \int_E P_\frac{k-1}{2^n}^{\alpha} |P_\frac{1}{2^n}^{\alpha} u - u| f d\mu$

\vspace{0.2cm}

\qquad \qquad \qquad \qquad \qquad \quad \;\;\;\;\;
$\leq {const} \cdot \mathop{\lim}\limits_{n} \mathop{\sum}\limits_{k=1}^{n 2^n} e^{-\alpha\frac{k-1}{2^n}} \frac{1}{2^n} $

\vspace{0.2cm}

\qquad \qquad \qquad \qquad \qquad \quad \;\;\;\;\;$= {const} \cdot \int_0^{\infty} e^{-\alpha t} dt < \infty$

\vspace{0.2cm}

\noindent for all $f \in \mathcal{A}$. 
Hence $\mathop{\sup}\limits_{n} V_{\tau_n}^{\alpha}(u) < \infty$ $\mu$-a.e. 
and by Proposition \ref{prop 4.2} we have that $u = u_1 - u_2$ $\mu$-a.e. with $u_1, u_2 \in E(\mathcal{U}^{\alpha})$. 
Moreover, inspecting the way $u_1$ and $u_2$ have been constructed, 
we have that $u_1 + u_2 = \mathop{\sup}\limits_{n} V_{\tau_n}^\alpha(u)$ $\mu$-a.e., hence 
$\mathop{\sup}\limits_{f \in \mathcal{A}} \int_E (u_1 + u_2) f d\mu < \infty$.
Moreover, for $r \in \mathop{\bigcup}\limits_{n \geq 1} \tau_n$ and $i=\overline{1,2}$, 
$$
u_i = \mathop{\lim}\limits_{n} \{ \mathop{\sum}\limits_{k=1}^{r2^n} P_{\frac{k-1}{2^n}}^{\alpha}(u - P_\frac{1}{2^n}^{\alpha}u)^{\pm} + P_r^{\alpha}(u - P_\frac{1}{2^n}^{\alpha}u)^{\pm} 
$$

\qquad \qquad \qquad \qquad \qquad 
$+ \mathop{\sum}\limits_{i=1}^{n2^n} P_r^{\alpha}P_\frac{i-1}{2^n}^{\alpha}( u - P_\frac{1}{2^n}^{\alpha}u)^{\pm} + 
P_r^{\alpha}P_{n - r}^{\alpha}(u^{\pm})\}
$

\vspace{0.2cm}
\qquad \qquad \qquad \quad \quad \; 
$= \mathop{\lim}\limits_{n} \left\{\mathop{\sum}\limits_{k=1}^{r2^n} P_\frac{k-1}{2^n}^{\alpha}(u - P_\frac{1}{2^n}^{\alpha}u)^{\pm} + P_r^{\alpha}(u - P_\frac{1}{2^n}^{\alpha}u)^{\pm}\right\} + P_r^{\alpha} u^i$.

\noindent
Therefore

\;\;\;\;\;\; \;\; \; $\int_E |u_i - P_r^{\alpha}u_i| f d\mu \leq \mathop{\lim}\limits_{n} \mathop{\sum}\limits_{k=1}^{r 2^n} 
\int_E P_\frac{k-1}{2^n}^{\alpha}|u - P_\frac{1}{2^n}^{\alpha}u| f d\mu$

\vspace{0.2cm}
\qquad \qquad \qquad \qquad $\leq {const} \cdot \int_0^r e^{-\alpha t} dt$ 

\vspace{0.2cm}
\qquad \qquad \qquad \qquad $= {const} \cdot r$

\vspace{0.2cm}
\noindent for all $f \in \mathcal{A}$, $i = \overline{1,2}$, $r \in \mathop{\bigcup}\limits_{n \geq 1}\tau_n$, 
where the above constant is independent of 
$f \in \mathcal{A}$, $i = \overline{1,2}$, and $r \in \mathop{\bigcup}\limits_{n \geq 1}\tau_n$.

We claim that $\int_E (u_i - P_t^{\alpha}u_i)f d\mu \leq {\rm const} \cdot t$ 
for all $t \geq 0$, $i = \overline{1,2}$, and $f \in \mathcal{A}$. 
Since the desired inequality holds for all 
$r \in \mathop{\bigcup}\limits_{n \geq 1}\tau_n$ and $0 \leq u_i - P_r^{\alpha}u_i \leq u_i$, 
by dominated convergence it is sufficient to show that for each $f \in \mathcal{A}$, 
$P_{r_k}^{\alpha}u_i$ converges $f\cdot\mu$-a.e. on a subsequence to 
$P_t^{\alpha}u_i$, whenever $\mathop{\bigcup}\limits_{n \geq 1}\tau_n \ni r_k \mathop{\searrow}\limits_{k} t \geq 0$.
To see this, let $\nu := \widehat{U}_{\alpha}f \cdot \mu$ and note that $u_i \in L^1(\nu)$. 
Since $\nu$ is a sub-invariant measure for $(P_t^{\alpha})_{t \geq 0}$ 
we have that $(P_t^{\alpha})_{t \geq 0}$ is strongly continuous on $L^1(\nu)$, 
hence if $\mathop{\bigcup}\limits_{n \geq 1}\tau_n \ni r_k \searrow t \geq 0$ 
it follows that on a subsequence, $(P_{r_k}^{\alpha}u_i)_{k \geq 1}$ converges $\nu$-a.e. to $P_t^{\alpha}u_i$. 
Since $f\cdot \mu \ll \nu$ we obtain that the above convergence holds $f \cdot \mu$-a.e. 
So,
$$
\int_E|u_i - P_t^{\alpha}u_i|f d\mu \leq { const} \cdot t, 
$$
and finally
$$
\int_E |u_i - P_tu_i|f d\mu \leq {const} \cdot t + (1 - e^{-\alpha t})\int_E u_i f d\mu \leq {const} \cdot t
$$
for all $t \geq 0$, $i = \overline{1,2}$, and independently on $f \in \mathcal{A}$.
\end{proof}

We can interpert condition i) from Proposition \ref{prop 4.4} in terms of the adjoint generator as follows.

\begin{prop} \label{prop 4.5} Let $p \in (1, \infty)$ and $q \in [1, \infty]$. 
The following assertions are equivalent for $u \in L^p(\mu)$.

\vspace{0.2cm}
i) $|\mu(u \; \widehat{\sf L}_{p^{\ast}}v)| \leq {const} \cdot (\|v\|_{\infty} + \|v\|_q)$ for all $v \in D(\widehat{\sf L}_{p^{\ast}})$.

\vspace{0.2cm}
ii) $u$ satisfies i) from Proposition \ref{prop 4.4} for all $\mathcal{A} = \{ v \in L^{p^\ast}(\mu) :  \|v \|_\infty + \| v\|_q \leq 1 \}$. 
\end{prop}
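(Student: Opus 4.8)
The plan is to pass between the two conditions through the duality of $(P_t)_{t\ge0}$ and $(\widehat P_t)_{t\ge0}$ on $L^{p^\ast}(\mu)$, which converts $\widehat{\sf L}_{p^\ast}$ into the infinitesimal increment of $P_tu-u$. For $v\in D(\widehat{\sf L}_{p^\ast})$ the adjoint relation $\mu(u\,\widehat P_tv)=\mu((P_tu)v)$ gives
\[
\mu\big(u(\widehat P_tv-v)\big)=\mu\big((P_tu-u)v\big),\qquad t\ge0,
\]
and, dividing by $t$ and letting $t\downarrow0$ (here $\tfrac1t(\widehat P_tv-v)\to\widehat{\sf L}_{p^\ast}v$ strongly in $L^{p^\ast}(\mu)$ while $u\in L^p(\mu)$ induces a continuous functional, and $P_tu-u\in L^p(\mu)$),
\[
\mu\big(u\,\widehat{\sf L}_{p^\ast}v\big)=\lim_{t\downarrow0}\tfrac1t\,\mu\big((P_tu-u)v\big).
\]
Writing $\mathcal A=\{v\in L^{p^\ast}(\mu):\|v\|_\infty+\|v\|_q\le1\}$, I would also record the reduction
\[
\sup_{f\in\mathcal A}\int_E|P_tu-u|\,f\,d\mu=\sup_{h\in\mathcal A}\mu\big((P_tu-u)h\big),
\]
obtained by choosing $h=f\cdot\mathrm{sgn}(P_tu-u)$ and using that $\mathcal A$ is invariant under $h\mapsto|h|$; this recasts both conditions as bounds on the pairing $\mu((P_tu-u)h)$.

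For ii)$\Rightarrow$i), fix $v\in D(\widehat{\sf L}_{p^\ast})$. Since $|v|\in\mathcal A$ whenever $\|v\|_\infty+\|v\|_q\le1$, condition ii) yields $|\mu((P_tu-u)v)|\le\int_E|P_tu-u|\,|v|\,d\mu\le\mathrm{const}\cdot t$, and the limit identity above gives $|\mu(u\,\widehat{\sf L}_{p^\ast}v)|\le\mathrm{const}$. Homogeneity in $v$ then produces $|\mu(u\,\widehat{\sf L}_{p^\ast}v)|\le\mathrm{const}\cdot(\|v\|_\infty+\|v\|_q)$ for every $v\in D(\widehat{\sf L}_{p^\ast})$, which is i).

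The substantive direction is i)$\Rightarrow$ii), and here I would bound $\mu((P_tu-u)h)$ uniformly over $h\in\mathcal A$. For $h\in D(\widehat{\sf L}_{p^\ast})\cap\mathcal A$, writing $\widehat P_th-h=\int_0^t\widehat P_s\widehat{\sf L}_{p^\ast}h\,ds$ as a Bochner integral in $L^{p^\ast}(\mu)$ and interchanging with the continuous functional $\mu(u\,\cdot)$,
\[
\mu\big((P_tu-u)h\big)=\mu\big(u(\widehat P_th-h)\big)=\int_0^t\mu\big(u\,\widehat{\sf L}_{p^\ast}(\widehat P_sh)\big)\,ds,
\]
where I used that $\widehat P_s$ commutes with $\widehat{\sf L}_{p^\ast}$ on the domain. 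Because $\mu$ is sub-invariant, $(P_t)$ is $L^1$- and $L^\infty$-contractive, hence $(\widehat P_t)$ is contractive on both $L^\infty(\mu)$ and $L^q(\mu)$, so $\widehat P_sh\in\mathcal A$; applying i) to $v=\widehat P_sh$ bounds each integrand by $\mathrm{const}$ and gives $|\mu((P_tu-u)h)|\le\mathrm{const}\cdot t$. For a general $h\in\mathcal A$ I would approximate by $h_\alpha:=\alpha\widehat U_\alpha h\in D(\widehat{\sf L}_{p^\ast})$: since $\alpha\widehat U_\alpha$ is likewise an $L^\infty$- and $L^q$-contraction one has $h_\alpha\in\mathcal A$, while $h_\alpha\to h$ in $L^{p^\ast}(\mu)$ by strong continuity, so $\mu((P_tu-u)h_\alpha)\to\mu((P_tu-u)h)$. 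Taking the supremum over $h\in\mathcal A$ and using the reduction recovers condition i) of Proposition \ref{prop 4.4}.

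The main obstacle is precisely this extension from the generator domain to all of $\mathcal A$: condition i) speaks only about $v\in D(\widehat{\sf L}_{p^\ast})$, so the argument hinges on the smoothed functions $\widehat P_sh$ and $\alpha\widehat U_\alpha h$ staying inside $\mathcal A$, that is, on the simultaneous $L^\infty$- and $L^q$-contractivity of the dual semigroup and resolvent. Establishing these contractivities from the sub-invariance of $\mu$, together with the justification of differentiating $s\mapsto\widehat P_sh$ and interchanging the time integral with $\mu(u\,\cdot)$, is the only step demanding genuine care; the remainder is routine.
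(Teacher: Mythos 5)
Your proof is correct, and the direction ii) $\Rightarrow$ i) coincides with the paper's argument. The direction i) $\Rightarrow$ ii) reaches the same estimate by a slightly longer route: you first establish $|\mu((P_tu-u)h)|\leq \mathrm{const}\cdot t$ for $h\in D(\widehat{\sf L}_{p^\ast})\cap\mathcal{A}$ by writing $\widehat P_th-h=\int_0^t\widehat P_s\widehat{\sf L}_{p^\ast}h\,ds$ and applying i) to each $\widehat P_sh$, and then you remove the domain restriction by $\alpha\widehat U_\alpha$-smoothing. The paper short-circuits both steps by applying hypothesis i) to the single test function $v:=\int_0^t\widehat P_s w\,ds$ with $w:=\tfrac1t\,\mathrm{sgn}(P_tu-u)f$: this $v$ lies in $D(\widehat{\sf L}_{p^\ast})$ automatically (regardless of whether $w$ does, which it typically does not because of the $\mathrm{sgn}$ factor), satisfies $\widehat{\sf L}_{p^\ast}v=\widehat P_tw-w$, and obeys $\|v\|_\infty+\|v\|_q\leq\|f\|_\infty+\|f\|_q$ by the same $L^\infty$/$L^q$-contractivity of $(\widehat P_s)$ that you invoke; the identity $\mu(u\,\widehat{\sf L}_{p^\ast}v)=\tfrac1t\int_E|P_tu-u|f\,d\mu$ then gives the conclusion in one stroke, with no approximation or density argument. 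Both proofs rest on exactly the same ingredients (duality, the fundamental theorem of calculus for the dual semigroup, and contractivity of $\widehat P_t$ on $L^\infty$ and $L^q$ derived from sub-invariance of $\mu$ and sub-Markovianity of $P_t$), so the difference is one of economy rather than substance; your extra care in justifying the passage from the generator domain to all of $\mathcal A$ is sound but can be avoided.
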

\qquad
\begin{proof} 
i) $\Rightarrow$ ii). Let $f \in L^{\infty}(\mu) \cap L^q(\mu)\cap L^{p^\ast}(\mu)$.
For $t \geq 0$ let $w := \dfrac{1}{t}{\rm sgn}(P_t u - u) f \in L^{p^{\ast}}(\mu)$ 
and $v := \int_0^t \widehat{P}_s w ds \in D(\widehat{\sf L}_{p^\ast})$. 
Then $\widehat{\sf L}_{p^{\ast}}v = \widehat{P}_t w - w$, $\|v\|_{\infty} + \|v\|_q \leq 2(\|f\|_{\infty} + \|f\|_q)$, and 
$$
\frac{1}{t}\int_E|P_tu - u| f d\mu = \int_E (P_tu - u)w d\mu = \int_E u(\widehat{P}_t w - w) d\mu
$$

\qquad \qquad \qquad \qquad \qquad \quad \quad \;\;\;\; 
$= \int_E u\; \widehat{\sf L}_{p^{\ast}}v d\mu \leq 2 \cdot  { const} \cdot (\|f\|_{\infty} + \|f\|_q)$.

\vspace{0.2cm}
Therefore, $\int_E |P_tu - u|f d\mu \leq { const} \cdot t$ for all $t \geq 0$ and $f \in \mathcal{A}$.

\vspace{0.2cm}
ii) $\Rightarrow$ i). If $\int\limits_E |P_t u - u|f d\mu \leq const \cdot t(\|f\|_\infty + \|f\|_q)$, 
then by replacing $f$ with ${\rm sgn}(P_tu-u)f$ we get 
$$\dfrac{1}{t}\int\limits_E u(\widehat{P}_t f - f) d\mu \leq const \cdot (\|f\|_\infty + \|f\|_q).$$ 
Now, if $f \in D(\widehat{\sf L}_{p^\ast})$ then assertion i) follows by letting $t$ tend to $0$.
\end{proof}

\noindent
{\bf Example: adding jumps to a Markov process.} 
Assume that $X$ is a standard process and $\sf N$ is a Markov kernel on $E$. 
As before, $\mu$ is a $\sigma$-finite sub-invariant measure for $\mathcal{U}$. 
We assume further that $\mu \circ {\sf N} \leq \mu$. 
It is well known that there exists a second Markov process $Y$ on $E$ 
whose infinitesimal generator is given by ${\sf Q} := {\sf L} - 1 + {\sf N}$; 
$D({\sf L}) = D({\sf Q})$; cf. \cite{Ba79} or \cite{BeSt94}, Theorem 1.8; 
see also \cite{Op16} for more general perturbations with kernels for generators of Markov processes. 
Let $\mathcal{V} = (V_{\alpha})_{\alpha}$ denote the resolvent of $Y$. 
Then $V_{\alpha} = U_{\alpha + 1} + U_{\alpha +1}{\sf N}V_{\alpha}$ and

$$
\mu(V_1f) = \mu( \mathop{\sum}\limits_{n = 0}^{\infty} U_2({\sf N}U_2)^nf) \leq 
\mathop{\sum}\limits_{n = 1}^{\infty} \dfrac{1}{2^n} \mu(f) = \mu(f)
$$
for all $f \in L^1_+(\mu)$, which means that $\mu$ is $\mathcal{V}$ - sub-invariant. 
Therefore, we can extend $\sf Q$ on $L^p(\mu)$, $1 \leq p < \infty$ by 
${\sf Q}_p := {\sf L}_p - 1 + {\sf N}_p$, $D({\sf Q}_p) = D({\sf L}_p)$,
where ${\sf L}_p$ and ${\sf N}_p$ are the corresponding $L^p(\mu)$-extensions of ${\sf L}$ and ${\sf N}$.

Let $(S_t)_{t \geq 0}$ be the transition function of $Y$. 
Since $\mu$ is $(S_t)_{t \geq 0}$-sub-invariant we have that $(S_t)_{t \geq 0}$ 
extends to a $C_0$-semigroup of contractions on $L^p(\mu)$, $1 \leq p < \infty$, 
for which we keep the same notation. 

Clearly, since $E(\mathcal{V}_{\alpha}) \subset E(\mathcal{U}_{\alpha + 1})$, 
we get by Corollary \ref{coro 2.7} that any $\alpha$-quasimartingale function 
for $Y$ is an $(\alpha + 1)$-quasimartingale function for $X$.
Also, as remarked in \cite{BeLu16}, Proposition 4.5, the spaces of 
differences of bounded functions from $E(\mathcal{U}_{\alpha + 1})$ 
and respectively from $E(\mathcal{V}_{\alpha})$ are the same.
Next, we show that the class of quasimartingale functions which are 
produced by the estimate in Proposition \ref{prop 4.4}, i) (and in Corollary 4.5, ii)) are the same for both $X$ and $Y$.

\begin{coro} \label{coro 4.6}
Let $1 \leq p < \infty$, $u \in L^p(\mu)$, and $\mathcal{A}$ be a bounded subset in $L^{p^{\ast}}$.
Then i) from Proposition \ref{prop 4.4} is satisfied w.r.t. $(P_t)_{t \geq 0}$ if and only if it is satisfied w.r.t. $(S_t)_{t \geq 0}$.
\end{coro}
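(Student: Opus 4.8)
The plan is to exploit the fact that the two generators differ by a bounded operator and to propagate the estimate in condition i) through the corresponding variation-of-constants formula. First I would observe that ${\sf N}$ extends to a contraction on every $L^p(\mu)$, $1\le p\le\infty$: indeed $\mu\circ{\sf N}\le\mu$ gives contractivity on $L^1(\mu)$, while ${\sf N}1=1$ gives contractivity on $L^\infty(\mu)$, so by Riesz--Thorin interpolation $\|{\sf N}_p\|_p\le 1$. Consequently ${\sf B}:={\sf N}_p-{\rm id}$ is a bounded operator on $L^p(\mu)$ with $\|{\sf B}\|_p\le 2$, and ${\sf Q}_p={\sf L}_p+{\sf B}$ is a bounded perturbation of the generator of $(P_t)_{t\ge0}$.

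Since $(S_t)_{t\ge0}$ is the $C_0$-semigroup generated by ${\sf Q}_p$, the Dyson--Phillips theory yields the variation-of-constants identity
$$ S_tu=P_tu+\int_0^t S_{t-s}{\sf B}P_su\,ds,\qquad u\in L^p(\mu),\ t\ge0. $$
To control the correction term when tested against $\mathcal{A}$, I would use successively the positivity of $S_{t-s}$, the duality $\int_E (S_r g)f\,d\mu=\int_E g\,\widehat{S}_rf\,d\mu$ (valid because $\mu$ is $(S_t)$-sub-invariant), and H\"older's inequality:
$$ \int_E\Big|\int_0^t S_{t-s}{\sf B}P_su\,ds\Big|f\,d\mu\le\int_0^t\int_E|{\sf B}P_su|\,\widehat{S}_{t-s}f\,d\mu\,ds\le\int_0^t\|{\sf B}P_su\|_p\,\|\widehat{S}_{t-s}f\|_{p^\ast}\,ds. $$
Now $\|{\sf B}P_su\|_p\le2\|u\|_p$ by contractivity of $(P_s)$ on $L^p(\mu)$, and $\|\widehat{S}_{t-s}f\|_{p^\ast}\le\|f\|_{p^\ast}$ since $(\widehat{S}_t)$ is contractive on $L^{p^\ast}(\mu)$; as $\mathcal{A}$ is bounded in $L^{p^\ast}(\mu)$ the right-hand side is at most ${\rm const}\cdot t$ uniformly in $f\in\mathcal{A}$. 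Combining this with the triangle inequality gives
$$ \sup_{f\in\mathcal{A}}\int_E|S_tu-u|f\,d\mu\le\sup_{f\in\mathcal{A}}\int_E|P_tu-u|f\,d\mu+{\rm const}\cdot t, $$
so condition i) for $(P_t)$ implies condition i) for $(S_t)$.

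For the reverse implication I would run the identical argument after writing ${\sf L}_p={\sf Q}_p-{\sf B}$, so that $(P_t)$ is the bounded perturbation of $(S_t)$ by $-{\sf B}$; here one uses instead the sub-invariance of $\mu$ for $(P_t)$ (hence contractivity of $(\widehat{P}_t)$ on $L^{p^\ast}(\mu)$) together with contractivity of $(S_s)$ on $L^p(\mu)$, and the estimate is completely symmetric. Note that, in contrast with Proposition \ref{prop 4.5}, this route never requires $p>1$ nor the specific form of $\mathcal{A}$, so it covers the full range $1\le p<\infty$ and an arbitrary bounded $\mathcal{A}\subset L^{p^\ast}(\mu)$. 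The main point to justify carefully is the passage through the variation-of-constants formula at the level of $L^p(\mu)$ --- that ${\sf Q}_p$ with $D({\sf Q}_p)=D({\sf L}_p)$ really generates $(S_t)$ and that the Bochner-integral manipulations together with the interchange of $\int_0^t$ and the $\mu$-integration are legitimate; once ${\sf B}$ is known to be bounded this is standard, and the remaining inequalities are just positivity, duality, and H\"older.
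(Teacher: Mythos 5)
Your proof is correct and follows essentially the same route as the paper: the key point in both is that ${\sf Q}_p$ is a bounded perturbation of ${\sf L}_p$, which forces $\|S_tu-P_tu\|_p\le{\rm const}\cdot t\,\|u\|_p$, after which H\"older's inequality and the boundedness of $\mathcal{A}$ in $L^{p^\ast}(\mu)$ transfer condition i) between the two semigroups symmetrically. The only difference is cosmetic: the paper obtains the estimate $\|P_t-S_t\|_p\le ct$ by citing \cite{EnNa99}, Corollary 1.11, whereas you rederive it by hand from the variation-of-constants formula.
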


\begin{proof}
The result follows easily since ${\sf Q}_p$ is a bounded perturbation of 
${\sf L}_p$, and by e.g. \cite{EnNa99}, Corollary 1.11, 
there exists a constant $c$ s.t. $\| P_t - S_t\|_p \leq t \cdot c$, $t \geq 0$. 
\end{proof}

\section{Applications to Dirichlet forms}

Let $E$ be a Hausdorff topological space with Borel $\sigma$-algebra 
$\mathcal{B}$, $\mu$ be a $\sigma$-finite measure on $\mathcal{B}$, 
and $\mathcal{E}$ be a bilinear form on $L^2(\mu)$ with dense domain 
$\mathcal{F}$; $\mathcal{E}_{\alpha}(\cdot , \cdot) = \mathcal{E}(\cdot , \cdot) + \alpha(\cdot , \cdot)$, $\alpha > 0$.

Recall that $(\mathcal{E}, \mathcal{F})$ is called a {\it coercive closed} form if:

i) $(\mathcal{E}, \mathcal{F})$ is positive definite and closed on $L^2(\mu)$.

ii) $\mathcal{E}$ satisfies the (weak) sector condition, i.e. there exists a constant $k$ s.t.
$$
|\mathcal{E}_1(u, v)| \leq k \mathcal{E}_1(u,u)^{\frac{1}{2}}\mathcal{E}_1(v,v)^{\frac{1}{2}} \; {\rm for \; all} \; u,v \in \mathcal{F}.
$$

The coercive closed form $(\mathcal{E}, \mathcal{F})$ is called a {\it Dirichlet} form if $u^+ \wedge 1 \in \mathcal{F}$ and both

iii) $\mathcal{E}(u + u^+ \wedge 1, u - u^+ \wedge 1) \geq 0$

iv) $\mathcal{E}(u - u^+ \wedge 1, u + u^+ \wedge 1) \geq 0$

\noindent hold for all $u \in \mathcal{E}$. 
If only iii) is satisfied then $(\mathcal{E}, \mathcal{F})$ is called a {\it semi-Dirichlet} form.

A bilinear form $(\mathcal{E}, \mathcal{F})$ on $L^2(\mu)$ is called a 
{\it lower-bounded} (semi) Dirichlet form if there exists $\alpha > 0$ s.t. 
$(\mathcal{E}_{\alpha}, \mathcal{F})$ is a (semi) Dirichlet form. 
If $(\mathcal{E}, \mathcal{F})$ is a coercive closed form, let $(P_t)_{t \geq 0}$ 
be the $C_0$-semigroup of contractions on $L^2(m)$ associated to $\mathcal{E}$, 
whose dual is denoted by $(\widehat{P}_t)_{t \geq 0}$.
Recall that condition iii) (resp. iv)) is equivalent with the sub-Markov 
property for $(P_t)_{t \geq 0}$ (resp. $(\widehat{P}_t)_{t \geq 0}$); see \cite{MaRo92}, I.4.4. 

Adopting the notations from \cite{Fu99}, for a closed set $F \subset E$ we set:

\vspace{0.2cm}
\qquad $\mathcal{F}_F = \{v \in \mathcal{F} \; : \; v = 0 \; m\mbox{-a.e. on} \; E \setminus F \}$, 

\vspace{0.2cm}
\quad \; $\mathcal{F}_{b, F} = \{v \in \mathcal{F}_{F} \; : \; v \in L^\infty(\mu)\}$.

An increasing sequence of closed sets $(F_n)_{n \geq 1}$ 
is called an {\it $\mathcal{E}$-nest} if $\mathop{\bigcup}\limits_{n = 1}^{\infty} \mathcal{F}_{F_n}$ is $\mathcal{E}_1$-dense in $\mathcal{F}$.
An element $f \in \mathcal{F}$ is called $\mathcal{E}$-quasi-continuous 
if there exists a nest $(F_n)_{n \geq 1}$ such that $f|_{F_n}$ is continuous for each $n \geq 1$.

A (semi) Dirichlet form $(\mathcal{E}, \mathcal{F})$ on $L^2(\mu)$ 
is called {\it quasi-regular} if there exist an $\mathcal{E}$-nest consisting of compact sets, 
an $\mathcal{E}_1$-dense subset of $\mathcal{F}$ 
whose elements admit $\mathcal{E}$-quasi-continuous versions, 
and a countable family of $\mathcal{E}$-quasi-continuous elements from $\mathcal{F}$ 
which separates the points of $\mathop{\bigcup}\limits_{n=1}^{\infty} E_n$ for a certain $\mathcal{E}$-nest $(E_n)_{n \geq 1}$.
It is well known that the quasi-regularity property is a necessary and sufficient condition 
for a semi-Dirichlet form to be (properly) associated to a $\mu$-tight special standard process $X$ 
(i.e. the semigroup $(P_t)_t$ of $(\mathcal{E}, \mathcal{F})$ is generated by the transition function of $X$); 
see \cite{MaOvRo95} or \cite{MaRo92} for details.
On the other hand, it was shown in \cite{BeBoRo06} 
that for any semi-Dirichlet form $(\mathcal{E}, \mathcal{F})$ 
on a Lusin measurable space $E$, one can always find a larger space $E_1$ s.t. 
$E_1 \setminus E$ has measure zero and $(\mathcal{E}, \mathcal{F})$ regarded on $E_1$ becomes quasi-regular.

Hereinafter, all of the forms are assumed to be quasi-regular, 
in particular every element $u \in \mathcal{F}$ admits a quasi-continuous version denoted by $\widetilde{u}$.

In the sequel we  often appeal to the following well known decompositions for the elements of the domain $\mathcal{F}$:

\noindent 
{\it Ortogonal decomposition via hitting distribution.} 
For a nearly Borel set $A \subset E$ and a quasi-continuous function $u \in \mathcal{F}$ 
we define the $\alpha$-order hitting distribution 
$R_\alpha^{A^c} u(x) := \mathbb{E}^x[e^{-\alpha T_{A^c}}u(X_{T_{A^c}})]$, $\alpha > 0$.
Then $R_\alpha^{A^c} u \in \mathcal{F}$ is quasi-continuous, 
$u-R_\alpha^{A^c} u \in \mathcal{F}_{A}$, 
and $\mathcal{E}_{\alpha}(R_\alpha^{A^c} u,v) = 0$ for all $v\in \mathcal{F}_A$.
When $\mathcal{E}$ is a Dirichlet form, $\widehat{R}_\alpha^{A^c} u$ 
may be defined analogously, replacing $X$ with the dual process $\widehat{X}$.
When $\mathcal{E}$ is merely a semi-Dirichlet form, the existence of the dual process is more delicate, 
and for simplicity we prefere to define $\widehat{R}_\alpha^{A^c} u$ 
as the unique element from $\mathcal{F}$ such that 
$u-\widehat{R}_\alpha^{A^c} u \in \mathcal{F}_{A}$ and $\mathcal{E}_{\alpha}(v,R_\alpha^{A^c} u) = 0$ 
for all $v\in \mathcal{F}_A$; see e.g. \cite{Os13}, Section 3.5.

\noindent
{\it Fukushima's decomposition.} (see \cite{MaRo92}, Chapter VI, Theorem 2.5, or \cite{FuOsTa11}) 
For each $u \in \mathcal{F}$ there exist a martingale additive functional of finite energy 
$(M_t)_{t \geq 0}$ (MAF) and a continuous additive functional 
$(N_t)_{t \geq 0}$ of zero energy s.t. $\widetilde{u}(X) - \widetilde{u}(X_0) = M + N$; 
we denote by $|N|_t$ the variation of $N$ on $[0,t]$.

\vspace{0.3cm}

For the rest of this section our aim is to explore conditions for an element $u \in \mathcal{F}$ 
(or more generally in $\mathcal{F}_{loc}$) ensuring that $\widetilde{u}(X)$ 
is a $\mathbb{P}^x$-semi($\alpha$-quasi)martingale q.e. $x \in E$; 
in this case we shall say shortly that {\it $\widetilde{u}(X)$ is a semi($\alpha$-quasi)martingale}. 
 
Going back to Proposition \ref{prop 4.4} and Corollary \ref{prop 4.5}, 
we note that the sub-Markov property of the dual semigroup was quite helpful 
and for this reason we shall first deal with Dirichlet forms. 
However, in the end of this section we shall see that the results can be extended 
to semi-Dirichlet forms in their full generality.
It is worth to mention that all of the forthcoming criteria for 
quasimartingale functions can be directly transferred to lower bounded 
(semi) Dirichlet forms whose semigroups are associated to right processes, 
but for simplicity we deal only with (semi) Dirichlet forms.

\begin{thm} \label{thm 5.1} The following assertions are equivalent for an element $u \in \mathcal{F}$.

\vspace{0.2cm}
i) $|\mathcal{E}(u,v)| \leq const \cdot (\|v\|_{\infty} + \|v\|_2)$ for all $v \in \mathcal{F}_b$.

\vspace{0.2cm}
ii) For one (hence all) $\alpha > 0$ there exist $u_1, u_2 \in E(\mathcal{U}^{\alpha})$ such that 

\vspace{0.2cm}
\quad ii.1) $\sup\limits_{\|f\|_{\infty} + \|f\|_2 \leq 1}\int_E (u_1 + u_2) f d\mu < \infty$.

\vspace{0.2cm}
\quad ii.2) $u = u_1 - u_2$ $\mu$-a.e. 

\vspace{0.2cm}
\quad ii.3) $\sup\limits_{\|f\|_{\infty} + \|f\|_2 \leq 1}\int_E |P_t u_i - u_i|f d\mu \leq {const} \cdot t$, $i=1,2$.

\vspace{0.2cm}
iii) For one (hence all) $\alpha > 0$, $\widetilde{u}(X)$ is an $\alpha$-quasimartingale and 
$$
\sup\limits_{\|f\|_{\infty} + \|f\|_2 \leq 1}\mathbb{E}^{f \cdot \mu}[|N|_t] \leq {const} \cdot t.
$$
for sufficiently small $t \geq 0$.

In particular, if $u$ satisfies i) then there exists a smooth measure $\nu$ such that 
$\mathcal{E}(u,v) = \nu(\widetilde{v})$ for all $v \in \mathcal{F}_b$.
\end{thm}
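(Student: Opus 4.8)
The plan is to funnel the entire statement through the $L^2$-theory of Section 4, taking $p=p^\ast=2$ and $q=2$, and to reserve the Fukushima-decomposition assertion iii) and the smooth-measure conclusion for the end.

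First I would prove i) $\Leftrightarrow$ ii). The link between the form and the semigroup is the identity $\mathcal{E}(u,v)=-\mu(u\,\widehat{\sf L}_2 v)$, valid for every $u\in\mathcal{F}$ and every $v\in D(\widehat{\sf L}_2)$; it turns condition i), tested on bounded $v\in D(\widehat{\sf L}_2)$, into precisely condition i) of Proposition \ref{prop 4.5} (with $p=2$, $q=2$). That it suffices to test i) on this smaller class is a density argument: for $v\in\mathcal{F}_b$ I would set $v_\beta:=\beta\widehat U_\beta v\in D(\widehat{\sf L}_2)$ and use the sub-Markov property of the dual semigroup (condition iv) of a Dirichlet form) to get $\|v_\beta\|_\infty\le\|v\|_\infty$ and $\|v_\beta\|_2\le\|v\|_2$, while $v_\beta\to v$ in the $\mathcal{E}_1^{1/2}$-norm forces $\mathcal{E}(u,v_\beta)\to\mathcal{E}(u,v)$; passing to the limit recovers i) over all of $\mathcal{F}_b$. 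Proposition \ref{prop 4.5} then rewrites i) as condition i) of Proposition \ref{prop 4.4} for the set $\mathcal{A}=\{f\ge0:\|f\|_\infty+\|f\|_2\le1\}$, which is $\widehat P_s$-invariant and of full support by sub-Markovianity, and Proposition \ref{prop 4.4} delivers exactly the decomposition ii.1)--ii.3), with $u_1,u_2\in E(\mathcal{U}^\alpha)$, $u=u_1-u_2$ $\mu$-a.e., and the control in ii.3). The converse ii) $\Rightarrow$ i) is the trivial implication in those two propositions.

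Next I would establish ii) $\Leftrightarrow$ iii). Given ii), Corollary \ref{coro 2.7} together with its q.e. refinement (Remark \ref{rem 4.3} ii)) already shows that $\widetilde u(X)$ is an $\alpha$-quasimartingale; the task is to match ii.3) with the variation bound on $N$. Here I would use the canonical decomposition $u=u_1-u_2$ built in the proof of Theorem \ref{thm 2.6} from the positive and negative parts $(u-P_s u)^{\pm}$ (so that $u_1+u_2=\sup_n V_{\tau_n}(u)$): writing $N=N^1-N^2$ for the zero-energy drift parts attached to $u_1(X)$ and $u_2(X)$, each $N^i$ is decreasing and $\mathbb{E}^x[|N^i|_t]=(u_i-P_t^\alpha u_i)(x)$ q.e., so that taking $f\cdot\mu$-expectations gives $\mathbb{E}^{f\cdot\mu}[|N|_t]=\int_E(u_1-P_t^\alpha u_1)f\,d\mu+\int_E(u_2-P_t^\alpha u_2)f\,d\mu$. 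Modulo an $O(t)$ correction coming from the passage between $P_t^\alpha$ and $P_t$ (which is controlled by ii.1), and which is the reason iii) is stated only for small $t$), this identity makes ii.3) and the bound $\sup_f\mathbb{E}^{f\cdot\mu}[|N|_t]\le{\rm const}\cdot t$ equivalent, and both implications then follow.

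Finally, for the \emph{In particular} statement I would note that, once ii)--iii) hold, $N$ is a continuous additive functional of bounded variation whose variation is controlled in the $\|\cdot\|_\infty+\|\cdot\|_2$ pairing; dividing the bound by $t$ and letting $t\downarrow0$ shows, via the Revuz correspondence, that the variation measure of $N$ is a finite smooth measure. Computing $\mathcal{E}(u,v)=\lim_{t\downarrow0}\tfrac1t(u-P_tu,v)_\mu=-\lim_{t\downarrow0}\tfrac1t\,\mathbb{E}^{v\cdot\mu}[N_t]$ and identifying the limit through Revuz's formula exhibits a smooth (signed) measure $\nu$ with $\mathcal{E}(u,v)=\nu(\widetilde v)$ for all quasi-continuous $v\in\mathcal{F}_b$. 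The step I expect to be most delicate is precisely the identity $\mathbb{E}^{f\cdot\mu}[|N|_t]=\sum_i\int_E(u_i-P_t^\alpha u_i)f\,d\mu$, i.e. the claim $|N|_t=|N^1|_t+|N^2|_t$: one must check that the analytic Hahn--Jordan splitting $u_1+u_2=\sup_n V_{\tau_n}(u)$ of Section 2 is the very one realizing the pathwise total variation of the drift, so that the increasing processes dual to $u_1$ and $u_2$ grow on mutually singular sets and no cancellation occurs. This reconciliation of the analytic variation $V(u)$ with the stochastic variation $|N|$ is exactly what lets one bypass the Revuz correspondence in the \emph{sufficiency} direction and obtain $\nu$ only afterwards, as a by-product.
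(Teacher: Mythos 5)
Your treatment of i) $\Leftrightarrow$ ii) is sound and is essentially the paper's own argument: the paper takes $v:=\int_0^t\widehat P_s\bigl(\mathrm{sgn}(P_tu-u)f\bigr)\,ds$ and feeds the resulting bound $\int_E|P_tu-u|f\,d\mu\le \mathrm{const}\cdot t(\|f\|_\infty+\|f\|_2)$ into Proposition \ref{prop 4.4} with $\mathcal{A}=\{f\ge 0:\|f\|_\infty+\|f\|_2\le 1\}$, which is exactly your Proposition \ref{prop 4.5}/\ref{prop 4.4} route; your density argument via $\beta\widehat U_\beta v$ correctly handles the passage back to all of $\mathcal{F}_b$.

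The gap is in your handling of iii). Your plan hinges on the identity $\mathbb{E}^{f\cdot\mu}[|N|_t]=\sum_i\int_E(u_i-P_t^\alpha u_i)f\,d\mu$, i.e.\ on $|N|_t=|N^1|_t+|N^2|_t$ with no cancellation, which you flag as delicate but never establish. Two things go wrong. First, the $u_i$ are merely $\alpha$-excessive, not elements of $\mathcal{F}$, so they carry no Fukushima decomposition; what one actually has (and what the paper uses, via Sharpe) is the Doob--Meyer decomposition $e^{-\alpha t}u_i(X_t)-u_i(X_0)=M^i_t-A^i_t$ of the \emph{discounted} supermartingales, and optional stopping yields only the inequality $\mathbb{E}^x[A^i_t]\le u_i(x)-e^{-\alpha t}P_tu_i(x)$, not your claimed equality. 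Second, undoing the discounting to relate $N$ to $A^2-A^1$ produces, by integration by parts, the extra terms $\alpha e^{\alpha t}\int_0^tM_se^{-\alpha s}\,ds+\widetilde u(X_0)e^{\alpha t}(1-e^{-\alpha t})$; these are not a ``passage between $P_t^\alpha$ and $P_t$'' and must be estimated separately (the paper controls the martingale term through the finite-energy bound $\mathbb{E}^{f\cdot\mu}[|M_t|]\lesssim\|f\|_2\,\mathbb{E}^\mu[M_t^2]^{1/2}$). For ii) $\Rightarrow$ iii) only upper bounds are needed, so that direction survives once these corrections are inserted. But your iii) $\Rightarrow$ ii) genuinely requires the reverse inequality --- mutual singularity of the two pieces of the drift --- and that is an unproven (and unnecessary) claim. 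The paper sidesteps it entirely by closing the cycle as iii) $\Rightarrow$ i): the Revuz correspondence gives $\mathcal{E}(u,v)=\lim_{t\to 0}\tfrac1t\,\mathbb{E}^{v\cdot\mu}[N_t]=\nu(\widetilde v)$, and the hypothesis $\mathbb{E}^{f\cdot\mu}[|N|_t]\le\mathrm{const}\cdot t$ immediately bounds $|\mathcal{E}(u,v)|$ by $\mathrm{const}\cdot(\|v\|_\infty+\|v\|_2)$, using only the one-sided variation estimate. You should replace your iii) $\Rightarrow$ ii) by this iii) $\Rightarrow$ i) argument (which also delivers the ``in particular'' statement, as in your last paragraph) and let ii) follow from i).
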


\begin{proof} i) $\Rightarrow$ ii). Let $t > 0$, $f \in L^{\infty}(\mu) \cap L^2(\mu)$, 
$w := {\rm sgn}(P_tu - u)f$, and set $v := \int_0^t \widehat{P}_s w ds$. 
Then
$$
\int_E |P_tu - u| f d\mu = \int_E (P_tu - u) w d\mu = \mathcal{E}(u,v) \leq const \cdot t (\|f\|_{\infty} + \|f\|_2).
$$

By Proposition \ref{prop 4.4} (take $\mathcal{A} := \{ f \in L^2(\mu): \; \|f\|_{\infty} + \|f\|_2 \leq 1 \}$) we obtain ii).

\vspace{0.2cm}
ii) $\Rightarrow$ iii). Since $\widetilde{u}$ is quasi-continuous, we have  $\widetilde{u} = u_1 - u_2$ q.e.,  
hence the first assertion is clear (by Corollary \ref{coro 3.2} for example).

Since $(e^{-\alpha t}u_i(X_t))_{t \geq 0}$, $i = \overline{1,2}$ are right continuous supermartingales, 
by \cite{Sh88}, Section VI, there exist (uniquely) local martingales $M^i$, $M^i_0 = 0$, 
and predictable right continuous increasing and non-negative processes $A^i$ 
such that $e^{-\alpha t}u_i(X_t) - u_i(X_0) = M^i_t - A^i_t$, $t \geq 0$.

If $(T_n^i)_n$ are stopping times increasing a.s. to infinity such that the stopped processes 
$(M^i_{t \wedge T_n^i})_{t \geq 0}$ are uniformly integrable martingales, we get 
$$
\mathbb{E}^x[A^i_{t \wedge T_n^i}] = 
- \mathbb{E}^x[e^{-\alpha (t \wedge T_n)}u_i(X_{t \wedge T_n})] + u_i(x) \leq u_i(x) - e^{-\alpha t}P_tu_i(x), \; x \in E.
$$
Therefore, $\mathbb{E}^x[A_t^i] \le u_i(x) - e^{-\alpha t}P_tu_i(x)$ and if $f \in L^{\infty}(\mu) \cap L^2(\mu)$
$$
\mathbb{E}^{f\cdot \mu}[A_t^i] \leq 
\mu((u_i - e^{-\alpha t}P_tu_i)f) \leq 
\mu(|u_i - P_tu_i|f) + (1 - e^{-\alpha t})\mu(u_i \widehat{P}_t f)
$$

\qquad \qquad \quad \;\;\; $\leq {const} \cdot t(\|f\|_{\infty} + \|f\|_2)$.

\vspace{0.2cm}\noindent
Also, $e^{-\alpha t}\widetilde{u}(X_t) - \widetilde{u}(X_0) = \overline{M_t} + \overline{A_t}$, 
where $\overline{M} := M^1 - M^2$ is a local martingale 
and $\overline{A} := A^2- A^1$ is a predictable right continuous process of bounded variation.

On the other hand, since $\widetilde{u}(X)$ is an $\alpha$-quasimartingale, 
it follows that $N$, the CAF from Fukushima decomposition, is a continuous semimartingale, 
hence it is the sum of a local martingale and a continuous process with bounded variation (see e.g. \cite{Pr05}, page 131). 
But $N$ has zero energy so the quadratic variation of its martingale part is zero, hence $N$ is of bounded variation.
Then, integrating by parts,
$$
e^{-\alpha t}\widetilde{u}(X_t) - \widetilde{u}(X_0) = 
\int_0^t e^{-\alpha s}dM_s + e^{-\alpha t}N_t - \alpha \int_0^t M_se^{-\alpha s}ds - \widetilde{u}(X_0)(1 - e^{-\alpha t}).
$$
By the uniqueness of the canonical decomposition of $(e^{-\alpha t}\widetilde{u}(X_t))_{t \geq 0}$ we get that 
$$
\overline{A_t} = e^{-\alpha t}N_t - \alpha \int_0^t M_se^{-\alpha s}ds - \widetilde{u}(X_0)(1 - e^{-\alpha t}). 
$$
Therefore, 
$$
N_t = e^{\alpha t}\overline{A_t} + \alpha e^{\alpha t}\int_0^t M_se^{-\alpha s}ds + 
\widetilde{u}(X_0)e^{\alpha t}(1 - e^{-\alpha t}),
$$
and
$$
|N|_t \leq e^{\alpha t}(A_t^1 + A_t^2) + \alpha e^{\alpha t}\int_0^t |M_s|e^{-\alpha s} ds + 
|\widetilde{u}(X_0)|e^{\alpha t}(1 - e^{-\alpha t}).
$$
But by the previously obtained estimates for $\mathbb{E}^{f \cdot \mu}[A_t^i]$, we get 
$$
\mathbb{E}^{f \cdot \mu}[|N|_t] \leq {const} \cdot t e^{\alpha t}(\|f\|_{\infty} + \|f\|_2) + 
e^{\alpha t}(1 - e^{-\alpha t})\mu(f|u|) + e^{\alpha t}(1 - e^{-\alpha t})t \mathbb{E}^{f \cdot \mu}[|M_t|]
$$

\qquad \;\;\;\; $\leq {const} \cdot t (\|f\|_{\infty} + \|f\|_2)$

\vspace{0.2cm}
\noindent for conveniently small $t$, since $\mathbb{E}^{f \cdot \mu}[|M_t|] \leq \|f\|_2\mathbb{E}^{\mu}[M_t^2]$ and $M$ is of finite energy (i.e. $\lim_{t \to 0} \frac{1}{t}\mathbb{E}^{\mu}[M_t^2] < \infty$).

iii) $\Rightarrow$ i). By Revuz correspondence (see \cite{MaRo92}, Theorem 2.4), 
if $v = \widehat{U}_{\alpha}f$ for some $\alpha > 0$ and $f \in L^2(\mu) \cap L^{\infty}(\mu)$
$$
\mathcal{E}(u,v) = \mathop{\lim}\limits_{t \to 0} \displaystyle\frac{1}{t} \int_E (P_tu - u)v d\mu = 
\mathop{\lim}\limits_{t \to 0} \displaystyle\frac{1}{t} \mathbb{E}^{v \cdot \mu}[N_t] = \nu(\widetilde{v}),
$$
where $\nu$ is the signed Revuz measure associated to $N$. 
By an approximation argument, 
$|\mathcal{E}(u,v)| = |\nu(\widetilde{v})| \leq { const} \; (\|v\|_{\infty} + \|v\|_2)$ for all $v \in \mathcal{F}_b$.
\end{proof}

Further versions of Theorem \ref{thm 5.1} can be taken into account. 
For example, the following result extends Theorem 6.2 from \cite{Fu99} 
to the non-symmetric case and it can be proved in the same manner as Theorem \ref{thm 5.1}, so we omit it.

\begin{thm} \label{thm 5.2} 
The following assertions are equivalent for $u \in \mathcal{F}$.

\vspace{0.2cm}
i) $|\mathcal{E}(u,v)| \leq {const} \cdot \|v\|_{\infty}$ for all $v \in \mathcal{F}_b$.

\vspace{0.2cm}
ii) For each $\alpha > 0$, $\widetilde{u}(X)$ is an $\alpha$-quasimartingale 
and $\mathbb{E}^\mu[|N|_t] \leq {const} \cdot t$ for small $t$.

\vspace{0.2cm}
iii) There exists a smooth signed measure (the Revuz measure of $N$) $\nu$ 
such that $\nu$ is finite and $\mathcal{E}(u,v) = \nu(\widetilde{v})$ for all $v \in \mathcal{F}$.
\end{thm}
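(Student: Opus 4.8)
The plan is to follow the cyclic scheme i) $\Rightarrow$ ii) $\Rightarrow$ iii) $\Rightarrow$ i) exactly as in the proof of Theorem \ref{thm 5.1}, the only structural change being that the right-hand side of i) now controls $\mathcal{E}(u,v)$ by $\|v\|_{\infty}$ alone. Consequently one may run the whole argument with test densities taken from the positive part of the $L^{\infty}$-unit ball rather than from $\{\|f\|_{\infty}+\|f\|_2\le 1\}$, and this stronger control is precisely what upgrades every weighted ($f\cdot\mu$) estimate of Theorem \ref{thm 5.1} to an unweighted one (the case $f\equiv 1$) and forces the resulting Revuz measure to be finite.

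For i) $\Rightarrow$ ii) I would fix $\alpha>0$, and for $t>0$ and $f\in L^{\infty}_+(\mu)\cap L^2(\mu)$ set $w:={\rm sgn}(P_tu-u)f$ and $v:=\int_0^t\widehat{P}_sw\,ds\in\mathcal{F}_b$. Since $(\widehat{P}_s)_s$ is sub-Markovian one has $\|v\|_{\infty}\le t\|f\|_{\infty}$, so exactly as in Theorem \ref{thm 5.1},
\[
\int_E|P_tu-u|f\,d\mu=\mathcal{E}(u,v)\le {\rm const}\cdot t\,\|f\|_{\infty}.
\]
Applying Proposition \ref{prop 4.4} with $p=2$ and $\mathcal{A}:=\{f\in L^2_+(\mu):\|f\|_{\infty}\le 1,\ \|f\|_2\le 1\}$ (which is $\widehat{P}_s$-invariant and whose supports cover $E$ $\mu$-a.e.) yields a decomposition $u=u_1-u_2$ $\mu$-a.e. with $u_1,u_2\in E(\mathcal{U}^{\alpha})$; since $\widetilde{u}=u_1-u_2$ q.e., $\widetilde{u}(X)$ is an $\alpha$-quasimartingale. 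The bound on $|N|_t$ is then obtained as in Theorem \ref{thm 5.1}: writing the Doob--Meyer decompositions $e^{-\alpha t}u_i(X_t)-u_i(X_0)=M^i_t-A^i_t$ and matching them, via Fukushima's decomposition and integration by parts, against the canonical decomposition of $(e^{-\alpha t}\widetilde{u}(X_t))_t$ expresses $N$ (which is of bounded variation, its martingale part having zero energy) in terms of $A^1,A^2,M$. The new ingredient is that the estimate above with $\|f\|_{\infty}$ only, combined with monotone convergence along $f_n\nearrow 1$, promotes the weighted estimates $\mathbb{E}^{f\cdot\mu}[A^i_t]\le{\rm const}\cdot t\|f\|_{\infty}$ to $\mathbb{E}^{\mu}[A^i_t]\le{\rm const}\cdot t$, and hence $\mathbb{E}^{\mu}[|N|_t]\le{\rm const}\cdot t$ for small $t$.

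For ii) $\Rightarrow$ iii) I would invoke the Revuz correspondence (see \cite{MaRo92}, Theorem 2.4): since $N$ is a continuous additive functional of bounded variation, it admits a signed smooth Revuz measure $\nu$ with $\mathcal{E}(u,v)=\nu(\widetilde{v})$ for $v\in\mathcal{F}_b$, and the bound $\mathbb{E}^{\mu}[|N|_t]\le{\rm const}\cdot t$ gives $|\nu|(E)=\lim_{t\to 0}\tfrac1t\mathbb{E}^{\mu}[|N|_t]<\infty$, i.e. $\nu$ is finite; an approximation $\widetilde{v}\nearrow 1$ then extends the identity from $\mathcal{F}_b$ to all of $\mathcal{F}$. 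Finally iii) $\Rightarrow$ i) is immediate, as $|\mathcal{E}(u,v)|=|\nu(\widetilde{v})|\le|\nu|(E)\,\|v\|_{\infty}$ for all $v\in\mathcal{F}_b$.

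I expect the main obstacle to be the passage from the weighted ($f\cdot\mu$, $f\in L^{\infty}\cap L^2$) estimates to the genuinely unweighted ones under $\mu$, together with the finiteness of $\nu$: when $\mu$ is infinite and $u\notin L^1(\mu)$ the term $(1-e^{-\alpha t})\mu(|u|)$ appearing in the bound for $|N|_t$ must be controlled, and it is exactly here that the strengthening of the hypothesis from $\|v\|_{\infty}+\|v\|_2$ to $\|v\|_{\infty}$ is needed, the constants in the approximation $f_n\nearrow 1$ being uniform in $f$.
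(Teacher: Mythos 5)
Your overall scheme --- the cycle i) $\Rightarrow$ ii) $\Rightarrow$ iii) $\Rightarrow$ i), run exactly as for Theorem \ref{thm 5.1} but with test densities controlled by $\|f\|_{\infty}$ alone --- is precisely what the paper intends: it omits the proof, saying only that it goes ``in the same manner as Theorem \ref{thm 5.1}''. The derivation of the quasimartingale property in ii) via Proposition \ref{prop 4.4}, the Revuz step in ii) $\Rightarrow$ iii), and iii) $\Rightarrow$ i) are all fine.

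The genuine soft spot is the unweighted bound $\mathbb{E}^{\mu}[|N|_t]\leq \mathrm{const}\cdot t$, and your proposed fix does not close it. In the proof of Theorem \ref{thm 5.1} one has the pathwise estimate
\[
|N|_t \leq e^{\alpha t}(A^1_t+A^2_t) + \alpha e^{\alpha t}\int_0^t |M_s|e^{-\alpha s}\,ds + |\widetilde{u}(X_0)|\,(e^{\alpha t}-1),
\]
and under $\mathbb{E}^{f\cdot\mu}$ the last two terms are controlled by Cauchy--Schwarz, i.e.\ by multiples of $\|f\|_2\|u\|_2$ and $\|f\|_2\,\mathbb{E}^{\mu}[M_t^2]^{1/2}$. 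The strengthened hypothesis i) improves nothing here: monotone convergence along $f_n\nearrow 1$ produces $\mu(|u|)$ and $\mathbb{E}^{\mu}[|M_t|]$, both of which may be infinite when $\mu(E)=\infty$ (an element $u\in\mathcal{F}\subset L^2(\mu)$ need not belong to $L^1(\mu)$, and $M_t\in L^2(\mathbb{P}^{\mu})$ does not give $M_t\in L^1(\mathbb{P}^{\mu})$ for infinite $\mu$). So the claimed ``uniformity of the constants in $f$'' holds only for the term $\mathbb{E}^{f\cdot\mu}[A^1_t+A^2_t]$, which hypothesis i) does control by $\|f\|_{\infty}$, and fails for the other two. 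A clean repair is to reverse the order: first obtain iii) directly from i) --- by the last assertion of Theorem \ref{thm 5.1} there is a smooth signed measure $\nu$ with $\mathcal{E}(u,v)=\nu(\widetilde{v})$ on $\mathcal{F}_b$, and i) forces $|\nu|(E)\leq \mathrm{const}$ by testing against $v\in\mathcal{F}_b$ with $\|v\|_{\infty}\leq 1$ approximating the Hahn decomposition of $\nu$ --- and then deduce the bound in ii) from the Revuz formula for the increasing CAF $|N|$ (whose Revuz measure is $|\nu|$) together with the sub-invariance of $\mu$, which gives $\mathbb{E}^{\mu}[|N|_t]\leq t\,|\nu|(E)$ without ever invoking the pathwise decomposition of $|N|_t$.
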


Now, we turn our attention to the situation when any of the equivalent assertions of Theorem \ref{thm 5.2} holds only locally. 
The following result extends Theorem 6.1 from \cite{Fu99} to the non-symmetric case.

\begin{thm} \label{thm 5.3} The following assertions are equivalent for $u \in \mathcal{F}$.

\vspace{0.2cm}
i) $\widetilde{u}(X)$ is a semimartingale.

\vspace{0.2cm}
ii) There exists a nest $(F_n)_{n \geq 1}$ and constants $c_n$ such that
$$
|\mathcal{E}(u,v)| \leq c_n \|v\|_{\infty} \;\; for \; all \; v \in \mathcal{F}_{b,F_n}.
$$
\end{thm}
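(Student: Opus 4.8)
The plan is to establish both implications by localization, reducing each to Theorem \ref{thm 5.2} applied to a suitable killed process and then patching along a nest. I rely on two facts: that a semimartingale with bounded jumps is locally a quasimartingale (recorded right after Theorem \ref{thm 2.2}), and that an increasing sequence of closed sets $(F_n)_{n \geq 1}$ is an $\mathcal{E}$-nest precisely when the exit times $\sigma_n := T_{E \setminus F_n}$ satisfy $\sigma_n \nearrow \infty$ $\mathbb{P}^x$-a.s. for q.e. $x$. I fix $\alpha > 0$ and use the orthogonal decomposition via hitting distribution: for a finely open set $G$, the part form $(\mathcal{E}, \mathcal{F}_G)$ is the quasi-regular form of the process $X^G$ obtained by killing $X$ upon leaving $G$, and for a closed $F \subset G$ and $v \in \mathcal{F}_{b,F}$ one has $\mathcal{E}(u,v) = \mathcal{E}^{G}(u,v)$, since $\mathcal{E}_\alpha(R_\alpha^{G^c} u, v) = 0$ while $v$ vanishes q.e. off $F$. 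Thus testing $u$ against functions in $\mathcal{F}_{b,F}$ is the same as testing the part-form restriction of $u$ on $G$, which is the bridge to the killed process.

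For ii) $\Rightarrow$ i) I would, for each $n$, pass to the killed process $X^n := X^{G_n}$ with $G_n$ the fine interior of $F_n$. By the identification above, hypothesis ii) states exactly that $u|_{G_n}$ satisfies assertion i) of Theorem \ref{thm 5.2} for the part form $(\mathcal{E}^{G_n}, \mathcal{F}_{G_n})$. Theorem \ref{thm 5.2} then yields that $\widetilde{u}(X^n)$ is an $\alpha$-quasimartingale, hence a semimartingale by Rao's Theorem \ref{thm 2.2}; multiplying by the continuous finite-variation factor $e^{\alpha t}$ shows that $\widetilde{u}(X)$ is a semimartingale on the stochastic interval $[0, \sigma_n)$, because $X$ and $X^n$ agree up to $\sigma_n$. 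Since $(F_n)$ is a nest, $\sigma_n \nearrow \infty$, and patching the local canonical decompositions over the intervals $[0, \sigma_n)$ gives a global one, so $\widetilde{u}(X)$ is a semimartingale on $[0, \infty)$.

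For i) $\Rightarrow$ ii) I would start from the semimartingale $\widetilde{u}(X)$ and localize it by stopping times $\sigma_n \nearrow \infty$ taken as exit times from an increasing family of finely open sets $G_n$ on each of which $u$ is bounded; this bounds the jumps of the stopped process, so each stopped process is a quasimartingale, and the closures $F_n := \overline{G_n}$ form an $\mathcal{E}$-nest by the exit-time criterion. On each $G_n$ the killed process $X^n$ then has $\widetilde{u}(X^n)$ a quasimartingale, so the Fukushima decomposition for the part form has finite-variation part $N^n$ of bounded variation; integrating against the sub-invariant measure $\mu$ and using that the quasimartingale lives on all of $[0,\infty)$ gives a finite total variation, i.e. assertion iii) of Theorem \ref{thm 5.2} for the part form produces a finite smooth signed measure $\nu_n$ with $\mathcal{E}^{G_n}(u,v) = \nu_n(\widetilde{v})$, whence $|\mathcal{E}^{G_n}(u,v)| \leq c_n \|v\|_{\infty}$ for $v \in \mathcal{F}_{b,F_n}$. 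The identity $\mathcal{E}^{G_n}(u,v) = \mathcal{E}(u,v)$ on $\mathcal{F}_{b,F_n}$ from the first paragraph then yields ii).

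I expect the main obstacle to lie in the direction i) $\Rightarrow$ ii): one must arrange the semimartingale-localizing stopping times to be exit times of sets forming a genuine $\mathcal{E}$-nest, not merely stopping times increasing to infinity, and one must pass from the pathwise quasimartingale property of each $\widetilde u(X^n)$ to the analytic statement that the associated smooth measure $\nu_n$ has finite total variation. This last step is precisely where the convention in Definition \ref{defi 2.1} of working on the whole half-line, together with the sub-invariance of $\mu$, is used, and it is the heart of the equivalence in Theorem \ref{thm 5.2} for the part form. Since both this identification and Theorem \ref{thm 5.2} are insensitive to symmetry, the same argument is designed to carry over to semi-Dirichlet forms without extra hypotheses.
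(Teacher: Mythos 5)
Your ii) $\Rightarrow$ i) direction is essentially workable and close in spirit to the paper's argument (the paper localizes on the global form by applying Theorem \ref{thm 5.1} to $\widetilde{u}-R_1^{F_n^c}\widetilde{u}$, whereas you pass to the part form on the fine interior $G_n$, which is the route the paper takes later for semi-Dirichlet forms). Two points need repair, though. First, $u|_{G_n}$ is \emph{not} an element of $\mathcal{F}_{G_n}$, so Theorem \ref{thm 5.2} cannot be applied to "$u|_{G_n}$"; it applies to $u_n:=u-R_\alpha^{G_n^c}u\in\mathcal{F}_{G_n}$, and the identification $\mathcal{E}(u,v)=\mathcal{E}(u_n,v)$ for $v\in\mathcal{F}_{b,G_n}$ is only true up to the correction $\alpha(R_\alpha^{G_n^c}u,v)_2$ coming from $\mathcal{E}_\alpha(R_\alpha^{G_n^c}u,v)=0$; this is harmless once one assumes $\mu(F_n)<\infty$, but it must be accounted for. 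Second, after you conclude that $u_n(X)1_{[0,\sigma_n)}$ is a semimartingale you still have to add back $R_\alpha^{G_n^c}\widetilde{u}(X)1_{[0,\sigma_n)}$ to recover $\widetilde{u}(X)1_{[0,\sigma_n)}$; the paper disposes of this term by observing that, after multiplication by $e^{-\alpha t}$, it is a difference of right-continuous supermartingales. Your write-up silently conflates $\widetilde{u}(X^n)$ with $\widetilde{u_n}(X^n)$ and so skips this step.

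The genuine gap is in i) $\Rightarrow$ ii). You propose to deduce "assertion iii) of Theorem \ref{thm 5.2} for the part form" by "integrating against $\mu$" the pathwise quasimartingale property of the localized process. But assertion ii) of Theorem \ref{thm 5.2} requires the quantitative rate $\mathbb{E}^{\mu}[|N|_t]\leq\mathrm{const}\cdot t$, and this does \emph{not} follow from knowing that $\widetilde{u}(X^{G_n})$ is a $\mathbb{P}^x$-quasimartingale for q.e.\ $x$: the pathwise property only yields a decomposition $u=u_1-u_2$ with $u_i$ excessive and hence $\mathbb{E}^{\mu}[|N|_t]\lesssim\mu(u_1-P_tu_1)+\mu(u_2-P_tu_2)$, which is $o(1)$ but in general not $O(t)$; the $O(t)$ rate is precisely the extra analytic content that distinguishes conditions (i)--(iii) of Theorems \ref{thm 5.1} and \ref{thm 5.2} from the mere quasimartingale property. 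Consequently your $\nu_n$ need not be a finite measure and the bound $|\mathcal{E}(u,v)|\leq c_n\|v\|_\infty$ does not follow by this route. The paper avoids the issue entirely: from the semimartingale property it deduces that the zero-energy part $N$ of the \emph{global} Fukushima decomposition is of bounded variation (its local-martingale part has zero quadratic variation), invokes the Revuz correspondence to attach to $N$ a signed \emph{smooth} measure $\nu$ --- which by definition comes with its own nest $(F_n)$ of compacts on which $|\nu|(F_n)<\infty$ --- and then identifies $\mathcal{E}(u,v)=\nu(\widetilde{v})$, so that $c_n=|\nu|(F_n)$. You would need to replace your "integrate against $\mu$" step by this Revuz-measure argument (or otherwise produce the nest from the smoothness of $\nu$ rather than from boundedness of $u$ on the $G_n$'s).
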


\begin{proof} i) $\Rightarrow$ ii). As in the poof of ii) $\Rightarrow$ iii) in Theorem \ref{thm 5.1}, 
if $\widetilde{u}(X)$ is a semimartingale then $N$ (the CAF in Fukushima decomposition) 
is a continuous semimartingale of zero energy, hence it is of bounded variation. 
By \cite{MaRo92}, Theorem 2.4, $N$ is in Revuz correspondence with a signed smooth measure $\nu$, 
with an attached nest of compacts $(F_n)_{n \geq 1}$ s.t. $\nu(F_n) < \infty$.
Then just as in the proof of Theorem 5.4.2. in \cite{FuOsTa11}, 
one obtains that $\mathcal{E}(u,v) = \nu(\widetilde{v})$ for all $v \in \mathcal{F}$.

\vspace{0.2cm}
ii) $\Rightarrow$ i).
Without loss we can assume that $\mu(F_n) < \infty$.
Also, since $(F_n)_n$ is a nest we have that $\mathop{\lim}\limits_{n} T_{F_n^c} \geq \xi$ a.s. 
Due to a result of Meyer (see e.g. \cite{Pr05}, Theorem 6) it is sufficient to show that 
$(\widetilde{u}(X_t)1_{[0, T_{F_n^c})}(t))_{t \geq 0}$ is a semimartingale for each $n$ (such an argument was also employed in \cite{CiJaPrSh80}, after Theorem 4.6 ). 
On the other hand, $(e^{-t}R_1^{F_n^c}\widetilde{u}(X_t)1_{[0, T_{F_n^c})}(t))_{t \geq 0}$ 
is a difference of two right continuous supermartingales, 
so we only have to check that $(\widetilde{u} - R_1^{F_n^c}\widetilde{u})(X)$ is a semimartingale. 
But, if $v \in \mathcal{F}_b$,
$$
|\mathcal{E}_1(\widetilde{u} - R_1^{F_n^c}\widetilde{u}, v)| = 
|\mathcal{E}_1(\widetilde{u} - R_1^{F_n^c}\widetilde{u}, v - \widehat{R}_1^{ F_n^c}v)| 
= |\mathcal{E}_1(\widetilde{u}, v - \widehat{R}_1^{ F_n^c}v)| 
$$

\qquad \qquad \qquad \qquad \quad \quad \;\;\; $\leq (c_n + \int_{F_n}|u| d\mu) \|v - \widehat{R}_1^{F_n^c}v\|_{\infty}$

\vspace{0.2cm}
\qquad \qquad \qquad \qquad \quad \quad \;\;\; $\leq 2(c_n + \int_{F_n}|u| d\mu) \|v\|_{\infty}$,
 
\vspace{0.2cm}
\noindent
and by Theorem \ref{thm 5.1} it follows that $(\widetilde{u} - R_1^{F_n^c}\widetilde{u})(X)$ is a semimartingale.
\end{proof}

Recall that $(\mathcal{E}, \mathcal{F})$ is said to be {\it local} if for all pairs of elements $u,v \in \mathcal{F}$ with disjoint compact supports, it holds that $\mathcal{E}(u,v)=0$.
By \cite{MaRo92}, Chapter V, Theorem 1.5, $(\mathcal{E}, \mathcal{F})$ is local if and only if the associated process is a diffusion.

When $\mathcal{E}$ is local, Theorem \ref{thm 5.3} remains true if $u$ is assumed to be only locally in $\mathcal{F}$. 
Actually, the following even more general statement holds.

\begin{coro} \label{coro 5.4}
Assume that $(\mathcal{E}, \mathcal{F})$ is local.
Let $u$ be a real-valued $\mathcal{B}$-measurable finely continuous function and let $(v_k)_k \subset \mathcal{F}$ such that $v_k \mathop{\longrightarrow}\limits_{k \to\infty} u$ pointwise except a $\mu$-polar set and boundedly on each $F_n$. 
Further, suppose that there exist constants $c_n$ such that
$$
|\mathcal{E}(v_k, v)| \leq c_n \|v\|_{\infty} \;\; for \; all \; v \in \mathcal{F}_{b, F_n}.
$$
Then $u(X)$ is a semimartingale.
\end{coro}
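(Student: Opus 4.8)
The plan is to localize along the nest $(F_n)_{n\ge 1}$ exactly as in the proof of Theorem \ref{thm 5.3}, and for each fixed $n$ to manufacture an element $g_n\in\mathcal{F}_{b,F_n}$ that plays the role of $\widetilde u-R_1^{F_n^c}\widetilde u$ but is obtained as a \emph{weak limit} of the approximants $(v_k)_k$, since $u$ itself need not belong to $\mathcal{F}$. As usual I would assume $\mu(F_n)<\infty$; since $(F_n)_n$ is a nest, $\lim_n T_{F_n^c}\ge\xi$ a.s., so by the theorem of Meyer invoked for Theorem \ref{thm 5.3} it suffices to show that $(u(X_t)\mathbf 1_{[0,T_{F_n^c})}(t))_{t\ge 0}$ is a semimartingale for each $n$.

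Fix $n$ and set $g_k:=v_k-R_1^{F_n^c}v_k\in\mathcal{F}_{F_n}$. Because $(\mathcal{E},\mathcal{F})$ is local, the associated process is a diffusion; hence its trajectories are continuous and the exit point $X_{T_{F_n^c}}$ lies q.e.\ in $\partial F_n\subset F_n$. Together with the boundedness of $(v_k)_k$ on $F_n$ this yields $\sup_k\|g_k\|_\infty<\infty$ and $b_n:=\sup_k\int_{F_n}|v_k|\,d\mu<\infty$. Repeating verbatim the computation in Theorem \ref{thm 5.3}, for every $v\in\mathcal{F}_b$ one has $\mathcal{E}_1(g_k,v)=\mathcal{E}_1(v_k,v-\widehat R_1^{F_n^c}v)$, whence
$$
|\mathcal{E}_1(g_k,v)|\le\bigl(c_n+b_n\bigr)\,\|v-\widehat R_1^{F_n^c}v\|_\infty\le 2\bigl(c_n+b_n\bigr)\,\|v\|_\infty,
$$
uniformly in $k$. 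The key observation is that $g_k$ is itself an admissible test function: taking $v=g_k$ gives $\mathcal{E}_1(g_k,g_k)\le 2(c_n+b_n)\|g_k\|_\infty$, so by the sector condition $(g_k)_k$ is bounded in the Hilbert space $(\mathcal{F},\mathcal{E}_1)$. Extracting a weakly convergent subsequence and identifying its weak $L^2$-limit with the dominated pointwise limit $u-R_1^{F_n^c}u$, I obtain $g_n:=u-R_1^{F_n^c}u\in\mathcal{F}_{b,F_n}$; passing to the limit in the displayed bound gives $|\mathcal{E}_1(g_n,v)|\le 2(c_n+b_n)\|v\|_\infty$, hence $|\mathcal{E}(g_n,v)|\le\mathrm{const}\cdot(\|v\|_\infty+\|v\|_2)$ for all $v\in\mathcal{F}_b$.

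By Theorem \ref{thm 5.1}, $g_n(X)$ is then an $\alpha$-quasimartingale, in particular a semimartingale. On the other hand, the decomposition $R_1^{F_n^c}u=R_1^{F_n^c}u^+-R_1^{F_n^c}u^-$ exhibits $(e^{-t}R_1^{F_n^c}u(X_t)\mathbf 1_{[0,T_{F_n^c})}(t))_t$ as a difference of two right-continuous supermartingales (finite again because continuity of paths forces the exit values to stay in $F_n$, where $u$ is bounded), hence a semimartingale. Since $u=g_n+R_1^{F_n^c}u$ q.e.\ and $X_t\in F_n$ for $t<T_{F_n^c}$, the process $(u(X_t)\mathbf 1_{[0,T_{F_n^c})}(t))_t$ is a semimartingale for every $n$, and the localization argument above finishes the proof.

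The main obstacle is the second paragraph: in contrast with Theorem \ref{thm 5.3}, where $u\in\mathcal{F}$ makes $\widetilde u-R_1^{F_n^c}\widetilde u$ directly available, here the limiting element $g_n\in\mathcal{F}$ must be produced from the approximants. The self-testing device $v=g_k$, which upgrades a one-sided sup-norm estimate into a genuine $\mathcal{E}_1$-energy bound, is exactly what drives the weak-compactness argument; and it is the locality hypothesis (equivalently, continuity of the trajectories) that secures both the finiteness of the reduced functions and the uniform bounds on $F_n$.
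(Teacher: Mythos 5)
Your argument is correct, and its localization skeleton (reduction to $u-R_1^{F_n^c}u$ on each $F_n$, the treatment of $R_1^{F_n^c}u$ as a difference of supermartingales, and the appeal to Meyer's theorem) coincides with the paper's. Where you genuinely diverge is in how the limit $k\to\infty$ is handled. The paper never shows that $u_n:=u-R_1^{F_n^c}u$ belongs to $\mathcal{F}$: it passes to the limit in the integrated inequality $\mu(\widetilde v_k^{\,n}(\widehat P_t^{1}f-f))\le \mathrm{const}\cdot t(\|f\|_\infty+\|f\|_2)$ by dominated convergence (using precisely the bounded pointwise convergence $\widetilde v_k^{\,n}\to u_n$ that locality guarantees) and then invokes the purely $L^p$ criterion of Proposition \ref{prop 4.4} for $u_n$ itself. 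You instead upgrade the uniform bound $|\mathcal{E}_1(g_k,v)|\le 2(c_n+b_n)\|v\|_\infty$ into an energy bound by testing with $v=g_k$ (legitimate since $g_k\in\mathcal{F}_{b,F_n}$, which in turn rests on the locality-based fact that $X_{T_{F_n^c}}\in F_n$ q.e.), extract a weak limit in the Hilbert space $(\mathcal{F},\widetilde{\mathcal{E}}_1)$, identify it with $u_n$ through the strong $L^2$-convergence $g_k\to u_n$, and only then apply Theorem \ref{thm 5.1}. Your route is somewhat heavier — it needs the sector condition, weak compactness, and the identification of limits — but it buys the extra conclusion that $u-R_1^{F_n^c}u\in\mathcal{F}_{b,F_n}$, i.e.\ that $u$ is locally in $\mathcal{F}$, information the paper's dominated-convergence shortcut does not produce. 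Both arguments use the locality hypothesis at the same two places: to keep the exit values in $F_n$ (so that $R_1^{F_n^c}v_k$ is uniformly bounded and convergent) and to make the localized pieces fit together.
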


\begin{proof} As we already mentioned in the proof of Theorem \ref{thm 5.3}, ii) $\Rightarrow$ i), 
it is sufficient to show that $(u - R_1^{F_n^c}u)(X)$ is a semimartingale.
Also, we saw that $|\mathcal{E}_1(\widetilde{v}_k - R_1^{F_n^c}\widetilde{v}_k, v)| \leq const \cdot \|v\|_{\infty}$ for all $v \in\mathcal{F}_b$, where the constant in the right-hand side may depend on $n$.
Now, by Theorem \ref{thm 5.1} we have that by setting $\widetilde{v}_k^n := \widetilde{v}_k - R_1^{F_n^c}\widetilde{v}_k$,
$$
\mu(\widetilde{v}_k^n(\widehat{P}_t^{1}f - f)) \leq {const} \cdot t (\|f\|_{\infty} + \|f\|_2).
$$
But by hypothesis, $R_1^{F_n^c}\widetilde{v}_k(x) = \mathbb{E}^x[e^{-T_{F_n^c}}\widetilde{v}_k(X_{T_{F_n^c}})]$ converges $\mu$-a.s. and boundedly to $R_1^{F_n^c}u$ as $k$ tends to infinity. 
So by setting $u_n := u - R_1^{F_n^c}u$ and by dominated convergence
$$
\mu((P_t^1u_n - u_n)f) = \mu(u_n(\widehat{P}_t^{ 1}f - f)) = \lim\limits_k \mu(\widetilde{v}_k^n(\widehat{P}_t^{1}f - f)) 
$$

\qquad \qquad \qquad \qquad \qquad\quad\quad\;\; $\leq {const} \cdot t (\|f\|_{\infty} + \|f\|_2) $

\vspace{0.2cm}
\noindent for all $f \in L^1(\mu)$.
Therefore, by Proposition \ref{prop 4.4} we get that $u_n(X)$ is a semimartingale.
\end{proof}

\vspace{0.2cm}

\subsection{Extensions to semi-Dirichlet forms} 
We reiterate that for the previous results of this section, where we considered only Dirichlet forms, 
it was used the fact that the adjoint semigroup $(\widehat{P}_t)_{t\geq 0}$ was sub-Markovian; 
e.g. in order to have the estimate $\|\int_0^t \widehat{P}_s f\; ds\|_\infty \leq t \|f\|_\infty$.
In this subsection we show that, as a matter of fact, the sub-Markov property 
of the adjoint semigroup is not crucial and most of the previous results remain valid for semi-Dirichlet forms.
More precisely, although in order to extend theorems \ref{thm 5.1} and \ref{thm 5.2}, i) $\Rightarrow$ ii) 
to semi-Dirichlet forms, essentially with the same proofs, it is sufficient to assume the existence of a strictly positive bounded co-excessive function, 
Theorem \ref{thm 5.3}, $ii) \Rightarrow i)$ remains true without any further assumptions, due to a standard localization procedure.
Finally, before we present the announced extensions, 
we emphasize once again that the case of lower bounded semi-Dirichlet forms follows easily, 
by working with $\mathcal{E}_\alpha$ instead of $\mathcal{E}$, for instance.
 
Hereinafter, we keep the same context and notations as before, 
but we assume that $(\mathcal{E}, \mathcal{F})$ is merely a (quasi-regular) semi-Dirichlet form on $L^2(E, \mu)$, 
i.e. we drop condition iv) from the beginning of this section.

Before we present the announced extension, in order to fix the notations, let us recall the following localization procedure: 
Let $G$ be a finely open set and consider the bilinear form 
$$
\mathcal{E}^G(u,v) := \mathcal{E}(u,v) \;\; \mbox{for all} \; u,v \in D(\mathcal{E}^G):=\mathcal{F}_G \; .
$$
Then by \cite{BeBo04}, Theorem 7.6.11 (see also \cite{Os13}, Theorem 3.5.7), 
$(\mathcal{E}^G, \mathcal{F}_G)$ is a (quasi-regular) semi-Dirichlet form 
whose associated process is $X^G$ with state space $G \cup \{ \Delta \}$, obtained by killing $X$ upon leaving $G$:
$$
X^G_t := \left \{
	         \begin{array}{ll}
		         X_t  & \mbox{if } 0\leq t < T_{G^c} \\
		         \Delta & \mbox{if } t \geq T_{G^c}
	         \end{array}
         \right. 
$$
The associated semigroup and resolvent are denoted by $(P_t^G)_{t \geq 0}$ and $(U_\alpha^G)_{\alpha > 0}$.

\begin{thm} \label{thm 5.5}
Let $u \in \mathcal{F}$ and assume there exist a nest $(F_n)_{n\geq1}$ and constants $(c_n)_{n\geq 1}$ such that
$$
\mathcal{E}(u,v) \leq c_n \|v\|_\infty \;\; \mbox{for all} \; v\in \mathcal{F}_{b, F_n}.
$$
Then $\widetilde{u}(X)$ is a semimartingale.
\end{thm}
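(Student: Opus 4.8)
The plan is to reproduce the architecture of the proof of Theorem~\ref{thm 5.3}, ii)$\Rightarrow$i), but to bypass the single step that used the sub-Markovianity of the dual semigroup $(\widehat P_t)_{t\ge 0}$ — namely the appeal to Theorem~\ref{thm 5.1} — by performing that step only after killing the process. Since $\mathcal{F}_{b,F_n}$ is stable under $v\mapsto -v$, the hypothesis is equivalent to $|\mathcal{E}(u,v)|\le c_n\|v\|_\infty$ for $v\in\mathcal{F}_{b,F_n}$, and without loss I would assume $\mu(F_n)<\infty$. Because $(F_n)_n$ is a nest, $\lim_n T_{F_n^c}\ge\xi$ a.s., so by Meyer's patching theorem (\cite{Pr05}, Theorem 6, as already used after Theorem~4.6 in \cite{CiJaPrSh80}) it suffices to prove that $\widetilde u(X)$ is a semimartingale on each stochastic interval $[0,T_{F_n^c})$. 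Writing $\widetilde u=w_n+R_1^{F_n^c}\widetilde u$ with $w_n:=\widetilde u-R_1^{F_n^c}\widetilde u\in\mathcal{F}_{F_n}$, and observing that $e^{-t}R_1^{F_n^c}\widetilde u(X_t)=e^{-t}R_1^{F_n^c}\widetilde u^{+}(X_t)-e^{-t}R_1^{F_n^c}\widetilde u^{-}(X_t)$ is a difference of right-continuous supermartingales (each $R_1^{F_n^c}\widetilde u^{\pm}$ being $1$-excessive), the problem reduces to showing that $w_n(X)$ is a semimartingale up to $T_{F_n^c}$.

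Next I would localize. Let $G_n$ be the fine interior of $F_n$ and consider the killed semi-Dirichlet form $(\mathcal{E}^{G_n},\mathcal{F}_{G_n})$ together with the killed process $X^{G_n}$, exactly as recalled just before the statement; since $X^{G_n}$ coincides with $X$ on $[0,T_{F_n^c})$, it is enough to prove that $w_n(X^{G_n})$ is a semimartingale. The point of localizing is that for test functions supported in $G_n$ the problematic dual projection disappears: $w_n\in\mathcal{F}_{G_n}$ (it vanishes q.e.\ off $F_n$, the fine boundary of $F_n$ being $\mu$-negligible), and for every $v\in\mathcal{F}_{b,G_n}\subset\mathcal{F}_{F_n}$ the orthogonal decomposition gives $\mathcal{E}_1(R_1^{F_n^c}\widetilde u,v)=0$, so that $\mathcal{E}_1(w_n,v)=\mathcal{E}(\widetilde u,v)+\int_{G_n}\widetilde u\,v\,d\mu$. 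Consequently
\[ |\mathcal{E}^{G_n}(w_n,v)|\le\Big(c_n+\textstyle\int_{G_n}|u|\,d\mu+\|w_n\|_2\Big)\big(\|v\|_\infty+\|v\|_2\big),\qquad v\in\mathcal{F}_{b,G_n}, \]
which is precisely condition i) of Theorem~\ref{thm 5.1} for the form $\mathcal{E}^{G_n}$, and is obtained with no recourse whatsoever to the dual hitting operator $\widehat R$.

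It remains to supply the ingredient that makes the semi-Dirichlet version of Theorem~\ref{thm 5.1} applicable. Because $X^{G_n}$ is killed upon leaving the finite-measure set $G_n$, it is transient, and hence its dual carries a strictly positive bounded co-excessive function $\phi_n$ (the standard co-potential construction for transient resolvents). This is exactly the hypothesis under which, as announced at the beginning of the subsection, the implication i)$\Rightarrow$ii) of Theorem~\ref{thm 5.1} goes through for a semi-Dirichlet form with essentially the same proof, the $L^\infty$-bound on $v=\int_0^t\widehat P_s^{G_n}w\,ds$ being recovered from $\phi_n$. Applying that version to $\mathcal{E}^{G_n}$ and $w_n$ yields that $w_n(X^{G_n})$ is an $\alpha$-quasimartingale, in particular a semimartingale; transferring this back through $X^{G_n}=X$ on $[0,T_{F_n^c})$ and letting $n\to\infty$ finishes the proof via Meyer's theorem.

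The main obstacle, and the reason the global argument of Theorem~\ref{thm 5.3} cannot simply be repeated, is the absence of an $L^\infty$-contraction for the dual: for a genuine semi-Dirichlet form $(\widehat P_t)_{t\ge0}$ need not be sub-Markovian, so neither $\|\widehat R_1^{F_n^c}v\|_\infty\le\|v\|_\infty$ nor the $L^\infty$-estimate at the heart of Theorem~\ref{thm 5.1} is available globally. Localization resolves this by trading the global dual for the transient killed dual, which does possess a strictly positive bounded co-excessive function, while at the same time restricting the test functions to $\mathcal{F}_{G_n}$, where the orthogonality removes the term that forced the use of $\widehat R$. The delicate points to verify — all standard for the fine interior of a set in a nest — are the boundedness and the $\mu$-a.e.\ strict positivity of $\phi_n$, the $\mu$-negligibility of the fine boundary $F_n\setminus G_n$ (needed for $w_n\in\mathcal{F}_{G_n}$), and the identification $T_{G_n^c}=T_{F_n^c}$ $\mathbb{P}^x$-a.s.\ for q.e.\ $x\in G_n$.
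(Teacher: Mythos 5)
Your architecture coincides with the paper's: reduce via Meyer's patching theorem to the stochastic intervals $[0,T_{F_n^c})$, split off the co-harmonic part $R_1^{F_n^c}\widetilde u$ (a difference of $1$-excessive functions composed with $X$), localize to the fine interior $G_n$ and the killed form $(\mathcal{E}^{G_n},\mathcal{F}_{G_n})$, use the orthogonal decomposition to turn the hypothesis into a bound $|\mathcal{E}^{G_n}_{\alpha}(u_n,v)|\le c\,\|v\|_\infty$ on $\mathcal{F}_{b,G_n}$, and then run the Theorem \ref{thm 5.1}/Proposition \ref{prop 4.4} machinery against a bounded co-excessive weight. However, the one step you delegate to a ``standard'' construction is exactly where the proof is delicate, and your justification does not work. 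First, $\mu(G_n)<\infty$ does not imply that $X^{G_n}$ is transient (take $X$ recurrent on a space of finite total measure and $F_n=E$; nothing forces the killed process to die). Second, and more importantly, even when the killed dual resolvent is transient, the standard co-potential construction produces a strictly positive co-excessive function $\widehat U^{G_n}g$ that is finite a.e.\ but has no reason to be \emph{bounded}; the usual remedy of truncating, $\phi_n\wedge 1$, requires the constant $1$ to be co-supermedian, i.e.\ $\widehat P_t 1\le 1$ --- which is precisely the sub-Markov property of the dual that fails for a genuine semi-Dirichlet form and that the whole subsection is designed to avoid. So the existence of a strictly positive \emph{bounded} co-excessive function for the killed dual is not available by the route you indicate, and without it your appeal to the semi-Dirichlet version of Theorem \ref{thm 5.1} has no foundation.

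The paper fills this gap differently: it fixes a quasi-continuous $0<f_0\in\mathcal{F}$, uses $\alpha_k\widehat U_{\alpha_k}f_0\to f_0$ in $\mathcal{E}_1^{1/2}$ and hence uniformly on a refined nest, and after intersecting nests obtains weights $v_k^n:=\alpha_k\widehat U^{G_n}_{\alpha_k}(f_0|_{G_n})$ that are co-$\alpha_k$-excessive for the killed form and \emph{uniformly bounded on $F_n$}, while strict positivity is recovered only in the limit $v_k^n\to f_0|_{G_n}>0$ (the semimartingale property is first obtained $v_k^n\cdot\mu$-a.e.\ for each $k$, then q.e.\ by taking the union over $k$). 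A secondary point: you define $w_n=\widetilde u-R_1^{F_n^c}\widetilde u$ and need $w_n\in\mathcal{F}_{G_n}$, which requires $\mu(F_n\setminus G_n)=0$; this is not automatic, and the paper sidesteps it by taking the hitting distribution on $G_n^c$ rather than $F_n^c$ from the outset (noting at the end that $R_1^{F_n^c}f=R_1^{G_n^c}f$ for $1$-excessive $f$, so the exhaustion property $\lim_n T_{G_n^c}\ge\xi$ is preserved).
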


\begin{proof}
Let us fix a quasi-continuous element $0<f_0 \in \mathcal{F}$ 
and a sequence of positive constants $\alpha_k \nearrow_k \infty$. 
By \cite{MaRo92}, Theorem 2.13 we have that 
$\alpha_k \widehat{U}_{\alpha_k} f_0 \mathop{\longrightarrow}\limits_{k}^{\mathcal{E}_1^{1/2}} f_0$, 
hence by \cite{MaOvRo95}, Proposition 2.18, (i), there exists a nest $(F'_n)_{n\geq1}$ s.t. 
(by passing to a subsequence if necessary) $\lim\limits_{k }\alpha_k \widehat{U}_{\alpha_k} f_0 = f_0$ uniformly on each $F'_n$.
Consequently, replacing $F_n$ with $F_n \cap F'_n$, 
we may assume that $(F_n)_{n\geq 1}$ is a nest such that 
$\sup\limits_k \|1_{F_n} \alpha_k \widehat{U}_{\alpha_k} f_0\|_\infty < \infty$.
Also, without loss of generality we suppose that $\mu (F_n) < \infty$ for all $n \geq 1$. 

Now, let us consider the fine interiors $G_n:= \accentset{\circ}{F}_n^f$ 
and the localized semi-Dirichlet forms $(\mathcal{E}^{G_n}, \mathcal{F}_{G_n})$.
As before, the idea is to localize $u$ by setting 
$$
u_n:= \widetilde{u} - R_1^{G_n^c}\widetilde{u} \in \mathcal{F}_{G_n} \;\; \mbox{for all} \; n \geq 1,
$$
so that by setting $c_k^n := c_n + \alpha_k \|u_n\|_{L^1(G_n, \mu)}+\|u\|_{L^1(G_n, \mu)}$, 
for all $v \in \mathcal{F}_{G_n}$
$$
|\mathcal{E}_{\alpha_k+1}^{G_n}(u_n,v)| = |\mathcal{E}_{\alpha_k+1}(u_n,v)| = 
|\mathcal{E}_{1}(u,v) + \alpha_k (u_n, v)_2| \leq c_k^n \|v\|_\infty.
$$

On the one hand, we claim that $(u_n(X_t)1_{[0, T_{G_n^c})}(t))_{t\geq 0}$ 
is a $\mathbb{P}^x$-semimartingale q.e. $x \in G_n$.
To see this, let us introduce for all $\alpha > 0$ and $n \geq 1$
$$
v_k^n := \alpha_k \widehat{U}_{\alpha_k}^{G_n}(f_0|_{G_n}) \in \mathcal{F}_{G_n}
$$
and note that $v_k^n$ is $\mathcal{E}^{G_n}_{\alpha_k}$-co-excessive 
(i.e. $\widehat{P}_s^{G_n,\alpha_k} v_k^n \leq v_k^n$) 
and $v_k^n \mathop{\longrightarrow}\limits_k^{\mathcal{E}^{G_n}_{1}} f_0|_{G_n} > 0$. 
Furthermore, by the way we chose the nest $(F_n)_{n\geq 1}$
$$
d_n:= \sup\limits_k\|v_k^n\|_\infty \leq \sup\limits_k\|1_{F_n}\alpha_k \widehat{U}_{\alpha_k} f_0 \|_\infty < \infty.
$$ 
It follows that for all $r,t > 0$,

\vspace{0.2cm}
\noindent
$\int_{G_n}P_r^{G_n,\alpha_k+1}|P_t^{G_n,\alpha_k+1} u_n - u_n| v_k^n \; d \mu = |\mathcal{E}_{\alpha_k+1}^{G_n}(u_n,\int_0^t \widehat{P}_s^{G_n,\alpha_k+1}(sgn(P_t^{G_n,\alpha_k+1}u_n -u_n)\widehat{P}_r^{G_n,\alpha_k+1} v_k^n \; ds))|$

\vspace{0.2cm}
\qquad \qquad \qquad\qquad\qquad\quad \;\;\;\;\;\; 
$\leq c_k^n \|\int_0^t \widehat{P}_s^{G_n,{\alpha_k+1}}(sgn(P_t^{G_n,{\alpha_k+1}}u_n -u_n)\widehat{P}_r^{G_n,{\alpha_k+1}} v_k^n \; ds)\|_\infty$

\vspace{0.2cm}
\qquad \qquad \qquad\qquad\qquad\quad \;\;\;\;\;\; 
$\leq c_k^n \int_0^t \|\widehat{P}_{s+r}^{G_n,{\alpha_k+1}}v_k^n\|_\infty \; ds \leq c_k^n \int_0^t e^{-(r+s)}\|v_k^n\|_\infty \; ds$

\vspace{0.2cm}
\qquad \qquad \qquad\qquad\qquad\quad \;\;\;\;\;\; 
$\leq c_k^n d_n e^{- r} t$.

\vspace{0.2cm}
Let now $\tau_l := \{ \frac{i}{2^l} : 0 \leq i \leq l 2^l \}$, $l \geq 1$.
As in the proof of Proposition \ref{prop 4.4}, i) $\Rightarrow$ ii), we get

$$
\int_{G_n}\sup\limits_l V_{\tau_l}^{(P_t^{G_n, \alpha_k+1})}(u_n) \; v_k^n \; d\mu < \infty,
$$
hence, by e.g. Proposition \ref{prop 4.2}, the process $(e^{-(\alpha_k+1)t}u_n(X^{G_n}_t))_{t\geq 0}$, 
and more importantly, the process $u_n(X^{G_n})$, are $\mathbb{P}^x$-semimartingales for 
$v_k^n \cdot \mu$-a.e. $x \in G_n$ for all $k > 0$.
This means that $(u_n(X_t)1_{[0, T_{G_n^c})}(t))_{t\geq 0}$ is a $\mathbb{P}^x$-semimartingale q.e. $x\in G_n$.

On the other hand, as in the proof of Theorem \ref{thm 5.3}, ii) $\Rightarrow$ i), 
the process $(R_1^{G_n^c}\widetilde{u}(X_t)1_{[0, T_{G_n^c})})_{t\geq 0}$ is a semimartingale, hence $\widetilde{u}(X_t)1_{[0, T_{G_n^c})}(t) = u_n(X_t)1_{[0, T_{G_n^c})}(t) + R_1^{G_n^c}\widetilde{u}(X_t)1_{[0, T_{G_n^c})}$, $t\geq 0$ is a semimartingale.
By the result of Meyer already used in Theorem \ref{thm 5.3}, 
it is sufficient to show that $\mathop{\lim}\limits_{n} T_{G_n^c} \geq \xi$ a.s.
But this property is true for $(F_n)_{n \geq 1}$ and it is inherited by $(G_n)_{n\geq 1}$ 
because for any $f\in E(\mathcal{U}^1)$, $R_1^{F_n^c} f = R_1^{G_n^c}f$. 

\end{proof}

The case when $u$ is merely locally in $\mathcal{F}$ 
can be treated just like Corollary \ref{coro 5.4}, so we state this observation as a corollary, but we omit the proof.

\begin{coro} \label{coro 5.6}
The statement of Corollary \ref{coro 5.4} remains valid if $(\mathcal{E}, \mathcal{F})$ is a semi-Dirichlet form.
\end{coro}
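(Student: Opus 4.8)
The plan is to combine the localization device from the proof of Theorem \ref{thm 5.5} with the approximation-and-passage-to-the-limit argument used for Corollary \ref{coro 5.4}. The only place where Corollary \ref{coro 5.4} exploited the Dirichlet (rather than merely semi-Dirichlet) structure was through the appeal to Theorem \ref{thm 5.1}, which rests on the sub-Markov property of the adjoint semigroup $(\widehat{P}_t)_{t\geq 0}$. As in Theorem \ref{thm 5.5}, I would circumvent this by localizing on the fine interiors $G_n := \accentset{\circ}{F}_n^f$ and replacing the global estimate by one tested against a strictly positive bounded co-excessive function of the localized form.

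First I would fix the nest exactly as in Theorem \ref{thm 5.5}: fixing a quasi-continuous $0<f_0\in\mathcal{F}$ and $\alpha_k\nearrow\infty$, refine $(F_n)$ so that $\mu(F_n)<\infty$ and $\sup_k\|1_{F_n}\alpha_k\widehat{U}_{\alpha_k}f_0\|_\infty<\infty$, and introduce on each localized form $(\mathcal{E}^{G_n},\mathcal{F}_{G_n})$ the $\mathcal{E}^{G_n}_{\alpha_k}$-co-excessive functions $w_k^n:=\alpha_k\widehat{U}_{\alpha_k}^{G_n}(f_0|_{G_n})$, which satisfy $\sup_k\|w_k^n\|_\infty=:d_n<\infty$. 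Writing $(v_j)_j$ for the sequence approximating $u$ in the hypothesis, I localize each term by $v_j^n:=\widetilde{v}_j-R_1^{G_n^c}\widetilde{v}_j\in\mathcal{F}_{G_n}$ and set $u_n:=u-R_1^{G_n^c}u$. Using $\mathcal{E}_1(R_1^{G_n^c}\widetilde{v}_j,v)=0$ for $v\in\mathcal{F}_{G_n}$ together with the hypothesis $|\mathcal{E}(v_j,v)|\leq c_n\|v\|_\infty$ on $\mathcal{F}_{b,F_n}\supset\mathcal{F}_{G_n}\cap L^\infty(\mu)$, I obtain, just as in Theorem \ref{thm 5.5}, that $|\mathcal{E}_{\alpha_k+1}^{G_n}(v_j^n,v)|\leq c_{j,k}^n\|v\|_\infty$ for all $v\in\mathcal{F}_{G_n}$, with $c_{j,k}^n:=c_n+\|v_j\|_{L^1(G_n,\mu)}+\alpha_k\|v_j^n\|_{L^1(G_n,\mu)}$, and hence, invoking the co-excessivity of $w_k^n$ to replace the unavailable sub-Markov bound on $\widehat{P}$, the estimate
\begin{equation*}
\int_{G_n} P_r^{G_n,\alpha_k+1}\bigl|P_t^{G_n,\alpha_k+1}v_j^n-v_j^n\bigr|\,w_k^n\,d\mu\leq c_{j,k}^n\,d_n\,e^{-r}t,\qquad r,t\geq 0.
\end{equation*}

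Next I would let $j\to\infty$ for fixed $k$. The probabilistic representation $R_1^{G_n^c}\widetilde{v}_j(x)=\mathbb{E}^x[e^{-T_{G_n^c}}\widetilde{v}_j(X_{T_{G_n^c}})]$ together with the locality of $(\mathcal{E},\mathcal{F})$ — which forces $X_{T_{G_n^c}}$ to lie in $F_n\supset\overline{G_n}$ by path-continuity — and the hypothesis that $v_j\to u$ boundedly on $F_n$, yield by dominated convergence that $v_j^n\to u_n$ boundedly $\mu$-a.e. Since $\mu(G_n)<\infty$ the constants $c_{j,k}^n$ stay bounded in $j$, and a further dominated-convergence passage against the bounded density $w_k^n\in L^1(G_n,\mu)$ (equivalently against $\widehat{P}_r^{G_n,\alpha_k+1}w_k^n\leq e^{-r}w_k^n$) transfers the displayed estimate to $u_n$. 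Following the computation in the proof of Proposition \ref{prop 4.4}, i) $\Rightarrow$ ii), now with the single density $w_k^n$, this bounds $\int_{G_n}\sup_l V_{\tau_l}^{(P^{G_n,\alpha_k+1})}(u_n)\,w_k^n\,d\mu<\infty$, whence Proposition \ref{prop 4.2} shows that $u_n(X^{G_n})$ is a $\mathbb{P}^x$-semimartingale for $w_k^n\cdot\mu$-a.e.\ $x\in G_n$; letting $k$ vary so that the supports of the $w_k^n$ exhaust $G_n$ upgrades this to q.e.\ $x\in G_n$, i.e.\ $(u_n(X_t)1_{[0,T_{G_n^c})}(t))_t$ is a semimartingale.

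Finally, exactly as in Theorem \ref{thm 5.5}, $(R_1^{G_n^c}u(X_t)1_{[0,T_{G_n^c})}(t))_t$ is a difference of right-continuous supermartingales, so summing the two pieces gives that $(\widetilde{u}(X_t)1_{[0,T_{G_n^c})}(t))_t$ is a semimartingale for each $n$; since $R_1^{F_n^c}f=R_1^{G_n^c}f$ for $f\in E(\mathcal{U}^1)$, the property $\lim_n T_{G_n^c}\geq\xi$ a.s.\ is inherited from $(F_n)$, and the result of Meyer already used in Theorem \ref{thm 5.3} delivers that $u(X)$ is a semimartingale. The step I expect to be the main obstacle is the bookkeeping of the two independent limits — the co-excessive parameter $\alpha_k$ and the approximant index $j$ — so that $c_{j,k}^n$ remains controlled as $j\to\infty$; this is precisely where the locality hypothesis does its work, guaranteeing that the merely finely continuous $u$ is faithfully recovered as the bounded $\mu$-a.e.\ limit of the localized $v_j^n\in\mathcal{F}_{G_n}$, on which the localized semi-Dirichlet forms can be brought to bear.
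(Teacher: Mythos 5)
Your argument is correct and is exactly the proof the paper has in mind when it omits the details: you splice the approximation-and-limit passage of Corollary \ref{coro 5.4} into the localization scheme of Theorem \ref{thm 5.5}, replacing the sub-Markov bound on $(\widehat{P}_t)_{t\ge 0}$ by the uniformly bounded co-excessive functions $\alpha_k\widehat{U}_{\alpha_k}^{G_n}(f_0|_{G_n})$ on the fine interiors $G_n$, and you correctly identify locality as what keeps $X_{T_{G_n^c}}$ in $F_n$ so that the constants $c_{j,k}^n$ and the hitting distributions $R_1^{G_n^c}\widetilde{v}_j$ pass to the limit in $j$. No gaps.
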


\noindent
\subsection{Final remarks} 
For practical reasons, it is useful to know whether it is sufficient to check inequalities 
(\ref{ineq}) and (\ref{lineq}) only for $v \in \mathcal{F}_0$, where $\mathcal{F}_0$ 
is a certain proper subspace of $\mathcal{F}$.
We point out below some ideas of choosing $\mathcal{F}_0$.

a) Assume that $(\mathcal{E}, \mathcal{F})$ is a (non-symmetric) Dirichlet form 
and take $\mathcal{F}_0 := D({\sf \widehat{L}}) \cap L^\infty(\mu)$.
If inequality (\ref{ineq}) is verified for all $v \in \mathcal{F}_0$ 
then $\widetilde{u}(X)$ is an $\alpha$-quasimartingale for all $\alpha > 0$. 
This is true by Proposition \ref{prop 4.5}. 


b) Assume that $(\mathcal{E}, \mathcal{F})$ is a semi-Dirichlet form.
We consider the following extension of condition $(\mathcal{L})$ 
from \cite{Fu99}: a subspace $\mathcal{F}_0 \subset \mathcal{F}_b$ 
satisfies {\it condition} $(\mathcal{S})$ if $\mathcal{F}_0$ is $\mathcal{E}_1^{1/2}$-dense in $\mathcal{F}$ 
and there exists a bounded continuous function $\phi : \mathbb{R} \mapsto \mathbb{R}$ such that

 - $ \phi(\mathcal{F}_0) \subset \mathcal{F}_0$;

 - $\phi (t) = t$ if $t \in [-1,1]$; 

 - if $(v_n)_{n \geq 1} \subset \mathcal{F}_0$ is $\mathcal{E}_1^{1/2}$-convergent then $(\phi(v_n))_n$ is $\mathcal{E}_1^{1/2}$-bounded.  
 
As a candidate for a space satisfying condition ($\mathcal{S}$), 
one should have in mind a {\it core} in the sense of \cite{FuOsTa11} 
(for regular Dirichlet forms), or the space of cylindrical functions 
in the infinite dimensional situation (see e.g. \cite{MaRo92}), while $\phi$ could be a smooth unit contraction. 

In the same spirit as \cite{Fu99}, Lemma 6.1, we have the following result.

\begin{lem} 
Let $u \in \mathcal{F}$ and $\mathcal{F}_0$ satisfy condition ($\mathcal{S}$).
If inequality (\ref{ineq}) holds for all $v \in \mathcal{F}_0$ then it holds for all $v \in \mathcal{F}_b$.
\end{lem}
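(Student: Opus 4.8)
The plan is to reduce, by homogeneity, to the case $v \in \mathcal{F}_b$ with $\|v\|_\infty \le 1$, and then to approximate such a $v$ in $\mathcal{E}_1^{1/2}$ by elements of $\mathcal{F}_0$, on which the hypothesis $|\mathcal{E}(u,\cdot)| \le c\|\cdot\|_\infty$ is available, using the cut-off $\phi$ to keep sup-norms bounded along the approximation. The main obstacle I anticipate is that condition $(\mathcal{S})$ does \emph{not} provide continuity of the map $z \mapsto \phi(z)$ for the $\mathcal{E}_1^{1/2}$-norm; it only guarantees that this map sends $\mathcal{E}_1^{1/2}$-convergent sequences into $\mathcal{E}_1^{1/2}$-bounded ones. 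Hence I cannot simply pass $\phi(v_n) \to \phi(\bar v) = \bar v$ strongly and invoke continuity of the form. I would circumvent this by a weak-compactness argument in the Hilbert space $(\mathcal{F}, \mathcal{E}_1^{1/2})$, where boundedness is all that is needed to extract a useful limit.

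Concretely, first I would fix $v \in \mathcal{F}_b$, $v \ne 0$, and set $\bar v := v/\|v\|_\infty$, so that $|\bar v| \le 1$ $\mu$-a.e. and therefore $\phi(\bar v) = \bar v$ because $\phi(t) = t$ on $[-1,1]$. Using the $\mathcal{E}_1^{1/2}$-density of $\mathcal{F}_0$, I choose $v_n \in \mathcal{F}_0$ with $v_n \to \bar v$ in $\mathcal{E}_1^{1/2}$; then $v_n \to \bar v$ in $L^2(\mu)$ as well and, after passing to a subsequence, $\mu$-a.e. The three properties of $(\mathcal{S})$ then deliver: $\phi(v_n) \in \mathcal{F}_0$; $\|\phi(v_n)\|_\infty \le \|\phi\|_\infty =: C_\phi$ since $\phi$ is bounded; $\phi(v_n) \to \phi(\bar v) = \bar v$ $\mu$-a.e. by continuity of $\phi$; and, since $(v_n)_n$ is $\mathcal{E}_1^{1/2}$-convergent, the sequence $(\phi(v_n))_n$ is $\mathcal{E}_1^{1/2}$-bounded, in particular bounded in $\mathcal{F}$ and in $L^2(\mu)$.

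Finally I would identify the weak limit and pass to the limit in the form. As $(\mathcal{F}, \mathcal{E}_1^{1/2})$ is a Hilbert space (its norm is exactly that of the symmetric part of $\mathcal{E}_1$), the bounded sequence $(\phi(v_n))_n$ has a subsequence converging weakly in $\mathcal{F}$ to some $w$; along it $\phi(v_n) \rightharpoonup w$ weakly in $L^2(\mu)$ too, because the inclusion $\mathcal{F} \hookrightarrow L^2(\mu)$ is bounded. On the other hand, the $L^2(\mu)$-bounded sequence $(\phi(v_n))_n$ converges $\mu$-a.e. to $\bar v$, so it converges weakly in $L^2(\mu)$ to $\bar v$; uniqueness of the weak $L^2$-limit forces $w = \bar v$. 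The functional $z \mapsto \mathcal{E}(u,z)$ is bounded for the $\mathcal{E}_1^{1/2}$-norm, since $|\mathcal{E}(u,z)| = |\mathcal{E}_1(u,z) - (u,z)_{L^2}| \le (k+1)\mathcal{E}_1(u,u)^{1/2}\mathcal{E}_1(z,z)^{1/2}$ by the sector condition, hence it is weakly continuous and $\mathcal{E}(u,\phi(v_n)) \to \mathcal{E}(u,\bar v)$ along the subsequence. Combined with $|\mathcal{E}(u,\phi(v_n))| \le c\|\phi(v_n)\|_\infty \le c\,C_\phi$, this yields $|\mathcal{E}(u,\bar v)| \le c\,C_\phi$, and rescaling gives $|\mathcal{E}(u,v)| \le c\,C_\phi\|v\|_\infty$ for every $v \in \mathcal{F}_b$, i.e. inequality (\ref{ineq}) (with the constant enlarged by the factor $C_\phi = \|\phi\|_\infty$). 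This follows the scheme of \cite{Fu99}, Lemma 6.1, the only genuinely new ingredient being the weak-compactness identification of the limit, which is forced on us by allowing a general bounded $\phi$ rather than a true unit contraction.
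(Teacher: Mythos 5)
Your proof is correct and follows essentially the same route as the paper's: approximate $v/\|v\|_\infty$ by a sequence in $\mathcal{F}_0$, apply $\phi$ to keep the sup-norms under control, and use the $\mathcal{E}_1^{1/2}$-boundedness of $(\phi(v_n))_n$ guaranteed by condition $(\mathcal{S})$ to pass to the limit in $\mathcal{E}(u,\cdot)$. The only (minor) difference is that the paper invokes the Banach--Saks theorem and strong $\mathcal{E}_1^{1/2}$-convergence of Ces\`aro means where you extract a weakly convergent subsequence and use weak continuity of the bounded linear functional $\mathcal{E}(u,\cdot)$; your explicit identification of the weak limit with $v$ via a.e.\ convergence and uniqueness of weak $L^2$-limits is a step the paper leaves implicit.
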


\begin{proof}
Let $v \in \mathcal{F}$ and assume that $\|v\|_\infty \leq 1$.
If $(v_n)_{n\geq 1} \subset \mathcal{F}_0$ is $\mathcal{E}_1^{1/2}$-convergent to $v$, 
then from the boundedness condition on $(\phi(v_n))_n$ and by Banach-Sacks theorem, there exists a subsequence $(\phi(v_{n_k}))_{n_k}$ whose Cesaro means $\frac{i=1}{k} \sum \limits_1^k\phi(v_{n_i})$ converges to $\phi(v)=v$ w.r.t. $\mathcal{E}_1^{1/2}$.
Therefore $|\mathcal{E}(u,v)| = \lim\limits_{k} |\mathcal{E}(u,\frac{1}{k} \sum \limits_{i=1}^k\phi(v_{n_i}))| \leq c \|\phi \|_\infty$. 
\end{proof}

Regarding inequality (\ref{lineq}), we have:

\begin{lem} \label{lem 5.8}
Let $u \in \mathcal{F}$ and $\mathcal{F}_0$ satisfy condition ($\mathcal{S}$) 
such that inequality (\ref{lineq}) holds for $\mathcal{F}_{b, F_n}$ replaced by $\mathcal{F}_{b, F_n} \cap \mathcal{F}_0$.
In addition, assume that $\mathcal{F}_0$ is an algebra and that for each $n \geq 1$ 
there exists $\psi_n \in \mathcal{F}_0 \cap \mathop{\bigcup}\limits_k \mathcal{F}_{b, F_k}$ 
such that $\psi_n = 1$ on $F_n$.
Then inequality (\ref{lineq}) holds for all $n\geq 1$ and $v \in \mathcal{F}_{b, F_n}$, 
with possible different constants $c_n$.
\end{lem}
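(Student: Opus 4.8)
The plan is to imitate the Banach--Saks device already used in the proof of the preceding lemma, but to insert the cut-off function $\psi_n$ in order to keep the approximants localized inside the nest. By bilinearity it suffices to treat $v \in \mathcal{F}_{b, F_n}$ with $\|v\|_\infty \leq 1$; the general case follows after rescaling by $\|v\|_\infty$. Fix such a $v$ and choose an index $k_n$ with $\psi_n \in \mathcal{F}_0 \cap \mathcal{F}_{b, F_{k_n}}$ and $\psi_n = 1$ on $F_n$. Since $v$ vanishes off $F_n$ while $\psi_n \equiv 1$ there, we have $\psi_n v = v$, and since $\|v\|_\infty \leq 1$ we also have $\phi(v) = v$ by the second property in condition $(\mathcal{S})$.

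First I would pick a sequence $(v_k)_k \subset \mathcal{F}_0$ with $v_k \to v$ in $\mathcal{E}_1^{1/2}$, available because $\mathcal{F}_0$ is $\mathcal{E}_1^{1/2}$-dense, and set $g_k := \psi_n\, \phi(v_k)$. Since $\mathcal{F}_0$ is an algebra and $\phi(\mathcal{F}_0) \subset \mathcal{F}_0$, each $g_k$ lies in $\mathcal{F}_0$; as $\psi_n$ is supported in $F_{k_n}$, in fact $g_k \in \mathcal{F}_{b, F_{k_n}} \cap \mathcal{F}_0$, with the uniform bound $\|g_k\|_\infty \leq \|\psi_n\|_\infty \|\phi\|_\infty$. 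The next step is to verify that $(g_k)_k$ is bounded for $\mathcal{E}_1^{1/2}$: by condition $(\mathcal{S})$ the family $(\phi(v_k))_k$ is $\mathcal{E}_1^{1/2}$-bounded and uniformly bounded in $L^\infty$, so the Leibniz-type product estimate for bounded elements of a (semi-)Dirichlet form, $\widetilde{\mathcal{E}}(fg,fg)^{1/2} \leq \|f\|_\infty \widetilde{\mathcal{E}}(g,g)^{1/2} + \|g\|_\infty \widetilde{\mathcal{E}}(f,f)^{1/2}$, applied with $f = \psi_n$ and $g = \phi(v_k)$, together with the equivalence of $\mathcal{E}_1^{1/2}$ and $\widetilde{\mathcal{E}}_1^{1/2}$ coming from the sector condition, yields $\sup_k \mathcal{E}_1(g_k, g_k) < \infty$.

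I would then identify the Banach--Saks limit. Since $v_k \to v$ in $L^2(\mu)$ and $\phi$ is bounded and continuous, dominated convergence gives $\phi(v_k) \to \phi(v)$ in $L^2(\mu)$, whence $g_k = \psi_n \phi(v_k) \to \psi_n \phi(v) = v$ in $L^2(\mu)$. Because the embedding $(\mathcal{F}, \mathcal{E}_1^{1/2}) \hookrightarrow L^2(\mu)$ is continuous, the $L^2$-limit of any weakly $\mathcal{E}_1^{1/2}$-convergent subsequence of $(g_k)_k$ must be $v$; combined with the $\mathcal{E}_1^{1/2}$-boundedness from the previous step and the Banach--Saks theorem, I obtain a subsequence whose Ces\`aro means $h_k := \frac{1}{k}\sum_{i=1}^k g_{k_i}$ converge to $v$ in $\mathcal{E}_1^{1/2}$. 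Each $h_k$ still lies in $\mathcal{F}_{b, F_{k_n}} \cap \mathcal{F}_0$, being a convex combination of elements of this set, and satisfies $\|h_k\|_\infty \leq \|\psi_n\|_\infty \|\phi\|_\infty$. Applying the localized hypothesis to $h_k$ gives $|\mathcal{E}(u, h_k)| \leq c_{k_n}\|h_k\|_\infty \leq c_{k_n}\|\psi_n\|_\infty\|\phi\|_\infty$, and since $u \in \mathcal{F}$ is fixed the functional $\mathcal{E}(u, \cdot)$ is $\mathcal{E}_1^{1/2}$-continuous by the sector condition, so letting $k \to \infty$ yields $|\mathcal{E}(u, v)| \leq c_{k_n}\|\psi_n\|_\infty\|\phi\|_\infty$. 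After rescaling, inequality (\ref{lineq}) holds on $\mathcal{F}_{b, F_n}$ with the new constant $c_n' := c_{k_n}\|\psi_n\|_\infty\|\phi\|_\infty$.

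The main obstacle I anticipate is the simultaneous control of localization and energy. A naive attempt to multiply an approximating sequence by $\psi_n$ and pass to the limit directly fails, because $v_k \to v$ only in $\mathcal{E}_1^{1/2}$ while $\|v_k - v\|_\infty$ need not tend to $0$, so the product estimate does not produce strong convergence of $\psi_n v_k$ to $\psi_n v$. This is precisely why the contraction $\phi$ and the Ces\`aro/Banach--Saks mechanism of the preceding lemma are indispensable: they convert the mere $\mathcal{E}_1^{1/2}$-boundedness of $(\psi_n \phi(v_k))_k$ into strong convergence of averages, while the algebra property and the support of $\psi_n$ keep every approximant inside $\mathcal{F}_{b, F_{k_n}} \cap \mathcal{F}_0$ so that the localized hypothesis may be invoked. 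Checking that the product estimate and the sector-condition norm equivalence remain available for a semi-Dirichlet form is the one technical point that must be handled with care.
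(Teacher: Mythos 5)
Your proposal is correct and follows essentially the same route as the paper: approximate $v$ by elements of $\mathcal{F}_0$, form the localized cut-offs $\psi_n\,\phi(v_k)$, bound their $\mathcal{E}_1^{1/2}$-norms via the Leibniz-type product estimate, and pass to the limit through Ces\`aro means by Banach--Saks. Your write-up is in fact somewhat more careful than the paper's, in that you explicitly identify the Banach--Saks limit as $v$ via the $L^2(\mu)$-convergence and record the uniform $L^\infty$ bound on the approximants.
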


\begin{proof}
Fix $n \geq 1$, $v \in \mathcal{F}_{F_n}$ s.t. $\|v\|_\infty \leq 1$, 
and let $k(n) \geq 1$ s.t. $\psi_n \in \mathcal{F}_{b,F_{k(n)}}$.
Take $(v_m)_m \subset \mathcal{F}_0$ which is $\mathcal{E}_1^{1/2}$-convergent to $v$.
Then 
$$
\mathcal{E}_1(\phi(v_m) \psi_n, \phi(v_m) \psi_n) \leq \mathcal{E}_1(\phi(v_m),\phi(v_m))^{1/2} \|\psi_n\|_\infty^{1/2} + \mathcal{E}_1(\psi_n,\psi_n)^{1/2} \|\phi(v_m)\|_\infty^{1/2}
$$ 
which means that $(\phi(v_m) \psi_n)_m$ is $\mathcal{E}_1^{1/2}$-bounded 
and employing once again Banach-Sacks theorem just like we did in the proof of the previous lemma, we get

$$|\mathcal{E}(u,v)| \leq c_{k(n)} \|\phi \psi_n\|_\infty,$$
where the right-hand term does not depend on $v$ 
(in fact it is the new constant replacing $c_n$).

\end{proof}

Candidates for $\mathcal{F}_0$ satisfying the assumption of Lemma \ref{lem 5.8} are the {\it special standard cores} in the sense of \cite{FuOsTa11}; see also \cite{Fu99}, page 27.

\vspace{0.3cm}
\noindent
{\bf Aknowledgements.} 
The first named author acknowledges support from the Romanian National Authority for Scientific Research, 
project number PN-III-P4-ID-PCE-2016-0372. 
The second-named author acknowledges support from the Romanian National Authority for Scientific Research, CNCS-UEFISCDI, project number PN-II-RU-TE-2014-4-0007.

\end{document}